\newcommand{\V}{\mathbb{V}}
\newtheorem{theorem}{{\sc Theorem}}[section]
\newtheorem{cor}[theorem]{{\sc Corollary}}
\newtheorem{lemma}[theorem]{{\sc Lemma}}
\newtheorem{prop}[theorem]{{\sc Proposition}}
\theoremstyle{remark}
\newtheorem{remark}[theorem]{{\sc Remark}}
\theoremstyle{definition}
\newcommand{\R}{\mathbb{R} }
\newcommand{\N}{\mathbb{N} }
\newcommand{\B}{\mathcal{B}}
\newcommand{\F}{\mathcal{F}}
\newcommand{\G}{\mathcal{G}}
\newcommand{\D}{\mathcal{D}}
\newcommand{\Tr}{\textnormal{Tr}}
\newcommand{\Prob}{\mathbb{P}}
\newcommand{\E}{\mathbb{E}}
\providecommand{\abs}[1]{\lvert #1\rvert}
\providecommand{\babs}[1]{\bigl\lvert #1\bigr\rvert}
\providecommand{\Babs}[1]{\Bigl\lvert #1\Bigr\rvert}
\providecommand{\fnorm}[1]{\lVert #1\rVert_\infty}
\providecommand{\Opnorm}[1]{\lVert #1\rVert_{\op}}
\providecommand{\Enorm}[1]{\lVert #1\rVert_2}
\providecommand{\HSnorm}[1]{\lVert #1\rVert_{\HS}}
\DeclareMathOperator{\diag}{diag}
\DeclareMathOperator{\Var}{Var}
\DeclareMathOperator{\Cov}{Cov}
\DeclareMathOperator{\op}{op}
\DeclareMathOperator{\Hess}{Hess}
\DeclareMathOperator{\HS}{H.S.}
\renewcommand{\phi}{\varphi}
\renewcommand{\epsilon}{\varepsilon}
\renewcommand{\rho}{\varrho}
\begin{document}
\title[Quantitative de Jong]{Quantitative de Jong theorems \\ in any dimension}
\author{Christian D\"obler \and Giovanni Peccati}
\thanks{\noindent Universit\'{e} du Luxembourg, Unit\'{e} de Recherche en Math\'{e}matiques \\
E-mails: christian.doebler@uni.lu, giovanni.peccati@uni.lu\\
{\it Keywords: Quantitative CLTs; de Jong's Theorem; Exchangeable pairs; Hoeffding decomposition; Degenerate $U$-statistics; Multidimensional convergence; Stein's method} }
\begin{abstract}  We develop a new quantitative approach to a multidimensional version of the well-known {\it de Jong's central limit theorem} under optimal conditions, stating that a sequence of Hoeffding degenerate $U$-statistics whose fourth cumulants converge to zero satisfies a CLT, as soon as a Lindeberg-Feller type condition is verified. Our approach allows one to deduce explicit (and presumably optimal) Wasserstein bounds in the case of general $U$-statistics of 
arbitrary order $d\geq1$. One of our main findings is that, for vectors of $U$-statistics satisfying de Jong' s conditions and whose covariances admit a limit, componentwise convergence systematically implies joint convergence to Gaussian: this is the first instance in which such a phenomenon is described outside the frameworks of homogeneous chaoses and of diffusive Markov semigroups.
\end{abstract}

\maketitle

\section{Introduction, framework and main results}\label{intro}

\subsection{Overview}
Let $\{ W_n : n\geq 1\}$ be a sequence of unit variance $U$-statistics of order $d\geq 1$ (not necessarily symmetric) with underlying independent data $X_1,\dotsc,X_n$, that are degenerate in the sense of Hoeffding (see Section \ref{intro1} for formal definitions) and have a finite fourth moment. In the landmark paper \cite{deJo90} (see also \cite{deJo89}), P. de Jong proved the following remarkable fact, valid as $n\to \infty$: if $\E[W_n^4]\to 3$ and a Lindeberg-Feller-type condition is verified, then $W_n$ converges in distribution towards a standard Gaussian random variable $Z$ (note that $3 = \E [Z^4]$). This surprising result represents a drastic simplification of the method of moments and cumulants (see e.g. \cite[Section A.3]{NouPecbook}), which should be  contrasted with the `typical' non-central asymptotic behaviour of degenerate $U$-statistics of a fixed order $d\geq2$ and with a fixed kernel --- see e.g. \cite{Greg77}, \cite{ser-book}, \cite{RubVi80}, \cite{DynMan83} or \cite[Ch. 11]{J-book} 
; it also provides a general explanation of the ubiquitous emergence of the Gaussian distribution in geometric models where counting statistics can be naturally represented in terms of degenerate $U$-statistics, see e.g. \cite{JJ, RePe-book, Pen-book}.  

\medskip

One should notice that de Jong's central limit theorem (CLT) is a one dimensional {\it qualitative} statement: in particular, it does not provide any meaningful information about the rate of convergence of the law of $W_n$ towards the target Gaussian distribution. Our aim in this paper is to use {\it Stein's method of exchangeable pairs}, as originally developed in Stein's monograph \cite{St86}, in order to prove new quantitative and multidimensional versions of de Jong's central limit theorem under minimal conditions, in the setting of {\it degenerate and non-symmetric} $U$-statistics that do not necessarily have the form of homogeneous sums. In particular, we are interested in characterizing the joint convergence of those vectors of degenerate $U$-statistics, whose components verify one-dimensional CLTs.

\medskip

One of the main motivations for pursuing our goal is that the findings of \cite{deJo89} have anticipated a modern and very fruitful direction of research, where tools of infinite-dimensional calculus are used in order to deduce {\it fourth moment theorems} in the spirit of de Jong (but, crucially, without the use of Lindeberg-Feller-type conditions) for random variables belonging to {\it the homogeneous chaos} of some general random field. The best-known results in this area gravitate around the main discovery of \cite{NuaPec05} (as well as its multidimensional extension \cite{PeTu}), where it is proved that a sequence of normalized random variables $\{Y_n : n\geq 1\}$, belonging to a fixed {\it Wiener chaos} of a Gaussian field, verifies a central limit theorem (CLT) if and only if $\E[Y_n^4]\to 3$. The combined use of {\it Malliavin calculus} and {\it Stein's method} has consequently allowed one to deduce strong quantitative versions with explicit Berry-Esseen bounds of these results (see \cite{NouPec09a,NouPecbook}), and it is therefore a natural question to ask whether the original CLT by de Jong can be endowed with explicit bounds, that are comparable with those available in a Gaussian setting. 

\medskip

The reader can consult the constantly updated webpage
$$
\mbox{\tt https://sites.google.com/site/malliavinstein/home}
$$ 
for an overview of the  emerging domain of research connected to \cite{NouPec09a,NouPecbook, NuaPec05, PeTu}. Among the many notable ramifications of the results of \cite{NouPec09a, NuaPec05} to which our findings should be compared, we quote: \cite{KRT, NPR-ejp, PrTo} for results involving homogeneous sums in the {\it Rademacher} (also called {\it Walsh}) chaos, \cite{EiThae14, LRP1, LRP2, PSTU, PZ-ejp, ReSch, Sch-Poisson} for the analysis of Poissonized $U$-statistics living in the Wiener chaos associated with a Poisson measure, \cite{Ariz, BP-free, KNPS, NPSP} for fourth moment theorems involving homogeneous sums in a non-commutative setting, and \cite{ACP, CNPP, ledoux-aop} for results in the setting of chaotic random variables associated with a diffusive Markov semigroup. Central and non-central quantitative versions of de Jong's results in the case of fully symmetric Poissonized $U$-statistics can be found in \cite{EiThae14, FT, PTh}.

\medskip

Two sets of references are particularly relevant for the present work:

\medskip

\begin{enumerate}

\item[(a)] In reference \cite{NPR-aop} (see also \cite{PZ-ber}) de Jong's CLT in the special case of {\it homogeneous sums} was studied in the framework of the powerful theory of universality and influence functions initiated in \cite{MOO}. In particular, explicit 
bounds 
were obtained for vectors of homogeneous sums satisfying a CLT. 

\smallskip

\item[(b)] In the already quoted reference \cite{PeTu}, the following striking phenomenon was discovered. For $r\geq 2$, let $Y^{m} = (Y^{m}_1,...,Y^{m}_r)$, $m\geq 1$, be a sequence of random vectors whose components live in a fixed Wiener chaos, and assume that the covariance matrix of $Y^{m}$ converges to some ${\bf \Sigma} \geq 0$ and that each component $Y^m_i$ verifies a CLT; then, $Y^m$ converges in distribution towards a Gaussian vector with covariance ${\bf \Sigma}$, that is: {\it for vectors of random variables living in a fixed chaos, componentwise convergence to Gaussian, systematically implies joint convergence}. As explained e.g. in \cite[Chapter 6]{NouPecbook}, such a phenomenon serves as a key stepping stone in order to deduce Gaussian approximations for general functionals of Gaussian fields. Since then, this result has been extended (at least, partially), to the framework of the homogeneous chaos associated with a Poisson measure (see \cite{BP-geo, PZ-ejp}), to general vectors of homogeneous sums (see \cite[Section 7]{NPR-aop} and \cite{NPPSi}), to the free probability setting (see \cite{NPSP}), as well as to the framework of Markov chaoses (see \cite{CNPP}).

\end{enumerate}

\medskip

The achievements of the present paper are twofolds: 

\medskip

\begin{enumerate}

\item[(1)] On the one hand, we will obtain a general quantitative version of the one-dimensional de Jong CLT, displaying explicit bounds on the 1-Wasserstein distance. As anticipated, we will do that in the full general setting of degenerate $U$-statistics that do not necessarily have the form of homogeneous sums, and that are not necessarily symmetric. In particular, this extends the CLTs for homogeneous sums proved in \cite{NPR-aop}, as well as the results for Poissonized and symmetric $U$-statistics proved in \cite{EiThae14, LRP1}.

\medskip

\item[(2)] On the other hand, we will deduce (quantitative) multidimensional versions of de Jong theorems, showing that the crucial phenomenon observed in \cite{PeTu} (see the discussion at Point (b) above) basically extends to the framework of degenerate $U$-statistics. Our main theorems on the matter show that the case of $U$-statistics of the same order must take into account at least one cumulant  of order four --- thus echoing recent results from \cite{CNPP}. Our forthcoming Theorem \ref{t:mq} marks the first instance in which the phenomenon observed in \cite{PeTu} is described in full generality, outside the frameworks of homogeneous chaoses, and of the chaoses associated with a diffusive Markov semigroup.

\end{enumerate}

\medskip

We will now describe our setting and our main results in more detail. 

\subsection{Main results, I: univariate normal approximations}\label{intro1}
Let us fix the following setup and notation, which we essentially adopt from \cite{deJo90}.  We refer the reader to the classical references \cite{Hoeffding, KB-book, KR, ser-book, vitale}, as well as to the more recent works \cite{ElDP, ElDPP, LRP, P-aop}, for an introduction to degenerate $U$-statistics, Hoeffding decompositions and their use in stochastic analysis. 

\smallskip

 Let $(\Omega,\F,\Prob)$ be a probability space and for an integer $n\geq1$ let $X_1,\dotsc,X_n$ be independent random elements on this space assuming values in the respective 
measurable spaces $(E_1,\mathcal{E}_1),\dotsc,(E_n,\mathcal{E}_n)$. Further, assume that 
\begin{equation*}
 f:\prod_{j=1}^n E_j\rightarrow\R\quad\text{is}\quad\bigotimes_{j=1}^n\mathcal{E}_j-\B(\R)\text{ - measurable}
\end{equation*}
and that 
\begin{equation*}
W:=f(X_1,\dotsc,X_n) \in L^4(\Prob)
\end{equation*}
satisfies 
\begin{equation}\label{stw}
 \E[W]=0\quad\text{and}\quad \E[W^2]=1\,.
\end{equation}
We write 
\begin{equation*}
 [n]:=\{1,\dotsc,n\}
\end{equation*}
and for $J\subseteq[n]$ we also write
\begin{equation*}
 \F_J:=\sigma(X_j,\,j\in J)\,.
\end{equation*}
We write 
\begin{equation}\label{hoeffding}
 W=\sum_{J\subseteq[n]}W_J
\end{equation}
to indicate the {\it Hoeffding decomposition} of $W$. Note that this means that, for each $J\subseteq[n]$,  $W_J$ is $\F_J$-measurable and that 
\begin{equation}\label{hddef}
 \E[W_J\,|\,\F_K]=0\,,
\end{equation}
whenever $J\nsubseteq K$. It is well-known that $W$ admits a Hoeffding decomposition of the type \eqref{hoeffding}, as long as $W\in L^1(\Prob)$ and that 
it is almost surely unique and given by 
\begin{equation}\label{hdw}
W_J=\sum_{L\subseteq J}(-1)^{\abs{J}-\abs{L}}\E\bigl[W\,\bigl|\,\F_L\bigr]\,,\quad{J\subseteq[n]}\,.
\end{equation}

We can thus write 
\begin{equation}\label{kernelsfj}
 W_J=f_J(X_j,\,j\in J)
\end{equation}
for some measurable functions
\begin{equation*}
 f_J:\prod_{j\in J}E_j\rightarrow\R\,,\quad J\subseteq[n]\,.
\end{equation*}
Let us also define
\begin{equation*}
 \sigma_J^2:=\Var(W_J)\,,\quad J\subseteq[n]\,.
\end{equation*}

One major assumption in what follows will be that, for some fixed integer $d\in[n]$, $W$ is a \textit{ degenerate $U$-statistic of order $d$} (or $d$-degenerate $U$-statistic), i.e. that the Hoeffding decomposition \eqref{hoeffding} has the form 
\begin{equation}\label{dhom}
 W=\sum_{J\in\D_d}W_J\,,
\end{equation}
where 
\[\D_d:=\{J\subseteq[n]\,:\, \abs{J}=d\}   \] 
denotes the collection of all $\binom{n}{d}$ $d$-subsets of $[n]$.
Equivalently, we have $W_K=0$ whenever $K\subseteq[n]$ is such that $\abs{K}\not= d$. Hence, we have 
\begin{equation*}
 W=f(X_1,\dotsc,X_n)=\sum_{J\in\D_d} f_J(X_j,\,j\in J)\,.
 \end{equation*}

 The next lemma lists important properties of the Hoeffding decomposition of $W$ which will be used without further mention.
 \begin{lemma}\label{hdle}
Let the above notation and definitions prevail. Then, one has the following properties: 
 \begin{enumerate}[{\normalfont 1)}]
  \item Whenever $J,K_1,\dotsc,K_s\in\D_d$ are such that \label{(4)}
\begin{equation*}
 J\nsubseteq\bigcup_{i=1}^s K_i=:K
\end{equation*}
and $g(W_{K_1},\dotsc,W_{K_s})$ is square-integrable, then 
\begin{equation*}
 \E\bigl[W_Jg\bigl(W_{K_1},\dotsc,W_{K_s}\bigr)\bigr]=0\,.
\end{equation*}
In particular, $W_J$, $J\in\D_d$, are uncorrelated.
 
 \item For all $J,K\in\D_d$ such that $J\not=K$ we have $\E[W_J\,|\,\F_K]=0$.\label{(3)}
 \item For all $J,K\in\D_d$ we have $\E[W_J]=0$ and $\E[W_JW_K]=\delta_{J,K}\sigma_J^2$.\label{(1)}
 \item $\displaystyle\sum_{J\in\D_d}\sigma_J^2=1$.\label{(2)}

\item Whenever $J_1,\dotsc,J_r,K_1,\dotsc,K_s\in\D_d$ are such that \label{(5)}
 \begin{equation*}
  \Bigl(\bigcup_{l=1}^r J_l\Bigr)\cap \Bigl(\bigcup_{i=1}^s K_i\Bigr)=\emptyset\,,
 \end{equation*}
then the families $\{W_{J_l}\,:\,l=1,\dotsc,r\}$ and $\{W_{K_i}\,:\,i=1,\dotsc,s\}$ are independent, i.e. the summands $W_J$, $J\in\D_d$, are \textit{dissociated} as defined in {\rm \cite{GinSib}.}
\end{enumerate}
\end{lemma}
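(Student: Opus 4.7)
My plan is to deduce all five items directly from the defining property \eqref{hddef} of the Hoeffding decomposition, combined with the elementary combinatorial fact that for $J,K\in\D_d$ the inclusion $J\subseteq K$ is equivalent to $J=K$. This single remark is what turns the ``generic'' Hoeffding orthogonality into the sharper orthogonality and uncorrelatedness statements of items 1)--4); everything else is either a direct specialization or a standard argument about independence of the underlying $X_i$'s.

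I would start with item 1). The crucial observation is that $g(W_{K_1},\dotsc,W_{K_s})$ is measurable with respect to $\F_K$, since each $W_{K_i}$ is $\F_{K_i}$-measurable and $K_i\subseteq K$ for every $i$. Conditioning on $\F_K$ and invoking \eqref{hddef} together with the hypothesis $J\nsubseteq K$ then kills the expectation. The uncorrelatedness of the $W_J$, $J\in\D_d$, is the special case $s=1$, $K_1=K$, $g(x)=x$, once one notices that $J\neq K$ inside $\D_d$ forces $J\nsubseteq K$ by the equal-cardinality remark. Item 2) is the same special case phrased as a conditional expectation. Item 3) would follow by applying \eqref{hddef} with $K=\emptyset$ (legitimate because $|J|=d\geq 1$) to obtain $\E[W_J]=0$, and by applying item 1) with $s=1$ to obtain the orthogonality $\E[W_JW_K]=\delta_{J,K}\sigma_J^2$. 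Item 4) is then an immediate consequence: the $W_J$ are mean zero and pairwise uncorrelated, so $\Var(W)=\sum_{J\in\D_d}\sigma_J^2$, while the normalization in \eqref{stw} gives $\Var(W)=1$.

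For item 5), I would set $I:=\bigcup_{l=1}^r J_l$ and $I':=\bigcup_{i=1}^s K_i$, and use the representation \eqref{kernelsfj} to note that each $W_{J_l}$ is a measurable function of $(X_j)_{j\in I}$, whereas each $W_{K_i}$ is a measurable function of $(X_j)_{j\in I'}$. The disjointness $I\cap I'=\emptyset$ and the independence of $X_1,\dotsc,X_n$ force $\F_I$ and $\F_{I'}$ to be independent, which yields independence of the two families. No step in the lemma poses any real difficulty; the only mildly delicate point is tracking the $\F_K$-measurability of the test function in item 1), which is exactly what lets the conditioning argument go through cleanly and unlocks the three subsequent items.
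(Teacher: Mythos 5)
Your proposal is correct and follows essentially the same route as the paper: item 1) via the $\F_K$-measurability of $g(W_{K_1},\dotsc,W_{K_s})$ and conditioning with \eqref{hddef}, items 2)--4) as specializations combined with the normalization \eqref{stw}, and item 5) from the independence of the $X_j$'s via the disjoint-block argument (the paper cites this as the ``disjoint block theorem''). Your explicit remark that $J\neq K$ with $\abs{J}=\abs{K}=d$ forces $J\nsubseteq K$ is the same equal-cardinality observation the paper uses implicitly, so there is nothing to add.
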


\begin{proof}
Point 1) is a consequence of the degeneracy property \eqref{hddef} because, 
\begin{equation*}
 \E[W_J\,|\,\F_K]=0\,,
\end{equation*}
as $J\not\subseteq K$ and, hence, by conditioning we have
\begin{align*}
 \E\bigl[W_Jg\bigl(W_{K_1},\dotsc,W_{K_s}\bigr)\bigr]&=\E\bigl[g\bigl(W_{K_1},\dotsc,W_{K_s}\bigr)\E[W_J\,|\,\F_K]\bigr]=0\,.
\end{align*}
Now, Point 2) follows since it is a special case of Point 1) and also Point 3) and Point 4) are immediately implied by Point 1), in view of assumption \eqref{stw}. Finally, Point 5) follows from independence as well as the disjoint block theorem.\\
\end{proof}

Let us furthermore define the quantity
\begin{equation}\label{rhodef}
 \rho^2:=\rho_n^2:=\max_{1\leq i\leq n}\sum_{\substack{K\in\D_d:\\ i\in K}}\sigma_K^2\,.
\end{equation} 

The next result corresponds to de Jong's celebrated (qualitative) CLT discussed in Section 1.1.

\begin{theorem}[See \cite{deJo90}]\label{t:ordj} Fix $d\geq 1$, and let $\{n_m : m\geq 1\}$ be a sequence of integers diverging to infinity. Let $\{W_m : m\geq 1 \}$ be a sequence of unit variance degenerate $U$-statistics of order $d$, such that each $W_m$ is a function of the vector of independent variables $(X_1^{(m)},...,X_{n_m}^{(m)})$. 
Then, as $m\to \infty$, if $\E[ W_m^4]\to 3$ and $\rho_{n_m}^2\to 0$, one has that $W_m$ converges in distribution towards a standard Gaussian random variable.
\end{theorem}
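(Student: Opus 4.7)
The plan is to deduce Theorem \ref{t:ordj} as a corollary of a quantitative Wasserstein-distance estimate obtained via Stein's method of exchangeable pairs, which is the approach highlighted throughout the paper. Fix $m$ and abbreviate $W = W_m$, $n = n_m$, with Hoeffding decomposition $W = \sum_{J \in \D_d} W_J$. Construct an exchangeable pair $(W, W')$ as follows: let $(X_1^*, \dots, X_n^*)$ be an independent copy of $(X_1, \dots, X_n)$ and let $I$ be uniform on $[n]$, both independent of everything else; define
\begin{equation*}
W' := f(X_1, \dots, X_{I-1}, X_I^*, X_{I+1}, \dots, X_n).
\end{equation*}
Writing $W_J^{(i)}$ for $W_J$ with $X_i$ replaced by $X_i^*$ (so $W_J^{(i)} = W_J$ when $i \notin J$), one has $W - W' = \sum_{J \ni I}(W_J - W_J^{(I)})$. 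Since $\E[W_J^{(i)} \mid \F] = \E[W_J \mid \F_{J\setminus\{i\}}] = 0$ for any $i \in J$ by the Hoeffding degeneracy \eqref{hddef}, it follows that
\begin{equation*}
\E[W - W' \mid \F] = \tfrac{1}{n}\sum_{i=1}^n \sum_{J \ni i} W_J = \tfrac{d}{n}\, W,
\end{equation*}
so the linear regression property holds with $\lambda := d/n$.

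Next, I invoke the standard $1$-Wasserstein bound for exchangeable pairs: for an absolute constant $C$,
\begin{equation*}
d_W(W, Z) \leq C\, \sqrt{\Var\!\left(\tfrac{1}{2\lambda}\E[(W-W')^2 \mid \F]\right)} + \tfrac{C}{\lambda}\, \E\bigl[|W-W'|^3\bigr],
\end{equation*}
where $Z$ is a standard Gaussian. The strategy is to show that both terms on the right vanish as $m \to \infty$. Expanding the conditional second moment and repeatedly using $\E[W_J^{(i)}\mid\F] = 0$ together with the dissociation properties of Lemma \ref{hdle} (Points 1 and 5), one obtains
\begin{equation*}
\E\bigl[(W-W')^2 \bigm| \F\bigr] = \tfrac{1}{n}\sum_{J, K \in \D_d} |J \cap K|\, W_J W_K + R,
\end{equation*}
where the remainder $R$ has $\E[R] = \lambda$ and consists of conditional expectations of the independently replaced blocks; after proper centring, $R$ contributes a deterministic constant plus lower-order fluctuations that can be handled by the same techniques as the main term.

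The crux, and main obstacle, is to bound the variance of $\tfrac{1}{2d}\sum_{J,K}|J \cap K|\, W_J W_K$ by a multiple of $\E[W^4] - 3$, which tends to zero by hypothesis. The key combinatorial step is a careful comparison of two expansions: on the one hand, expanding $W^4 = \sum_{J_1, J_2, J_3, J_4} W_{J_1}W_{J_2}W_{J_3}W_{J_4}$ and discarding (by Lemma \ref{hdle}, Point 1) all tuples $(J_1, J_2, J_3, J_4)$ in which some element of $\bigcup_l J_l$ occurs in exactly one block; on the other hand, expanding the variance itself using the uncorrelation $\E[W_JW_K] = \sigma_J^2 \delta_{JK}$. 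The three ``diagonal'' pairings $(J_1{=}J_2, J_3{=}J_4)$ and its two companions yield the constant $3\bigl(\sum_J \sigma_J^2\bigr)^2 = 3$, and the remaining tuples---indexed by genuinely overlapping configurations of $d$-subsets---dominate, up to bounded combinatorial coefficients, the variance of the main term, yielding the required estimate. For the third-moment contribution, I would apply H\"older's inequality to get $\E[|W-W'|^3] \leq \bigl(\E[(W-W')^4]\bigr)^{3/4}$, then expand $\bigl(\sum_{J\ni i}(W_J - W_J^{(i)})\bigr)^4$ and use dissociation to reduce to diagonal contributions, establishing a bound of the form $\E[(W-W')^4] \leq C\rho^2\,\E[W^4]$; since $\E[W^4]$ stays bounded while $\rho^2 \to 0$, the resulting contribution after division by $\lambda$ vanishes. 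The combinatorial identification of the $|J\cap K|$-weighted overlap variance with the fourth-moment deficit $\E[W^4]-3$ is the truly delicate step, as the generality of the kernels (non-symmetric, not necessarily of homogeneous-sum type) introduces significant bookkeeping compared to the Wiener-chaos prototype; once it is carried out, both terms in the Stein bound tend to zero and the qualitative convergence $W_m \Rightarrow Z$ follows.
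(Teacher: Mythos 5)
Your overall architecture coincides with the paper's: the coordinate--replacement exchangeable pair, the exact linear regression $\E[W'-W\mid X]=-\frac{d}{n}W$ (your computation of this is correct and matches Lemma \ref{linreg}), and a Wasserstein bound of the type of Theorem \ref{1dimplugin}. However, both vanishing statements that your plan rests on are defective, and one is false as stated. First, the variance step. You propose to bound $\Var\bigl(\frac{1}{2\lambda}\E[(W-W')^2\mid\F]\bigr)$ \emph{by a multiple of} $\E[W^4]-3$. This is impossible: the left-hand side is nonnegative, while the fourth cumulant of a degenerate $U$-statistic can be strictly negative (take $d=1$ and $W=\sum_i a_i\epsilon_i$ with Rademacher $\epsilon_i$ and $\sum_i a_i^2=1$, for which $\E[W^4]-3=-2\sum_i a_i^4<0$). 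The correct bound, the paper's Lemma \ref{1dterm1}, is $\E[W^4]-3+\kappa_d\rho_n^2$, and producing the additive $\rho_n^2$ term is where the entire difficulty lies: after the moment comparison one is left with the signed sum $S_0$ over bifold quadruples, whose sign is controlled only through the conditioning identity of Lemma \ref{le4dj} and a completion of squares (Proposition \ref{s0prop}, $S_0\geq-\tau$), followed by the combinatorial ``shadow'' estimate $\tau\leq C_d\rho_n^2$ (Proposition \ref{taubound}). Your assertion that the non-diagonal tuples ``dominate, up to bounded combinatorial coefficients, the variance of the main term'' asserts away exactly this point: those tuples have uncontrolled signs and no such domination holds. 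Relatedly, for $d\geq2$ your remainder $R=\frac1n\sum_i\sum_{J,K\ni i}\E[W_JW_K\mid\F_{(J\cup K)\setminus\{i\}}]$ is \emph{not} ``a deterministic constant plus lower-order fluctuations'': already the terms $\E[W_J^2\mid\F_{J\setminus\{i\}}]$ fluctuate at the same order as your main term. The paper avoids this split altogether by computing the Hoeffding decomposition of the combined quantity, $\frac{n}{2d}\E[(W'-W)^2\mid X]=\sum_{\abs{M}\leq 2d-1}a_M U_M$ with $a_M=1-\frac{\abs{M}}{2d}\in[0,1]$ (Lemma \ref{condex}); the essential structural feature, invisible in your decomposition, is that the top layer $\abs{M}=2d$ cancels.

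Second, the cubic term. Your Lyapunov step $\E\abs{W-W'}^3\leq(\E[(W-W')^4])^{3/4}$ mismatches the normalization: since $\E[(W-W')^2]=2\lambda$, one has $\E[(W-W')^4]=O(\lambda)\cdot(\text{small})$ at best, so $\lambda^{-1}(\E[(W-W')^4])^{3/4}$ carries a divergent factor $\lambda^{-1/4}=(n/d)^{1/4}$; the paper instead uses the Cauchy--Schwarz split $\E\abs{W-W'}^3\leq(2\lambda)^{1/2}(\E[(W-W')^4])^{1/2}$ (see \eqref{t21}), in which the powers of $\lambda$ cancel exactly. Moreover, your proposed estimate $\E[(W-W')^4]\leq C\rho_n^2\,\E[W^4]$ is too weak even for that sharper split: in the symmetric case $\rho_n^2=d/n$ exactly, so it only yields $\frac{n}{4d}\E[(W-W')^4]\leq\frac{C}{4}\E[W^4]$, a bounded but non-vanishing error. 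What is needed, and what your sketch misses entirely, is the exact exchangeable-pair identity of Lemma \ref{exlemma}, $\frac{1}{4\lambda}\E[(W'-W)^4]=3\E\bigl[W^2\frac{1}{2\lambda}\E[(W'-W)^2\mid\G]\bigr]-\E[W^4]$, which together with the coefficients $a_M$ and the variance bound gives $\frac{n}{4d}\E[(W'-W)^4]\leq2(\E[W^4]-3)+3\kappa_d\rho_n^2\to0$ (Lemma \ref{remlemma}). In short: the skeleton is the paper's, but the two quantitative lemmas you take for granted are precisely the content of the proof, and repairing them requires the ingredients (Lemmas \ref{condex} and \ref{exlemma}, Propositions \ref{s0prop} and \ref{taubound}) that your proposal does not supply.
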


{Note that the condition $\lim_{m\to\infty}\rho_{n_m}^2=0$ guarantees that, as $m\to\infty$, the influence of each of the random variables $(X_1^{(m)},...,X_{n_m}^{(m)})$ on the total variance of $W_{n_m}$ is negligible.
In fact, in the case $d=1$ it reduces to the classical \textit{Lindeberg-Feller condition}
\begin{equation*}
 \lim_{m\to\infty}\max_{1\leq j\leq n_m}\sigma_{j}^2= 0\,,
\end{equation*}
 from the Lindeberg-Feller CLT (see e.g. Theorem 5.12 in \cite{Kal}). Here, we wrote $\sigma_j^2$ for $\sigma_{\{j\}}^2$.}

Our first main statement provides an explicit bound in the Wasserstein distance $d_{\rm Wass}$ for Theorem \ref{t:ordj}. We recall that, given two integrable random variables $X$ and $Y$, the Wasserstein distance between the distributions of $X$ and $Y$ is given by the quantity
$$
d_{\rm Wass}(X,Y) = \sup_{h\in {\rm Lip}(1)} \left| \E[h(X)] -\E[h(Y)] \right|, 
$$
where ${\rm Lip}(1)$ stands for the class of $1$-Lipschitz functions.

 \begin{theorem}\label{1dmt}
 As before, let $W\in L^4(\Prob)$ be a degenerate $U$-statistic of order $d$ such that \eqref{stw} is satisfied and let 
$Z\sim N(0,1)$ be a standard normal random variable. Then, it holds that 
 \begin{align*}
 d_{\rm Wass}(W,Z)&\leq \sqrt{\frac{2}{\pi}}\biggl(\E[W^4]-3+\kappa_d \rho_n^2\biggr)^{1/2}
 +\frac{2\sqrt{2}}{3} \Bigl(2\bigl(\E[W^4]-3\bigr)+3\kappa_d \rho_n^2\Bigr)^{1/2}\\
 &\leq \Bigl(\sqrt{\frac{2}{\pi}}+\frac{4}{3}\Bigr)\sqrt{\babs{\E[W^4]-3}}+\sqrt{\kappa_d}\Bigl(\sqrt{\frac{2}{\pi}}+ \frac{2\sqrt{2}}{\sqrt{3}}\Bigr)\rho_n\,.
  \end{align*}
where $\kappa_d$ is a finite constant which only depends on $d$.
\end{theorem}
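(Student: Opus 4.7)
The plan is to apply Stein's method of exchangeable pairs. I would construct the pair $(W,W')$ by drawing an index $I$ uniformly on $[n]$ independently of $(X_1,\dotsc,X_n)$, letting $X_I'$ be an independent copy of $X_I$, and setting
\[
W':=f(X_1,\dotsc,X_{I-1},X_I',X_{I+1},\dotsc,X_n).
\]
For $i\in[n]$, write $W^{(i)}$ for the version of $W$ in which $X_i$ has been replaced with $X_i'$, so that $W-W^{(i)}=\sum_{J\in\D_d,\,J\ni i}(W_J-W_J^{(i)})$. Since $X_i'$ is independent of $\F_{[n]}$, Lemma~\ref{hdle} gives $\E[W_J^{(i)}\mid\F_{[n]}]=0$ whenever $i\in J$; summing over $i$ yields the linear regression property
\[
\E\bigl[W'-W\bigm|\F_{[n]}\bigr]=-\frac{d}{n}\,W,
\]
so that $(W,W')$ is a $\lambda$-Stein pair with $\lambda:=d/n$.

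With $\lambda$ identified, a standard Wasserstein bound for exchangeable pairs in normal approximation (cf.\ \cite{St86, NouPecbook}) reduces the task to controlling the two quantities
\[
A:=\Var\!\bigl(\tfrac{1}{2\lambda}\E[(W'-W)^2\mid\F_{[n]}]\bigr)\quad\text{and}\quad B:=\E\abs{W'-W}^3,
\]
via $d_{\rm Wass}(W,Z)\leq \sqrt{2/\pi}\,\sqrt{A}+B/(3\lambda)$.

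The \emph{main obstacle} is the analysis of $A$. Using the $d$-degeneracy together with the independence of $X_i'$ to kill all cross terms between $W_J$ and $W_K^{(i)}$, one first derives
\[
\E\bigl[(W'-W)^2\bigm|\F_{[n]}\bigr]=\frac{1}{n}\sum_{i=1}^{n}\sum_{J,K\ni i}\Bigl(W_JW_K+\E\bigl[W_J^{(i)}W_K^{(i)}\bigm|\F_{[n]\setminus\{i\}}\bigr]\Bigr),
\]
whose mean equals $2\lambda$ by Lemma~\ref{hdle}. Expanding the square to compute the variance produces a quadruple sum over $(J_1,J_2,J_3,J_4)\in\D_d^4$; the dissociation and degeneracy properties in Lemma~\ref{hdle} ensure that only quadruples in which the four index sets form a connected intersection pattern (no $J_\ell$ being disjoint from the union of the others) contribute. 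This is precisely the diagrammatic structure underlying the expansion of $\E[W^4]$ in de Jong's original proof by the method of moments; identifying the surviving diagrams with the terms in
\[
\kappa_4(W)=\E[W^4]-3\Bigl(\sum_{J\in\D_d}\sigma_J^2\Bigr)^2=\E[W^4]-3
\]
exhibits the fourth cumulant, up to a ``diagonal'' remainder of degenerate (disconnected or partially collapsed) diagrams which is controlled pointwise by $\rho_n^2$ via the elementary combinatorial estimate $\#\{J\in\D_d:\{i,j\}\subseteq J\}\leq\binom{n-2}{d-2}$ and the definition \eqref{rhodef}. This yields $A\leq c_d\bigl((\E[W^4]-3)+\rho_n^2\bigr)$ for a constant $c_d$ depending only on $d$.

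The bound on the third absolute moment is more routine. Cauchy--Schwarz gives
\[
\E\abs{W'-W}^3\leq\sqrt{\E[(W'-W)^2]\,\E[(W'-W)^4]}=\sqrt{2\lambda\,\E[(W'-W)^4]},
\]
and the fourth moment of the increment is in turn expanded into a quadruple sum of the same type as above (without the variance projection), so that the same combinatorial reorganization together with $\sum_J\sigma_J^2=1$ delivers an estimate of the form $\E[(W'-W)^4]\leq c'_d\lambda\bigl(\abs{\E[W^4]-3}+\rho_n^2\bigr)$. Combining the two estimates, collecting all combinatorial factors into a single constant $\kappa_d$, and applying $\sqrt{a+b}\leq\sqrt{a}+\sqrt{b}$ to split the resulting square roots yields both forms of the bound asserted in Theorem~\ref{1dmt}.
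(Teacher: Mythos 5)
Your scaffolding coincides exactly with the paper's: the same exchangeable pair (resample one uniformly chosen coordinate), the same exact linear regression $\E[W'-W\,|\,X]=-\frac{d}{n}W$ proved via Hoeffding degeneracy, and the same plug-in bound of Theorem \ref{1dimplugin}. The genuine gap is in the two central estimates, which you assert rather than prove, and where the mechanism you sketch would in fact fail. For the variance term, after expanding into a quadruple sum over $(J_1,J_2,J_3,J_4)\in\D_d^4$, the problematic contribution is the sum $S_0$ over \emph{connected bifold} quadruples (those with $J\cap K=L\cap M=\emptyset$ but pairwise nonempty overlaps otherwise). Your claim that this remainder is ``controlled pointwise by $\rho_n^2$'' via the counting bound $\#\{J:\{i,j\}\subseteq J\}\leq\binom{n-2}{d-2}$ is false: $\abs{S_0}$ is \emph{not} $O(\rho_n^2)$ in general --- up to $O(\rho_n^2)$ corrections it is essentially the positive part of $\E[W^4]-3$ and can be of order one (the theorem must hold without assuming the fourth cumulant is small). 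What is true, and what the paper's proof requires, is only the one-sided bound $S_0\geq-\tau$ (Proposition \ref{s0prop}), obtained from a completion-of-squares positivity argument involving $\E\bigl[\bigl(\sum\E[W_JW_M\,|\,\F_{J\Delta M}]\bigr)^2\bigr]\geq0$, combined with the estimate $\tau\leq C_d\rho_n^2$ (Proposition \ref{taubound}); the latter is itself a substantial improvement of de Jong's Lemma B (which only gave order $\rho_n$) and needs the shadow/equivalence-class combinatorics with carefully ordered Cauchy--Schwarz steps, far beyond your single counting estimate. Relatedly, you implicitly bound individual terms $\E[W_{J_1}W_{J_2}W_{J_3}W_{J_4}]$ by $\sigma_{J_1}\sigma_{J_2}\sigma_{J_3}\sigma_{J_4}$; such bounds are unavailable without a fourth-versus-second moment (hypercontractivity) hypothesis, which the theorem deliberately avoids (cf.\ Remark \ref{rem1dmt}(b)) --- the paper obtains them only for the special bifold conditional structure via Lemma \ref{le3dj}.

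The same objection hits your treatment of $\E[(W'-W)^4]$: expanding it ``into a quadruple sum of the same type'' and invoking ``the same combinatorial reorganization'' would again require absolute control of connected quadruple sums sharing the resampled index, which is unavailable under the stated assumptions. The paper avoids this entirely through the algebraic identity of Lemma \ref{exlemma}, $\frac{1}{4\lambda}\E[(W'-W)^4]=3\E\bigl[W^2\frac{1}{2\lambda}\E[(W'-W)^2\,|\,\G]\bigr]-\E[W^4]$, valid for any exchangeable pair with exact linear regression, combined with the explicit Hoeffding decomposition of $\frac{n}{2d}\E[(W'-W)^2\,|\,X]$ with coefficients $a_M=1-\abs{M}/(2d)\in[0,1]$ (Lemma \ref{condex}); both error terms then reduce to the single estimate $\sum_{\abs{M}\leq2d-1}\Var(U_M)\leq\E[W^4]-3+\kappa_d\rho_n^2$ of Lemma \ref{varlemma}, yielding the \emph{signed} bounds (in particular establishing that $\E[W^4]-3+\kappa_d\rho_n^2\geq0$) and the exact constants of the statement. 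Until you supply the sign structure of $S_0$, a proof of the $\rho_n^2$-order bound on $\tau$, and a substitute for the fourth-moment identity, your two key inequalities remain assertions, not proofs.
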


Recall that a degenerate $U$-statistic $W$ of order $d$ as given by \eqref{dhom} is called \textit{symmetric}, if, additionally, the measurable spaces $(E_1,\mathcal{E}_1),\dotsc,(E_n,\mathcal{E}_n)$ all coincide, the random variables $X_1,\dotsc,X_n$ are i.i.d. and if there 
is a measurable kernel $g:E_1^d\rightarrow\R$ such that $f_J=g$ for all $J\in\D_d$. In this special situation, the relations
\begin{align*}
 1&=\Var(W)=\sum_{J\in\D_d}\E\bigl[g^2(X_j,j\in J)\bigr]=\binom{n}{d}\E\bigl[g^2(X_1,\dotsc,X_d)\bigr]\quad\text{and}\\
 \rho_n^2&=\sum_{\substack{J\in \D_d:\\ 1\in J}}\E\bigl[g^2(X_j,j\in J)\bigr]=\binom{n-1}{d-1}\E\bigl[g^2(X_1,\dotsc,X_d)\bigr]
\end{align*}
imply that 
\begin{equation*}
 \rho_n^2=\frac{d}{n}\,.
\end{equation*}
Hence, we arrive at the following corollary of Theorem \ref{1dmt}.

\begin{cor}\label{cor1d}
 Let $W\in L^4(\Prob)$ be a normalized, degenerate and symmetric $U$-statistic of order $d$ and let $Z\sim N(0,1)$ be a standard normal random variable. Then, 
  \begin{align*}
 d_{\rm Wass}(W,Z)
 &\leq \Bigl(\sqrt{\frac{2}{\pi}}+\frac{4}{3}\Bigr)\sqrt{\babs{\E[W^4]-3}}+\frac{\sqrt{d\kappa_d}}{\sqrt{n}}\Bigl(\sqrt{\frac{2}{\pi}}+ \frac{2\sqrt{2}}{\sqrt{3}}\Bigr)\,.
\end{align*}
In particular, under the assumptions of Theorem \ref{t:ordj}, a sequence $\{W_m : m\geq 1 \}$ of degenerate and symmetric $U$-statistics of a fixed order $d$ converges in distribution to $Z\sim N(0,1)$, whenever $\lim_{m\to\infty}\E[W_m^4]=3$. 
\end{cor}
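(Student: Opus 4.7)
The proof is essentially a direct application of Theorem \ref{1dmt} combined with the computation of $\rho_n^2$ carried out in the paragraph preceding the corollary. The plan is therefore to substitute the explicit value $\rho_n^2=d/n$, which follows from the symmetry assumption via the elementary variance identities $1=\binom{n}{d}\E[g^2]$ and $\rho_n^2=\binom{n-1}{d-1}\E[g^2]$ already recorded above, into the second (simpler) form of the Wasserstein bound in Theorem \ref{1dmt}. Since $\sqrt{\kappa_d}\,\rho_n=\sqrt{\kappa_d}\sqrt{d/n}=\sqrt{d\kappa_d}/\sqrt{n}$, the displayed inequality follows at once.

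For the convergence assertion, the only observation needed is that the Lindeberg-Feller-type condition $\rho_{n_m}^2\to 0$ appearing in Theorem \ref{t:ordj} is \emph{automatic} in the symmetric setting of a fixed order $d$: since $\rho_{n_m}^2=d/n_m$ and $n_m\to\infty$, this quantity tends to zero without any extra assumption on the kernel $g$. Consequently, under the sole hypothesis $\E[W_m^4]\to 3$, both summands in the Wasserstein bound vanish as $m\to\infty$, and convergence in the $1$-Wasserstein distance implies convergence in distribution. I do not anticipate any substantive obstacle here; the content of the corollary is a repackaging of Theorem \ref{1dmt} in the symmetric case, whose main interest lies in making explicit that in this well-studied regime the Lindeberg-Feller assumption of de Jong's original statement is redundant, leaving the fourth moment condition as the sole requirement for a CLT.
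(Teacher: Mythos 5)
Your proposal is correct and matches the paper's own (implicit) argument exactly: the paper derives $\rho_n^2=d/n$ from the variance identities in the symmetric case and then substitutes this into the second bound of Theorem \ref{1dmt}, with the qualitative CLT following because $\rho_{n_m}^2=d/n_m\to 0$ automatically. Nothing further is needed.
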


\begin{remark}\label{rem1dmt}
\begin{enumerate}[(a)]
 \item The previous Therorem \ref{1dmt} is a complete quantitative counterpart to de Jong's Theorem \ref{t:ordj}. The constant $\kappa_d$ appearing in the bound is given by $\kappa_d=C_d+2d$, where $C_d$ is a combinatorial constant defined in Equation \eqref{Cd} below. 
 \item In the context of multilinear forms in independent and standardized real-valued random variables $(X_i)_{i\in\N}$ considered in \cite{NPR-aop}, the authors had to assume that the uniform moment condition 
$\sup_{i\in\N}\E[X_i^4]<\infty$ is satisfied. It is easy to check that, for homogeneous sums, this condition is in fact equivalent to the \textit{hypercontractivity condition}
\[\sup_{n\in\N}D_n<\infty\quad\text{where}\quad D_n:=\max_{J\in\D_d}\frac{\E\bigl[W_J^4\bigr]}{\sigma_J^4}\,.\]
Interestingly, this condition was also assumed in the monograph \cite{deJo89} by de Jong who was only able to dispense with it in the later paper \cite{deJo90}.
Note further that the bounds for multilinear forms in independent random variables with arbitrary distributions derived in \cite{NPR-aop} are stated in terms of three times differentiable test functions whose first 
three derivatives are uniformly bounded by a constant. Hence, our Theorem \ref{1dmt} is not only more general than the corresponding result from \cite{NPR-aop} as far as the class of random functionals dealt with is concerned but 
is also stated in terms of much less smooth test functions. 
\item It should be mentioned that the original proof of Theorem \ref{t:ordj} in \cite{deJo90} applies a quantitative martingale CLT from \cite{HeyBr70} and, by carefully revising its proof, 
one would be able to derive a bound on the rate of convergence. This issue is also briefly addressed in the introduction of the monograph \cite{deJo89} but not pursued any further.
The resulting rate, however, would be of a much worse order than the rate provided by Theorem \ref{1dmt}. Roughly, the power $1/2$ appearing in our statements would have to be systematically replaced 
by the power $1/5$. Furthermore, as was shown in \cite{Haeu88} by means of an example, the Berry-Esseen bound for martingales from \cite{HeyBr70} cannot in general be improved with respect to the rate of convergence. 
Consequently, the techniques used by de Jong are not capable of providing sharp error bounds for his qualitative statement. Note that the phenomenon of generally sharp bounds on the rate of convergence for martingale CLTs 
which reduce to sub-optimal bounds in particular situations was already discovered in the paper \cite{Bolt82}. We also stress that, unlike our work, references \cite{deJo89, deJo90} do not contain any multidimensional statements.
\end{enumerate}
\end{remark}

Finally, we would like to mention that the paper \cite{RiRo97} also deals with bounds on the normal approximation of so-called degenerate weighted $U$-statistics of order $d=2$, which have the form 
\begin{equation*}
U=\sum_{1\leq i<j\leq n}w_{i,j}\psi(X_i,X_j) 
\end{equation*}
for some vector $X=(X_1,\dotsc,X_n)$ of i.i.d. random variables, some symmetric, degenerate kernel $\psi$ and with nonnegative weights $w_{i,j}$, $1\leq i<j\leq n$.
Note that the class of weighted $U$-statistics is strictly included in our framework, since we can define the degenerate kernel $f_{\{i,j\}}$ corresponding to the subset $\{i,j\}\in\D_2$ by $f_{\{i,j\}}=w_{i,j}\psi$, leading to the Hoeffding 
components $W_{\{i,j\}}=w_{i,j}\psi(X_i,X_j)$, $1\leq i<j\leq n$. This, of course, also holds for arbitrary positive integers $d$. Note that, in contrast to our work, the bounds given in \cite{RiRo97} are expressed in terms of quantities which are related explicitly to the kernel $\psi$ and to the weights $w_{i,j}$ rather than in terms of the fourth cumulant of $U$ and, hence, cannot be immediately compared to ours.
 
\subsection{Main results, II: multivariate normal approximations}\label{intro2} In this subsection we state a new approximation theorem for the distribution of vectors of degenerate, non-symmetric $U$-statistics by a suitable multivariate normal 
distribution. 
In particular, we show that an analog of de Jong's theorem \ref{t:ordj} holds in any dimension, see Theorem \ref{t:mq}. Note that, in the multivariate case, even this qualitative result relating the asymptotic normality of the vector of degenerate, non-symmetric $U$-statistics to fourth moment conditions is completely novel.\\
As before, let $X_1,\dotsc,X_n$ be the underlying sequence of independent random variables, let 
$r\in\mathbb{N}$ and for $1\leq i\leq r$ let $W(i)$ be a random variable on $(\Omega,\F,\Prob)$ which is measurable
with respect to $\F_{[n]}=\sigma(X_1,\dotsc,X_n)$ and whose Hoeffding decomposition is given by
\begin{equation*}
W(i)=\sum_{J\in\D_{p_i}}W_J(i)
\end{equation*}
for some $p_i\in\mathbb{N}$, i.e. $W(i)$ is a degenerate $U$-statistic of order $p_i$. Without loss of generality, we can assume that
$p_i\leq p_k$ whenever $1\leq i<k\leq r$. Thus, there is an $s\in\{1,\dotsc,r\}$, positive integers $r_1,\dotsc,r_s$ with $1\leq r_1<r_2<\dotsc<r_s=r$ 
and integers $1\leq q_1<q_2<\ldots<q_s$ such that 
\begin{equation*}
 p_i=q_l\quad\text{for all}\quad i\in\{r_{l-1}+1,\dotsc,r_l\}\quad\text{and all}\quad l=1,\dotsc,s\,,
\end{equation*}
where we set $r_0:=0$.
We define 
\begin{equation*}
 W:=(W(1),\dotsc,W(r))^T
\end{equation*}
and assume that each $W(i)\in L^4(\Prob)$ with 
\[\E\bigl[W(i)\bigr]=0\quad\text{and}\quad\Var\bigl(W(i)\bigr)=\E\bigl[W(i)^2\bigr]=\sum_{J\in\D_{p_i}}\E\bigl[W_J(i)^2\bigr]=1\,,\quad 1\leq i\leq r\,.\]
We also let 
\begin{equation*}
 v_{i,k}:=\Cov\bigl(W(i),W(k)\bigr)=\E\bigl[W(i)W(k)\bigr]\,,\quad 1\leq i\leq k\leq r\,,
\end{equation*}
and 
\[\V =\V(W) :=\Cov(W)=(v_{i,k})_{1\leq i,k\leq r}\,.\]
Note that $v_{i,i}=1$ for $i=1,\dotsc,r$ and $\abs{v_{i,k}}\leq1$ for $1\leq i,k\leq r$, by the Cauchy-Schwarz inequality. Note also that $v_{i,k}=0$ unless $p_i=p_k$. Hence, $\V$ is a block diagonal matrix. 
Throughout this section we denote by 
\begin{equation*}
Z=\bigl(Z(1),\dotsc,Z(r)\bigr)^T\sim N_r(0,\V)
\end{equation*}
a centered Gaussian vector with covariance matrix $\V$.
For $1\leq k\leq r$ and $J\in\D_{p_k}$ we define 
\begin{equation*}
 \sigma_{J}(k)^2:=\Var\bigl(W_J(k)\bigr)=\E\bigl[W_J(k)^2\bigr]\quad\text{and}\quad  \rho_{n,k}^2:=\max_{1\leq j\leq n}\sum_{\substack{J\in\D_{p_k}:\\ j\in J}}\sigma_{J}(k)^2\,.
\end{equation*}
\medskip

Before stating our multivariate normal approximation theorem, we have to introduce some more notation: For a vector $x=(x_1,\dotsc,x_r)^T\in\R^r$ we denote by $\Enorm{x}$ its \textit{Euclidean norm} and for a matrix $A\in\R^{r\times r}$ we denote by $\Opnorm{A}$ 
the \textit{operator norm} induced by the Euclidean norm, i.e., 
\[\Opnorm{A}:= \sup\{ \Enorm{Ax}\,:\ \Enorm{x} = 1\}\,.\]
More generally, for a $k$-multilinear form $\psi:(\R^r)^k\rightarrow\R$, $k\in\N$, we define the \textit{operator norm}
\[\Opnorm{\psi}:=\sup\left\{\abs{\psi(u_1,\ldots,u_k)}\,:\, u_j\in\R^r,\, \Enorm{u_j}=1,\, j=1,\ldots,k\,\right\}.\]
Recall that for a function $h:\R^r\rightarrow\R$, its minimum Lipschitz constant $M_1(h)$ is given by
\[M_1(h):=\sup_{x\not=y}\frac{\abs{h(x)-h(y)}}{\Enorm{x-y}}\in[0,\infty)\cup\{\infty\}.\]
If $h$ is differentiable, then $M_1(h)=\sup_{x\in\R^r}\Opnorm{Dh(x)}$. 
More generally, for $k\geq1$ and a $(k-1)$-times differentiable function $h:\R^r\rightarrow\R$ let
\[M_k(h):=\sup_{x\not=y}\frac{\Opnorm{D^{k-1}h(x)-D^{k-1}h(y)}}{\Enorm{x-y}}\,,\]
viewing the $(k-1)$-th derivative $D^{k-1}h$ of $h$ at any point $x$ as a $(k-1)$-multilinear form.
Then, if $h$ is actually $k$-times differentiable, we have $M_k(h)=\sup_{x\in\R^r}\Opnorm{D^kh(x)}$. 
Having in mind this identity, we define  $M_0(h):=\fnorm{h}$.\\
Recall that, for two matrices $A,B\in\R^{r\times r}$, their \textit{Hilbert-Schmidt inner product} is defined by 
\begin{eqnarray*}
\langle A,B\rangle_{\HS}:=\Tr\bigl(AB^T\bigr)=\Tr\bigl(BA^T\bigr)=\Tr\bigl(B^TA\bigr)=\sum_{i,j=1}^r a_{ij}b_{ij}\,.
\end{eqnarray*}
Thus, $\langle\cdot,\cdot\rangle_{\HS}$ is just the standard inner product on $\R^{r\times r}\cong \R^{r^2}$.
The corresponding \textit{Hilbert-Schmidt norm} will be denoted by $\HSnorm{\cdot}$.  
With this notion at hand, following \cite{ChaMe08} and \cite{Meck09}, for $k=2$ we finally define 
\[\tilde{M}_2(h):=\sup_{x\in\R^r}\HSnorm{\Hess h(x)}\,,\]
where $\Hess h$ is the \textit{Hessian matrix} corresponding to $h$.

\begin{theorem}\label{mdmt}
There exist finite constants $C_{q_l}$, $1\leq l\leq s$, only depending on $q_l$ as well as finite constants $C_{i,k}$, $1\leq i,k\leq r$, depending on $i$ and $k$ only through $p_i$ and $p_k$ such that, with the definition
\begin{align*}
 A&:=4\sum_{l=1}^s\frac{q_l^2}{q_1^2}\sum_{i,k=r_{l-1}+1}^{r_l}\Bigl(\E\bigl[W(i)^2W(k)^2\bigr]-\E\bigl[Z(i)^2Z(k)^2\bigr]\notag\\
&\hspace{2cm}+q_l\min\Bigl(\rho_{n,k}^2\,,\,\rho_{n,i}^2\Bigr)+q_l\rho_{n,k}\rho_{n,i}+C_{i,k}\max\bigl(\rho_{n,i}^2,\rho_{n,k}^2\bigr)\Bigr)\notag\\
&\;+2\sum_{1\leq l<m\leq s}\frac{(q_l+q_m)^2}{q_1^2}\sum_{i=r_{l-1}+1}^{r_l}\sum_{k=r_{m-1}+1}^{r_m}\Biggl[\Bigl(\E\bigl[W(i)^4\bigr]-1\Bigr)^{1/2}\notag\\
&\hspace{2cm}\Bigl(\E\bigl[W(k)^4\bigr]-3+\bigl(2q_m+C_{q_m}\bigr)\rho_{n,k}^2\Bigr)^{1/2}\notag\\
 &\hspace{3cm}+\min\Bigl(q_l \rho_{n,k}^2\,,\,q_m \rho_{n,i}^2\Bigr)+C_{i,k}\max\bigl(\rho_{n,i}^2,\rho_{n,k}^2\bigr)\Biggr]
\end{align*}
and under the above assumptions, we have the following bounds:
 \begin{enumerate}[{\normalfont(i)}]
  \item For any $h\in C^3(\R^r)$ such that $\E\bigl[\abs{h(W)}\bigr]<\infty$ and $\E\bigl[\abs{h(Z)}\bigr]<\infty$, 
\begin{align*}
&\babs{\E[h(W)]-\E[h(Z)]}\leq \frac{1}{4q_1}\tilde{M}_2(h)\sqrt{A}\\
&\;+\frac{\sqrt{2r}}{9}M_3(h)\sum_{l=1}^s\frac{q_l}{q_1}\sum_{i=r_{l-1}+1}^{r_l}\Bigl(2\bigl(\E\bigl[W(i)^4\bigr]-3\bigr)+ 3\bigl(C_{q_l}+2q_l\bigr)\rho_{n,i}^2\Bigr)^{1/2}\,.
\end{align*}
\item If $\V$ is in addition positive definite, then for each $h\in C^2(\R^r)$ such that\\ $\E\bigl[\abs{h(W)}\bigr]<\infty$ and $\E\bigl[\abs{h(Z)}\bigr]<\infty$, 
\begin{align*}
&\babs{\E[h(W)]-\E[h(Z)]}\leq  \frac{1}{\sqrt{2\pi}q_1}M_1(h)\Opnorm{\V^{-1/2}}\sqrt{A}\\
&\;+\frac{\sqrt{\pi r}}{6}M_2(h)\Opnorm{\V^{-1/2}}\sum_{l=1}^s\frac{q_l}{q_1}\sum_{i=r_{l-1}+1}^{r_l}\Bigl(2\bigl(\E\bigl[W(i)^4\bigr]-3\bigr)+ 3\bigl(C_{q_l}+2q_l\bigr)\rho_{n,i}^2\Bigr)^{1/2}\,.
\end{align*}
\end{enumerate}
\end{theorem}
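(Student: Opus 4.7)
The plan is to apply the multivariate version of Stein's method of exchangeable pairs (Meckes for part (i), Chatterjee-Meckes for part (ii)), adapted to the block structure induced by the orders $p_1 \leq \cdots \leq p_r$. Construct $(W, W')$ by picking $I$ uniformly at random from $[n]$, independently sampling a copy $X_I^\ast$ of $X_I$, and defining $W'(i)$ to be $W(i)$ with $X_I$ replaced by $X_I^\ast$. Writing $W(i) = \sum_{J\in\D_{p_i}} W_J(i)$ and using the Hoeffding degeneracy (Lemma \ref{hdle}, Point 2), one verifies the regression identity
$$\E[W'(i) - W(i) \mid \F_{[n]}] = -\frac{p_i}{n}\, W(i),$$
so the diagonal matrix $\Lambda := \diag(p_1,\ldots,p_r)/n$ plays the role of the regression matrix. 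Since $\V$ is block diagonal on the order groups $(q_1 < \cdots < q_s)$ and $\Lambda$ is constant within each block, $\Lambda$ and $\V$ commute and $\Lambda^{1/2}$ is well-defined.

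The standard multivariate exchangeable-pair machinery then reduces the problem to bounding, in $L^1$, the Hilbert-Schmidt norm of the symmetric matrix
$$E := \tfrac{1}{2}\,\Lambda^{-1/2}\,\E\bigl[(W'-W)(W'-W)^T \mid \F_{[n]}\bigr]\,\Lambda^{-1/2} - \V,$$
together with a cubic remainder $\sum_i \E\bigl[|W'(i) - W(i)|^3\bigr]$. The first quantity multiplies $\tilde M_2(h)$ (respectively $M_1(h)\Opnorm{\V^{-1/2}}$ in (ii)), the second multiplies $M_3(h)$ (respectively $M_2(h)\Opnorm{\V^{-1/2}}$). The block-diagonal splitting of $E$ according to the groups indexed by $l\leq m$ produces the outer structure of $A$ in the statement, with the rescalings $q_l/q_1$ and $(q_l+q_m)/q_1$ arising precisely from $\Opnorm{\Lambda^{-1/2}}$-type factors after normalising by the smallest order $q_1$.

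The heart of the argument is a Hoeffding expansion of the conditional covariance. Averaging over $I$ and $X_I^\ast$ yields
$$\E\bigl[(W'(i)-W(i))(W'(k)-W(k)) \mid \F_{[n]}\bigr] = \frac{1}{n}\sum_{j=1}^n \sum_{\substack{J\in\D_{p_i},\, j\in J\\ K\in\D_{p_k},\, j\in K}} \bigl(W_J(i)W_K(k) - R_{J,K,j}\bigr),$$
where each residual $R_{J,K,j}$ vanishes in expectation by degeneracy. When $p_i = p_k = q_l$, a double-counting argument on $|J\cap K|$ shows that the leading contribution equals $(2 q_l/n)\,W(i) W(k)$; after multiplying by $n/(2\sqrt{p_i p_k})$ and subtracting $v_{i,k}$, the resulting $L^1$-error is controlled by $\E[W(i)^2 W(k)^2] - \E[Z(i)^2 Z(k)^2]$ together with $\rho$-corrections from $J\cap K \neq \emptyset$ with $J \neq K$ (bounded via Lemma \ref{hdle}, Point 5 and Cauchy-Schwarz) and from the pure-diagonal $J = K$ case. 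When $p_i \neq p_k$, orthogonality of Hoeffding components eliminates the $W(i)W(k)$ term entirely, and Cauchy-Schwarz bounds the residual by $(\E[W(i)^4]-1)^{1/2}(\E[W(k)^4]-3+\cdots)^{1/2}$, matching the $l < m$ summand in $A$. The cubic remainder $\E\bigl[|W'(i) - W(i)|^3\bigr]$ is handled exactly as in the univariate Theorem \ref{1dmt}, producing the term $\bigl(2(\E[W(i)^4]-3) + 3(C_{q_l}+2q_l)\rho_{n,i}^2\bigr)^{1/2}$.

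The principal obstacle is the combinatorial bookkeeping in the same-order case: identifying the $L^1$-error in closed form as $\E[W(i)^2W(k)^2] - \E[Z(i)^2Z(k)^2]$ requires matching the fourth-order Hoeffding interactions between pairs $(J, J')$ and $(K, K')$ with the Gaussian Wick identity $\E[Z(i)^2Z(k)^2] = 1 + 2 v_{i,k}^2$ up to terms of order $\rho^2$, and showing that all partitions in which not every index is paired produce only $\rho$-order errors controlled by Points 4 and 5 of Lemma \ref{hdle}. This is the step that generates the combinatorial constants $C_{i,k}$ and $C_{q_l}$, and it is the main novelty beyond the machinery already developed for the univariate Theorem \ref{1dmt}.
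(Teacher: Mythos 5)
Your overall strategy coincides with the paper's: the same coordinate-resampling exchangeable pair, the regression matrix $\Lambda=\diag(p_1/n,\dotsc,p_r/n)$, a Meckes-type plug-in theorem with $\Sigma=\V$, and the univariate Lemma \ref{remlemma} recycled for the cubic remainder. But the step you call the heart of the argument is wrong as stated. Writing $W(i)W(k)=\sum_{\abs{M}\leq p_i+p_k}U_M(i,k)$ for the Hoeffding decomposition of the product, the correct identity (the paper's Lemma \ref{condexmult}) is
\begin{equation*}
n\,\E\bigl[(W'(i)-W(i))(W'(k)-W(k))\,\bigl|\,X\bigr]=\sum_{\substack{M\subseteq[n]:\\ \abs{M}\leq p_i+p_k-1}}\bigl(p_i+p_k-\abs{M}\bigr)\,U_M(i,k)\,,
\end{equation*}
in which the top stratum $\abs{M}=p_i+p_k$ --- carried exactly by the disjoint pairs $J\cap K=\emptyset$, which your double sum over $J,K\ni j$ never sees --- is annihilated. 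That annihilation is the entire mechanism: it removes from $\Var\bigl(W(i)W(k)\bigr)$ the non-vanishing amount $\sum_{\abs{M}=p_i+p_k}\E\bigl[U_M(i,k)^2\bigr]\approx 1+2v_{i,k}^2$ (evaluated via the disjoint-pair sums and the Wick identity), and only after this subtraction is the surviving variance of order $\E\bigl[W(i)^2W(k)^2\bigr]-\E\bigl[Z(i)^2Z(k)^2\bigr]$ plus $\rho^2$-corrections. Your identification of the leading term as $(2q_l/n)\,W(i)W(k)$ would instead make the error entry essentially $\tfrac{2q_l}{n}\bigl(W(i)W(k)-v_{i,k}\bigr)$, whose $L^1$-norm times $n$ stays of constant order $\approx 2q_l\bigl(1+v_{i,k}^2\bigr)^{1/2}$ even under de Jong's conditions; the claimed control by $\E[W(i)^2W(k)^2]-\E[Z(i)^2Z(k)^2]$ therefore fails and the resulting theorem would be vacuous. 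Relatedly, your residuals $R_{J,K,j}=-\E\bigl[W_J(i)W_K(k)\,\bigl|\,\F_{(J\cup K)\setminus\{j\}}\bigr]$ do \emph{not} all vanish in expectation: on the diagonal $J=K$ they have mean $-\E[W_J(i)W_J(k)]$ and are precisely what produces the centering $2\Lambda\V$, and away from the top stratum they are not small at all --- they supply the weights $p_i+p_k-\abs{M}$ above.

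A second, smaller gap: you dispose of the quadruples that are not fully paired by ``Points 4 and 5 of Lemma \ref{hdle} and Cauchy--Schwarz,'' but this is where the paper's genuine technical work lives. The deficit $S_0(i,k)$ enters the variance bound with a minus sign and is controlled only \emph{one-sidedly}, $S_0(i,k)\geq-\tau_{i,k}$ (Proposition \ref{s0propgen}), by exhibiting $S_0(i,k)$ plus a remainder as a sum of squares of conditional expectations onto the symmetric differences $\F_{J\Delta M}$ (Lemmas \ref{le4dj} and \ref{le3dj}); and the estimate $\tau_{i,k}\leq C_{i,k}\max\bigl(\rho_{n,i}^2,\rho_{n,k}^2\bigr)$ requires the shadow/equivalence-class combinatorics of Propositions \ref{taubound} and \ref{gentaubound}, which is also where the improvement from de Jong's order $\rho$ to order $\rho^2$ is obtained. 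You correctly flag this step as the source of the constants $C_{i,k}$ and $C_{q_l}$, but dissociation and Cauchy--Schwarz alone will not deliver it.
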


Fix $r\in\N$. Since the class of all compactly supported, three times differentiable functions $h$ on $\R^r$ is convergence-determining, from Theorem \ref{mdmt} (i) we obtain the following statement, which is a new multidimensional extension of Theorem \ref{t:ordj}.

\begin{theorem}\label{t:mq} Fix $r\geq 2$, as well as integers $p_1,...,p_r$, and let $n_m\to \infty$, as $m\to \infty$. Let $W_m:=(W_m(1),\dotsc,W_m(r))^T$, $m\geq 1$, be a sequence of random vectors such that each $W_m(k)$ is a
centered, unit variance degenerate $U$-statistic of order $p_k$, whose argument is the vector of independent random elements $(X_1^{(m)},...,X_{n_m}^{(m)})$. 
Furthermore, let $\mathbf{\Sigma}\in\R^{r\times r}$ be a positive semi-definite matrix with $\mathbf{\Sigma}(j,j)=1$ for $j=1,\dotsc,r$ and denote by $N= (N(1),...,N(r))^T\sim N_r(0,\mathbf{\Sigma})$ a centered Gaussian vector with covariance matrix $\mathbf{\Sigma}$.
Assume the following:
\begin{itemize}
\item[\rm (i)]  The covariance matrix of $W_m$ converges to $\mathbf{\Sigma}$;


\item[\rm (ii)] As $m\to \infty$, $\rho_{n_m,k}^2\to 0$, for every $k=1,...,r$;

\item[\rm (iii)] As $m\to \infty$, $\E[W_m(k)^4] \to 3$, for every $k=1,...,r$;

\item[\rm (iv)] If $j\not=k$ but $p_j = p_k$ then, as $m\to \infty$,\\
$\E[ W_m(j)^2W_m(k)^2] \to \E[N(j)^2N(k)^2]=1+(\mathbf{\Sigma}(j,k))^2$.
\end{itemize}
Then, as $m\to \infty$, $W_m$ converges in distribution to $N$.

\end{theorem}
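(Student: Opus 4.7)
The plan is to extract the qualitative convergence from the explicit bound of Theorem~\ref{mdmt}(i). Since the class $C^3_c(\R^r)$ of compactly supported $C^3$ functions is convergence-determining on $\R^r$, it suffices to prove $\E[h(W_m)]\to\E[h(N)]$ for every $h\in C^3_c(\R^r)$, for which both $\tilde{M}_2(h)$ and $M_3(h)$ are finite constants independent of $m$. I introduce the auxiliary Gaussian vector $Z_m\sim N_r(0,\V(W_m))$ having the same covariance as $W_m$, and split
\[
\babs{\E[h(W_m)]-\E[h(N)]}\le \babs{\E[h(W_m)]-\E[h(Z_m)]}+\babs{\E[h(Z_m)]-\E[h(N)]}.
\]
The second summand vanishes as $m\to\infty$ because hypothesis (i) gives $\V(W_m)\to\mathbf{\Sigma}$, so $Z_m$ converges weakly to $N$ and $h$ is bounded continuous.

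For the first summand, Theorem~\ref{mdmt}(i) supplies the bound $\tfrac{1}{4q_1}\tilde{M}_2(h)\sqrt{A_m}+\tfrac{\sqrt{2r}}{9}M_3(h)B_m$, where $A_m$ and $B_m$ are the two quantities in that theorem applied to $W_m$. The quantity $B_m$ is a fixed finite sum of factors of the shape $(2(\E[W_m(i)^4]-3)+3(C_{q_l}+2q_l)\rho_{n_m,i}^2)^{1/2}$, each of which tends to $0$ by hypotheses (ii) and (iii); thus $B_m\to 0$. For $A_m$ I would dissect block by block. In the same-order blocks, the contributions $\E[W_m(i)^2W_m(k)^2]-\E[Z_m(i)^2Z_m(k)^2]$ tend to $0$: for $i=k$ by (iii) since $\E[Z_m(i)^4]=3$, and for $i\ne k$ with $p_i=p_k$ by hypothesis (iv) combined with Isserlis' formula and (i) (the Gaussian mixed fourth moment $\E[Z_m(i)^2Z_m(k)^2]$ depends continuously on $\V(W_m)\to\mathbf{\Sigma}$). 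The remaining same-order summands involve only powers of $\rho_{n_m,\cdot}^2$ and vanish by (ii). The mixed-order blocks contain a factor $(\E[W_m(k)^4]-3+(2q_m+C_{q_m})\rho_{n_m,k}^2)^{1/2}\to 0$ multiplied by the bounded factor $(\E[W_m(i)^4]-1)^{1/2}\to\sqrt{2}$, plus further terms in $\rho_{n_m,\cdot}^2$ alone; all go to zero. Hence $A_m\to 0$ and the first summand tends to $0$ as well.

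Combining the two estimates yields $\E[h(W_m)]\to\E[h(N)]$ for every $h\in C^3_c(\R^r)$, which is weak convergence. The only point requiring real attention is the bookkeeping inside $A_m$: hypothesis (iv) is stated against the \emph{fixed} limit $\mathbf{\Sigma}$, whereas Theorem~\ref{mdmt}(i) is naturally calibrated against the \emph{$m$-dependent} covariance $\V(W_m)$ through $Z_m$. Bridging this gap cleanly means adding and subtracting the limiting expression $\E[N(i)^2N(k)^2]$ inside each same-order block of $A_m$ and applying (i) \emph{before} the square root is taken, so that each piece is shown to be $o(1)$ and no spurious constants survive under the global square root.
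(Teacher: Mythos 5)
Your proposal is correct and takes essentially the same route as the paper, whose entire proof of Theorem \ref{t:mq} consists of applying Theorem \ref{mdmt}(i) to the convergence-determining class of compactly supported $C^3$ functions; your interpolating Gaussian $Z_m\sim N_r(0,\V(W_m))$, the triangle-inequality split, and the term-by-term verification that $A_m\to 0$ and the cubic term vanishes simply make explicit the bookkeeping that the paper leaves to the reader. (One incidental remark: by Isserlis' formula, and consistently with the paper's own computation $\E\bigl[Z(i)^2Z(k)^2\bigr]=1+2v_{i,k}^2$ in the proof of Theorem \ref{mdmt}, the value $1+(\mathbf{\Sigma}(j,k))^2$ printed in condition (iv) is a typo for $1+2(\mathbf{\Sigma}(j,k))^2$; since your argument only uses (iv) in the form $\E[W_m(j)^2W_m(k)^2]\to\E[N(j)^2N(k)^2]$, this does not affect its validity.)
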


In the framework of the normal approximation of vectors of eigenfunctions of diffusive Markov semigroups, a condition similar to (iv) in the above statement has been recently introduced and applied in \cite{CNPP}. The rest of the paper is organized as follows: Section 2 contains the proof of our one-dimensional result, Section 3 focusses on our multidimensional statements, whereas Section 4 contains the detailed proofs of several technical lemmas.


\section{Proof of the one-dimensional theorem}\label{1dim}
In this section we give a detailed proof of Theorem \ref{1dmt}. First we review Stein's method of exchangeable pairs for univariate normal approximation. 
\subsection{Stein's method of exchangeable pairs}\label{1dimex}
The exchangeable pairs approach within Stein's method dates back to Stein's celebrated monograph \cite{St86}. Recall that a pair $(X,X')$ of random elements on a common probability space is called \textit{exchangeable}, if 
\begin{equation*}
 (X,X')\stackrel{\D}{=}(X',X)\,.
 \end{equation*}
In \cite{St86} C. Stein extensively illustrated the fact that a given normalized random variable $W$ is close in distribution to $Z\sim N(0,1)$, whenever one can construct another random variable $W'$ on the same space such that: (i) $W'$ is `close' to $W$ in some proper, quantifiable sense, (ii) the pair $(W,W')$ is exchangeable, (iii) the \textit{linear regression property}
\begin{equation}\label{linregprop}
 \E\bigl[W'-W\,\bigl|\,W\bigr]=-\lambda W
\end{equation}
is satisfied for some small $\lambda>0$, and (iv) the conditional second moment of $W'-W$ given $W$ is close to its mean, the constant $2\lambda$, in the $L^1$ metric.
For a precise statement see Theorem \ref{1dimplugin} below.\\
The range of examples to which this method can be applied was considerably extended by the work \cite{RiRo97} by Rinott and Rotar, who proved bounds on the distance to normality under the 
condition that the linear regression property is only approximately satisfied, i.e. that there is some negligible remainder term $R$ such that 
\begin{equation}\label{genlinreg}
\frac{1}{\lambda}\E\bigl[W'-W\,\bigl|\,\mathcal{G}\bigr]=-W+R
\end{equation}
is satisfied, where $\mathcal{G}$ is a sub-$\sigma$-field of $\F$ such that $\sigma(W)\subseteq\mathcal{G}$. The method of exchangeable pairs has been generalized to other absolutely continuous distributions, like the exponential (\!\!\cite{CFR11} and \cite{FulRos13}), the multivariate normal 
(\!\!\cite{ChaMe08}, \cite{ReiRoe09} and \cite{Meck09}) and the Beta distribution \cite{DoeBeta}. It has also been developed for general classes of one-dimensional absolutely continuous distributions 
in \cite{ChSh}, \cite{EiLo10} and \cite{DoeBeta}. As was observed in \cite{Ro08}, in the case of one-dimensional distributional approximation one may in general relax the exchangeability condition to the assumption that 
$W$ and $W'$ be identically distributed.\\
In this article we focus on the exchangeable pairs method in the context of one- and multidimensional normal approximation. The following result is a variant of Theorem 1, Lecture 3 in \cite{St86} (see also Theorem 4.9 in \cite{CGS}). It slightly improves on these result with respect to the constants appearing in the bound and is also stated in terms of identically distributed random variables $W,W'$ as opposed to exchangeable ones as well as for general sub-$\sigma$-fields $\mathcal{G}$ 
of $\F$ with $\sigma(W)\subseteq\mathcal{G}$. The proof is standard and therefore omitted from the paper. Moreover, the result is a direct consequence of Proposition 3.19 in \cite{DoeBeta} together with the best known bounds 
on the first two derivatives of the solution to the standard normal Stein equation for Lipschitz test functions (see e.g. Lemma 2.4 in \cite{CGS}).  

\begin{theorem}\label{1dimplugin}
 Let $(W,W')$ be a pair of identically distributed, square-integrable random variables on $(\Omega,\F,\Prob)$ such that, for some $\lambda>0$, \eqref{linregprop} holds. Furthermore, let $\mathcal{G}$ be a sub-$\sigma$-field of $\F$ with $\sigma(W)\subseteq\mathcal{G}$. Then, we have the bound 
\begin{align}\label{1dexbound}
 d_{\rm Wass}(W,Z)&\leq\sqrt{\frac{2}{\pi}}\sqrt{\Var\Bigl(\frac{1}{2\lambda}\E\bigl[(W'-W)^2\,\bigl|\,\mathcal{G}\bigr]\Bigr)} +\frac{1}{3\lambda}\E\babs{W'-W}^3\,.
\end{align}
\end{theorem}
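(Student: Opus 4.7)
The plan is to run the standard exchangeable-pair machinery of Stein's method, with the only twist being that exchangeability is weakened to identical distribution of $(W,W')$. First I would fix a test function $h\in\mathrm{Lip}(1)$ and let $f_h$ denote the unique bounded solution of the Stein equation $f'(w)-wf(w)=h(w)-\E h(Z)$. The estimates recalled in Lemma~2.4 of \cite{CGS} give $\|f_h\|_\infty\le \sqrt{\pi/2}$, $\|f_h'\|_\infty\le \sqrt{2/\pi}$ and $\|f_h''\|_\infty\le 2$, so that the task reduces to controlling $\E[f_h'(W)-Wf_h(W)]$ uniformly in such $h$.

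The key manipulation combines the linear regression property with an antisymmetrization trick based on identical distribution. Since $f_h(W)$ is $\sigma(W)$-measurable and $\sigma(W)\subseteq\mathcal{G}$, property \eqref{linregprop} yields
\[
-\lambda\,\E[W f_h(W)] \;=\; \E\bigl[\E[W'-W\mid\mathcal{G}]\,f_h(W)\bigr] \;=\; \E\bigl[(W'-W)\,f_h(W)\bigr].
\]
To re-express the right-hand side, I would introduce the antiderivative $F(w):=\int_0^w f_h(t)\,dt$ and use that identical distribution of $W$ and $W'$, together with the integrability of $F(W)$ (which follows from $\|f_h\|_\infty<\infty$ and $W\in L^1$), gives $\E[F(W')-F(W)]=0$. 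A second-order Taylor expansion of $F$ at $W$, using $\|F'''\|_\infty=\|f_h''\|_\infty\le 2$, then leads to
\[
\E\bigl[(W'-W)\,f_h(W)\bigr] \;=\; -\tfrac{1}{2}\,\E\bigl[f_h'(W)(W'-W)^2\bigr] \;-\; \E[R],
\]
with remainder satisfying $|R|\le \tfrac{1}{6}\|f_h''\|_\infty|W'-W|^3$. Combining the two displays, and invoking the tower property (valid since $f_h'(W)$ is $\mathcal{G}$-measurable) to pull $\E[(W'-W)^2\mid\mathcal{G}]$ inside, yields the master identity
\[
\E\bigl[f_h'(W)-Wf_h(W)\bigr] \;=\; \E\Bigl[f_h'(W)\Bigl(1-\tfrac{1}{2\lambda}\E[(W'-W)^2\mid\mathcal{G}]\Bigr)\Bigr] \;+\; \tfrac{1}{\lambda}\,\E[R].
\]

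The remaining work is to bound the two pieces separately. The remainder contributes at most $\tfrac{1}{6\lambda}\|f_h''\|_\infty\,\E|W'-W|^3 \le \tfrac{1}{3\lambda}\E|W'-W|^3$, matching the second summand in \eqref{1dexbound}. For the main term, one bounds $|f_h'(W)|$ pointwise by $\sqrt{2/\pi}$; under the (implicit) normalization $\E[W^2]=1$ used throughout the paper, the computation $\E[(W'-W)^2] = 2\E[W^2] - 2\E[W\,\E(W'\mid W)] = 2\lambda\,\E[W^2]= 2\lambda$ shows that the random variable $\tfrac{1}{2\lambda}\E[(W'-W)^2\mid\mathcal{G}]$ has mean exactly $1$, whence its $L^1$ distance to $1$ is dominated by its standard deviation by Jensen's inequality. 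Taking the supremum over $h\in\mathrm{Lip}(1)$ then produces \eqref{1dexbound}. There is no substantive obstacle; the only bookkeeping is the integrability needed to justify $\E[F(W')-F(W)]=0$, which is automatic from $\|f_h\|_\infty<\infty$ and $W\in L^1$, explaining why the author labels the proof \emph{standard}.
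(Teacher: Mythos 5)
Your proof is correct and follows essentially the same route that the paper itself points to when omitting the proof (Proposition 3.19 of \cite{DoeBeta} together with the bounds of Lemma 2.4 in \cite{CGS}): the regression identity, R\"ollin's antiderivative trick $\E[F(W')-F(W)]=0$ replacing exchangeability by equality in distribution, a second-order Taylor expansion with $\fnorm{f_h''}\leq 2$, and $\E\abs{T-\E T}\leq\sqrt{\Var(T)}$, where you correctly flag that the variance-only form of \eqref{1dexbound} implicitly requires the normalization $\E[W^2]=1$ (which the paper imposes in \eqref{stw}) so that $\E[(W'-W)^2]=2\lambda$. The only cosmetic slip is the constant $\sqrt{\pi/2}$ for $\fnorm{f_h}$, which is the bound for bounded test functions (for $1$-Lipschitz $h$ the correct bound is $\fnorm{f_h}\leq 2$), but since you use only the finiteness of $\fnorm{f_h}$ to justify integrability of $F(W)$, nothing in the argument is affected.
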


For the proof of Theorem \ref{1dmt} we will need the following new auxiliary result about exchangeable pairs satisfying identity \eqref{linregprop} which might be of independent interest.

\begin{lemma}\label{exlemma}
Let $(W,W')$ be an exchangeable pair of real-valued random variables in $L^4(\Prob)$ such that, for some $\lambda>0$, \eqref{linregprop} is satisfied and let $\mathcal{G}$ be a sub-$\sigma$-field of $\F$ with $\sigma(W)\subseteq\mathcal{G}$. Then, 
 \begin{align*}
  \frac{1}{4\lambda}\E\bigl[(W'-W)^4\bigr]&=3\E\Bigl[W^2\frac{1}{2\lambda}\E\bigl[(W'-W)^2\,\bigl|\,\G\bigr]\Bigr]
	-\E\bigl[W^4\bigr]\,.
	\end{align*}
\end{lemma}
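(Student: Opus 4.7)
The plan is to obtain the identity by exploiting exchangeability in the form $\E[g(W,W')]=\E[g(W',W)]$ for every integrable $g$, combined with the regression property. Set $D:=W'-W$ and note that, since $W^2$ is $\G$-measurable, the tower property gives $\E[W^2(W'-W)^2]=\E\bigl[W^2\E[D^2\mid\G]\bigr]$, so the target identity is equivalent to
\[
\E[D^4]=6\E[W^2D^2]-4\lambda\E[W^4].
\]
Thus it suffices to produce two linear relations among the four quantities $\E[W^3D]$, $\E[W^2D^2]$, $\E[WD^3]$, $\E[D^4]$ and plug in $\E[W^3D]=\E[W^3\E[D\mid W]]=-\lambda\E[W^4]$.

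First, I would exploit the identity $\E[(W')^4]=\E[W^4]$ (valid since $W'\stackrel{d}{=}W$). Expanding $(W')^4=(W+D)^4$ by the binomial theorem and taking expectations yields
\[
4\E[W^3D]+6\E[W^2D^2]+4\E[WD^3]+\E[D^4]=0. \qquad (\ast)
\]
Substituting $\E[W^3D]=-\lambda\E[W^4]$ already reduces this to a single relation between $\E[W^2D^2]$, $\E[WD^3]$ and $\E[D^4]$; the issue is that we have two unknowns left, so one more identity is needed.

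The key second identity comes from the antisymmetric combination $g(W,W')-g(W',W)$ with $g(a,b)=b^3 a$: exchangeability gives $\E[W'^3W-W^3W']=0$. Factoring,
\[
W'^3W-W^3W'=WW'(W'^2-W^2)=WW'D(W'+W)=WW'D(2W+D),
\]
and substituting $W'=W+D$ then taking expectations produces
\[
2\E[W^3D]+3\E[W^2D^2]+\E[WD^3]=0,
\]
which, after using $\E[W^3D]=-\lambda\E[W^4]$, yields $\E[WD^3]=2\lambda\E[W^4]-3\E[W^2D^2]$. Plugging this together with $\E[W^3D]=-\lambda\E[W^4]$ back into $(\ast)$ gives exactly $\E[D^4]=6\E[W^2D^2]-4\lambda\E[W^4]$. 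Dividing by $4\lambda$ and invoking the tower property finishes the proof.

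The only nontrivial step is spotting the right antisymmetric test function $g(a,b)=b^3a$; everything else is algebraic manipulation. Integrability of all terms is not an issue because $W,W'\in L^4(\Prob)$ guarantees that every expectation above is finite.
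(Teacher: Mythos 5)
Your proof is correct, and it is essentially the paper's own argument in different bookkeeping: both rest on exactly the same three ingredients --- the equality of marginals $\E[(W')^4]=\E[W^4]$, the exchangeability identity $\E[W'^3W]=\E[W^3W']$ (the paper packages these two as $\tfrac12\E[(W'-W)^4]=\E\bigl[W(W-W')^3\bigr]$ and then computes $\E[W^3W']$ and $\E[(WW')^2]$), and the regression property used once in the form $\E[W^3(W'-W)]=-\lambda\E[W^4]$ --- followed by fourth-moment algebra and a single application of the tower property, which is legitimate here since $\sigma(W)\subseteq\G$ and $W,W'\in L^4(\Prob)$ makes every mixed moment finite. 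All of your individual steps check out (the binomial relation $(\ast)$, the factorization giving $2\E[W^3D]+3\E[W^2D^2]+\E[WD^3]=0$, and the final substitution yielding $\E[D^4]=6\E[W^2D^2]-4\lambda\E[W^4]$), so the argument is complete.
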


\begin{proof}
By exchangeability of $(W,W')$ we have 
\begin{align}\label{el1}
\frac12\E\bigl[(W'-W)^4\bigr]&=\E\bigl[W( W-W')^3\bigr]=\E\Bigl[W^4-3W^3W'+3(W'W)^2-W(W')^3\Bigr]\notag\\
&=\E[W^4]+3\E\bigl[(WW')^2\bigr]-4\E\bigl[W^3W'\bigr]\,.
\end{align}
Also, by \eqref{genlinreg}
\begin{align}\label{el2}
\E\bigl[W^3W'\bigr]&=\E\Bigl[W^3\E\bigl[W'\,\bigl|\,\G\bigl]\Bigr]=(1-\lambda)\E[W^4]
\end{align}
and
\begin{align}\label{el3}
\E\bigl[(WW')^2\bigr]&=\Bigl[W^2\E\bigl[(W'-W+W)^2\,\bigl|\,\G\bigr]\Bigr]\notag\\
&=\E\Bigl[W^2\E\bigl[(W'-W)^2+2W(W'-W)+W^2\,\bigl|\,\G\bigr]\Bigr]\notag\\
&=\E[W^4]-2\lambda\E[W^4]+\E\Bigl[W^2\E[(W'-W)^2\,\bigl|\,\G\bigr]\Bigr]\notag\\
&=(1-2\lambda)\E[W^4]+\E\Bigl[W^2\E[(W'-W)^2\,\bigl|\,\G\bigr]\Bigr]\,.
\end{align}
Thus, from \eqref{el1}, \eqref{el2} and \eqref{el3} we obtain that 
\begin{align*}
\frac12\E\bigl[(W'-W)^4\bigr]&=\Bigl(1+3(1-2\lambda)-4(1-\lambda)\Bigr)\E[W^4]+3\E\Bigl[W^2\E[(W'-W)^2\,\bigl|\,\G\bigr]\Bigr]\\
&=3\E\Bigl[W^2\E[(W'-W)^2\,\bigl|\,\G\bigr]-2\lambda\E[W^4]\,,
\end{align*}
proving the lemma.\\
\end{proof}

\subsection{Proof of Theorem \ref{1dmt}}\label{proof1d}
Let $W\in L^4(\Prob)$ be as in Theorem \ref{1dmt} such that its Hoeffding decomposition is given by \eqref{dhom}.
We are going to apply Theorem \ref{1dimplugin} to the $\sigma$-field $\mathcal{G}=\sigma(X_1,\dots,X_n)$ and to the exchangeable pair $(W,W')$ which is constructed as follows:  
 Let $Y:=(Y_j)_{1\leq j\leq n}$ be an independent copy of $X:=(X_j)_{1\leq j\leq n}$ and let $\alpha$ be uniformly distributed 
on $\{1,\dotsc,n\}$ such that $X,Y$ and $\alpha$ are jointly independent. Letting, for $j=1,\dotsc,n$, 
\begin{equation*}
 X_j':=\begin{cases}
        Y_j\,,&\text{if } \alpha=j\\
        X_j\,,&\text{if }\alpha\not=j
       \end{cases}
\end{equation*}
and
\begin{equation*}
 X':=(X_1',\dotsc,X_n')
\end{equation*}
it is easy to see that the pair $(X,X')$ is exchangeable. Finally, as exchangeability is preserved under functions, defining
\begin{align*}
 W':=f(X_1',\dotsc,X_n')&=\sum_{j=1}^n 1_{\{\alpha=j\}}\Biggl(\sum_{\substack{J\in\D_d:\\j\notin J}}W_J
+\sum_{\substack{J\in\D_d:\\j\in J}}W_J^{(j)}\Biggr)\\
 &=:\sum_{\substack{J\in\D_d:\\ \alpha\notin J}}W_J+\sum_{\substack{J\in\D_d:\\ \alpha\in J}}W_J^{(\alpha)}\,,
\end{align*}
also the pair $(W,W')$ is exchangeable. Here, for $J=\{j_1,\dotsc,j_d\}\in\D_d$ with $1\leq j_1<j_2<\dotsc<j_d\leq n$ and $j=j_k\in J$, we write
\begin{equation*}
 W_J^{(j)}:=f_J(X_{j_1},\dotsc,X_{j_{k-1}},Y_{j_k},X_{j_{k+1}},\dotsc,X_{j_d})\,,
\end{equation*}
where the kernel $f_J$ is given by \eqref{kernelsfj}.
We now show that the pair $(W,W')$ satisfies Stein's linear regression property \eqref{linregprop} exactly with coefficient $\lambda=d/n$.

\begin{lemma}\label{linreg}
With the above definitions, we have 
\begin{equation*}
 \E\bigl[W'-W\,\bigl|\,W\bigr]=\E\bigl[W'-W\,\bigl|\,X\bigr]=-\frac{d}{n}W\,.
\end{equation*}
\end{lemma}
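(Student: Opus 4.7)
The plan is to prove the stronger identity $\E[W'-W \mid X] = -\frac{d}{n}W$ first, and then deduce the conditional expectation given $W$ by the tower property, using that $\sigma(W) \subseteq \sigma(X)$ and that the right-hand side is already $\sigma(W)$-measurable.

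To compute $\E[W'-W \mid X]$, I would first write the increment explicitly by conditioning on the uniform index $\alpha$ (which is independent of $X$ and $Y$):
\begin{equation*}
\E[W'-W \mid X] = \frac{1}{n}\sum_{j=1}^n \sum_{\substack{J\in\D_d:\\ j\in J}} \E\bigl[W_J^{(j)} - W_J \,\bigl|\, X\bigr].
\end{equation*}
Since $W_J$ is itself a function of $X$, the $W_J$ term contributes $-\frac{1}{n}\sum_{j=1}^n \sum_{J\ni j} W_J = -\frac{d}{n}\sum_{J\in\D_d} W_J = -\frac{d}{n}W$, where I used that each $J\in\D_d$ is counted exactly $|J|=d$ times.

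The main step is therefore to show that $\E[W_J^{(j)} \mid X] = 0$ for every $J\in\D_d$ and every $j\in J$. By construction, $W_J^{(j)}$ is obtained from $W_J = f_J(X_i, i\in J)$ by replacing the coordinate $X_j$ with an independent copy $Y_j$ (which is also independent of $X$). Thus, by Fubini,
\begin{equation*}
\E\bigl[W_J^{(j)} \,\bigl|\, X\bigr] = \E\bigl[f_J(X_i, i\in J\setminus\{j\},\, Y_j) \,\bigl|\, \F_{J\setminus\{j\}}\bigr],
\end{equation*}
and since $Y_j$ is independent of $\F_{J\setminus\{j\}}$, this quantity equals $\E[W_J \mid \F_{J\setminus\{j\}}]$. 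But the Hoeffding degeneracy \eqref{hddef} gives $\E[W_J \mid \F_K] = 0$ whenever $J \not\subseteq K$, which applies here with $K = J\setminus\{j\}$. Hence the $W_J^{(j)}$ contribution vanishes, proving $\E[W'-W\mid X] = -\frac{d}{n}W$.

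The hardest (but still routine) part is making the replacement-argument rigorous: one must carefully justify that conditioning on the full vector $X$ and then evaluating $W_J^{(j)}$ is the same as freezing the coordinates $X_i$ for $i\in J\setminus\{j\}$ and integrating out the fresh variable $Y_j$. This follows cleanly from independence of $X$ and $Y$ combined with standard properties of conditional expectation. Once $\E[W'-W\mid X] = -\frac{d}{n}W$ is established, the identity $\E[W'-W\mid W] = -\frac{d}{n}W$ is immediate by taking conditional expectation with respect to $\sigma(W) \subseteq \sigma(X)$, since $-\frac{d}{n}W$ is trivially $\sigma(W)$-measurable.
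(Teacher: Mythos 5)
Your proposal is correct and follows essentially the same route as the paper's proof: conditioning on the uniform index $\alpha$, identifying $\E\bigl[W_J^{(j)}\,\bigl|\,X\bigr]$ with $\E\bigl[W_J\,\bigl|\,\F_{J\setminus\{j\}}\bigr]$ via independence of $Y_j$, invoking the Hoeffding degeneracy \eqref{hddef} to make this vanish, and counting each $J\in\D_d$ exactly $d$ times. The only cosmetic difference is that you spell out the tower-property reduction for the first equality, which the paper dispatches with ``it suffices to prove the second equality.''
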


\begin{proof}
 It suffices to prove the second equality. Note that 
 \begin{equation*}
  W'-W=\sum_{j=1}^n 1_{\{\alpha=j\}}\sum_{\substack{J\in\D_d:\\j\in J}}\Bigl(W_J^{(j)}-W_J\Bigr)=
	\sum_{\substack{J\in\D_d:\\\alpha\in J}}\Bigl(W_J^{(\alpha)}-W_J\Bigr)\,.
 \end{equation*}
Hence, by independence, 
\begin{align*}
 \E\bigl[W'-W\,\bigl|\,X\bigr]&=\frac{1}{n}\sum_{j=1}^n\sum_{J:j\in J}\Bigl(\E\bigl[W_J^{(j)}\,\bigl|\,X\bigr]-W_J\Bigr)\\
 &=\frac{1}{n}\sum_{j=1}^n\sum_{J:j\in J}\Bigl(\E\bigl[W_J^{(j)}\,\bigl|\,X_i,\,i\in J\setminus\{j\}\bigr]-W_J\Bigr)\\
 &=\frac{1}{n}\sum_{j=1}^n\sum_{J:j\in J}\Bigl(\E\bigl[W_J\,\bigl|\,\F_{J\setminus\{j\}}\bigr]-W_J\Bigr)\\
 &=-\frac{1}{n}\sum_{j=1}^n\sum_{J:j\in J}W_J=-\frac{1}{n}\sum_{J\in \D_d}W_J\sum_{j\in J}1\\
 &=-\frac{d}{n}\sum_{J\in \D_d}W_J=-\frac{d}{n}W\,.
\end{align*}
Here, we have used the defining property of the Hoeffding decomposition to obtain the fourth equality.\\
\end{proof}

We would like to mention that the same construction of the exchangeable pair $(W,W')$ was used in \cite{RiRo97} in the 
situation of weighted $U$-statistics. They also noted the validity of \eqref{linregprop} with 
$\lambda=d/n$ in the special case of completely degenerate weighted $U$-statistics of order $d$.\\
In order to apply \eqref{1dexbound}, by Lemma \ref{linreg}, we thus have to compute an upper bound on the variance of $\frac{n}{2d}\E\bigl[(W'-W)^2\,\bigl|\,X\bigr]$. This is done by finding the Hoeffding 
decomposition of this quantity in terms of the Hoeffding decomposition of $W^2$ for which we will now find a new convenient expression.
More generally, we derive a formula for the Hoeffding decomposition of the product of two degenerate $U$-statistics, which will also be needed for the proof of Theorem \ref{mdmt}.

Assume that $1\leq p,q\leq n$ and that $W$ and $V$ are square-integrable $p$- and $q$-degenerate $U$-statistics with respect to the same underlying sequence $X$, respectively, with Hoeffding decompositions 
\begin{equation}\label{hdvw}
 W=\sum_{J\in\D_p}W_J\quad\text{and}\quad V=\sum_{K\in\D_q}V_K\,.
\end{equation}
The product $U:=VW$ in general is not a degenerate $U$-statistic, but it clearly has a Hoeffding decomposition of the form
\begin{equation}\label{hdu}
 U=\sum_{M\subseteq [n]}U_M=\sum_{\substack{M\subseteq [n]:\\\abs{M}\leq p+q}} U_M\,.
\end{equation}

The following simple observation will be crucial for the computation of the Hoeffding decompositions of both $VW$ and of the quantity $\frac{n}{2d}\E\bigl[(W'-W)^2\,\bigl|\,X\bigr]$.
\begin{lemma}\label{hdle1}
If $L\subseteq[n]$ is such that $J\Delta K=(J\setminus K)\cup (K\setminus J)\not\subseteq L$, then 
\[\E\bigl[W_JV_K\,\bigl|\,\F_L\bigr]=0\,.\]
\end{lemma}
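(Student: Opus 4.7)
The plan is to reduce this directly to the defining degeneracy property \eqref{hddef} of the Hoeffding decomposition by conditioning on a larger $\sigma$-field and then applying the tower property. Since the hypothesis $J\Delta K \not\subseteq L$ is symmetric in $J$ and $K$, I will without loss of generality pick an index $i \in J\setminus K$ with $i \notin L$ (the case $i\in K\setminus J$ is handled by swapping the roles of $W_J$ and $V_K$).

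First, I would enlarge $L$ to $M := L\cup K$ and condition on $\F_M$. Because $V_K$ is $\F_K$-measurable and $K\subseteq M$, the random variable $V_K$ is $\F_M$-measurable and can be pulled out:
\begin{equation*}
\E\bigl[W_J V_K \,\bigl|\, \F_M\bigr] = V_K \cdot \E\bigl[W_J\,\bigl|\,\F_M\bigr].
\end{equation*}
The key observation is that $J\not\subseteq M$: indeed, $i\in J$ but $i\notin L$ and $i\notin K$ by the choice of $i$, so $i\notin M=L\cup K$. The defining degeneracy property \eqref{hddef} of the Hoeffding component $W_J$ then forces $\E[W_J\,|\,\F_M]=0$, so that $\E[W_JV_K\,|\,\F_M]=0$.

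Finally, since $L\subseteq M$, we have $\F_L\subseteq\F_M$, and the tower property yields
\begin{equation*}
\E\bigl[W_JV_K\,\bigl|\,\F_L\bigr]=\E\Bigl[\E\bigl[W_JV_K\,\bigl|\,\F_M\bigr]\,\Bigl|\,\F_L\Bigr]=0,
\end{equation*}
which is the claim. There is essentially no serious obstacle here: the statement is a structural consequence of \eqref{hddef}, and the only care needed is to ensure that the enlarged conditioning $\sigma$-field $\F_{L\cup K}$ is chosen so that (a) one of the two factors becomes measurable and can be extracted, and (b) the remaining factor still violates the inclusion required by degeneracy. The symmetric hypothesis $J\Delta K\not\subseteq L$ guarantees exactly that at least one such choice is available.
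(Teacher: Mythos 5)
Your proof is correct and takes essentially the same route as the paper's: both condition on the enlarged $\sigma$-field $\F_{K\cup L}$, pull out the $\F_{K\cup L}$-measurable factor $V_K$, invoke the degeneracy property \eqref{hddef} since $J\not\subseteq K\cup L$, and conclude via the tower property. Your explicit symmetry reduction (choosing $i\in J\setminus K$ with $i\notin L$, swapping the roles of $W_J$ and $V_K$ otherwise) is just a spelled-out version of the paper's ``Assume e.g. that $J\setminus(K\cup L)\not=\emptyset$.''
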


\begin{proof}
Assume e.g. that $(J\setminus K)\setminus L= J\setminus(K\cup L)\not=\emptyset$. Then, 
\begin{align*}
\E\bigl[W_JV_K\,\bigl|\,\F_L\bigr]&=\E\Bigl[V_K\E\bigl[W_J\,\bigl|\,\F_{K\cup L}\bigr]\,\Bigl|\,\F_L\Bigr]
=\E\bigl[V_K\cdot0\,\bigl|\,\F_L\bigr]=0\,,
\end{align*}
as $\E\bigl[W_J\,\bigl|\,\F_{K\cup L}\bigr]=0$ because $J\not\subseteq K\cup L$.
\end{proof}

\begin{lemma}\label{hdle2}
Let $J\in\D_p$ and $K\in\D_q$, respectively. 
\begin{enumerate}[{\normalfont (a)}]
\item The Hoeffding decomposition of $W_JV_K$ is given by 
\begin{equation}\label{hdwjvk}
W_JV_K=\sum_{\substack{M\subseteq[n]:\\ J\Delta K\subseteq M\subseteq J\cup K}}\sum_{\substack{L\subseteq[n]:\\ J\Delta K\subseteq L\subseteq M}}(-1)^{\abs{M}-\abs{L}}\E\bigl[W_JV_K\,\bigl|\,\F_L\bigr]\,.
\end{equation}
\item If $j\in J\cap K$, then we have the Hoeffding decomposition
\begin{equation}\label{hdwjvkcond}
\E\bigl[W_JV_K\,\bigl|\,\F_{(J\cup K)\setminus\{j\}}\bigr]=\sum_{\substack{M\subseteq[n]:\\ J\Delta K\subseteq M\subseteq (J\cup K)\setminus\{j\}}}\sum_{\substack{L\subseteq[n]:\\ J\Delta K\subseteq L\subseteq M}}(-1)^{\abs{M}-\abs{L}}\E\bigl[W_JV_K\,\bigl|\,\F_L\bigr]\,.
\end{equation}
\end{enumerate}
\end{lemma}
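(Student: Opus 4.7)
The plan is to apply the general Hoeffding inversion formula (as in \eqref{hdw}), valid for any integrable random variable, to both $W_JV_K$ and to the conditional expectation in part (b), and then eliminate the vanishing terms using Lemma \ref{hdle1}.

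For part (a), $W_JV_K\in L^1(\Prob)$ is $\F_{J\cup K}$-measurable, so by uniqueness of the Hoeffding decomposition the component $(W_JV_K)_M$ vanishes whenever $M\not\subseteq J\cup K$. For $M\subseteq J\cup K$, the inversion formula reads
\begin{equation*}
(W_JV_K)_M=\sum_{L\subseteq M}(-1)^{\abs{M}-\abs{L}}\E\bigl[W_JV_K\,\bigl|\,\F_L\bigr].
\end{equation*}
By Lemma \ref{hdle1} the summand vanishes as soon as $J\Delta K\not\subseteq L$, so only indices $L$ with $J\Delta K\subseteq L\subseteq M$ contribute. In particular, if $J\Delta K\not\subseteq M$ the sum is empty and $(W_JV_K)_M=0$. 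Inserting the truncated inner sums into $W_JV_K=\sum_M(W_JV_K)_M$ yields exactly \eqref{hdwjvk}.

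Part (b) applies the same scheme to $Y:=\E[W_JV_K\,|\,\F_{(J\cup K)\setminus\{j\}}]$, which is $\F_{(J\cup K)\setminus\{j\}}$-measurable. The key tower-with-independence identity, which follows from the independence of $X_1,\dotsc,X_n$ together with the fact that $W_JV_K$ depends only on the coordinates in $J\cup K$, reads
\begin{equation*}
\E\bigl[Y\,\bigl|\,\F_L\bigr]=\E\bigl[W_JV_K\,\bigl|\,\F_{L\cap((J\cup K)\setminus\{j\})}\bigr].
\end{equation*}
Since $j\in J\cap K$ one has $j\notin J\Delta K$, and therefore $J\Delta K\subseteq(J\cup K)\setminus\{j\}$. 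Lemma \ref{hdle1} then shows that $\E[Y\,|\,\F_L]$ vanishes unless $J\Delta K\subseteq L$, and for $L\subseteq(J\cup K)\setminus\{j\}$ the right-hand side simplifies to $\E[W_JV_K\,|\,\F_L]$. Repeating the inclusion-exclusion/cancellation argument of part (a) verbatim, with $J\cup K$ replaced by $(J\cup K)\setminus\{j\}$, gives \eqref{hdwjvkcond}.

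The only nontrivial step is the tower-with-independence identity in (b); it is not a consequence of the tower property alone, but of the fact that conditional expectations with respect to $\sigma$-algebras generated by disjoint groups of independent variables can be iterated and collapse to conditioning on the intersection of the index sets. Everything else is routine bookkeeping dictated by Lemma \ref{hdle1}.
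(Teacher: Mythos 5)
Your proof is correct and follows essentially the same route as the paper's: the general inversion formula for the Hoeffding decomposition of an $\F_{J\cup K}$-measurable random variable combined with Lemma \ref{hdle1} for part (a), and the same scheme applied to the conditional expectation for part (b). One small correction to your closing remark: for the only sets $L$ that actually occur in part (b) (namely $L\subseteq M\subseteq (J\cup K)\setminus\{j\}$, so that $\F_L\subseteq\F_{(J\cup K)\setminus\{j\}}$), the identity $\E\bigl[\E[W_JV_K\,|\,\F_{(J\cup K)\setminus\{j\}}]\,\bigl|\,\F_L\bigr]=\E[W_JV_K\,|\,\F_L]$ is a consequence of the tower property alone --- this is exactly the paper's one-line observation --- so your stronger independence-based identity with $\F_{L\cap((J\cup K)\setminus\{j\})}$, while true, is not needed, and the step you flag as nontrivial is in fact routine.
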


\begin{proof}
The claim of (a) follows immediately from Lemma \ref{hdle1} and from the general formula for the Hoeffding decomposition of an $\F_{J\cup K}$-measurable random variable $T$ which is given by 
\[T=\sum_{M\subseteq J\cup K}\Bigl(\sum_{L\subseteq M}(-1)^{\abs{M}-\abs{L}}\E[T|\F_L] \Bigr)\,.\]
The claim of (b) follows similarly upon observing that, for $ L\subseteq (J\cup K)\setminus\{j\}$ we have 
\[\E\Bigl[\E\bigl[W_JV_K\,\bigl|\,\F_{(J\cup K)\setminus\{j\}}\bigr]\,\Bigl|\,\F_L\Bigr]
=\E\bigl[W_JV_K\,\bigl|\,\F_L\bigr]\,.\]
\end{proof}

The next result which might be of independent interest plays a similar role as the product formula for two multiple Wiener-It\^{o} integrals (see e.g. \cite{NouPecbook}).
\begin{theorem}[Product formula for degenerate $U$-statistics]\label{prodform}
Let $1\leq p,q\leq n$ and let $W,V\in L^2(\Prob)$ be $p$- and $q$-degenerate $U$-statistics, respectively, with respective Hoeffding decompositions given by \eqref{hdvw}. 
Then, the Hoeffding decomposition \eqref{hdu} of $U:=VW$ is given by the following formula:
\begin{align*}
VW&=\sum_{\substack{M\subseteq[n]:\\\abs{M}\leq p+q}}\biggl(\sum_{\substack{J\in\D_p,K\in\D_q:\\J\Delta K\subseteq M\subseteq J\cup K}}
\sum_{\substack{L\subseteq[n]:\\ J\Delta K\subseteq L\subseteq M}}(-1)^{\abs{M}-\abs{L}}\E\bigl[W_JV_K\,\bigl|\,\F_L\bigr]\biggr)\\
&=\sum_{\substack{M\subseteq[n]:\\\abs{M}\leq p+q}}\biggl(\sum_{L\subseteq M}(-1)^{\abs{M}-\abs{L}}
\sum_{\substack{J\in\D_p,K\in\D_q:\\J\Delta K\subseteq L,\\
M\subseteq J\cup K}}\E\bigl[W_JV_K\,\bigl|\,\F_L\bigr]\biggr)\,,
\end{align*}
i.e. for $M\subseteq[n]$ with $\abs{M}\leq p+q$ we have 
\begin{align*}
U_M&=\sum_{\substack{J\in\D_p,K\in\D_q:\\J\Delta K\subseteq M\subseteq J\cup K}}
\sum_{\substack{L\subseteq[n]:\\ J\Delta K\subseteq L\subseteq M}}(-1)^{\abs{M}-\abs{L}}\E\bigl[W_JV_K\,\bigl|\,\F_L\bigr]\\
&=\sum_{L\subseteq M}(-1)^{\abs{M}-\abs{L}}
\sum_{\substack{J\in\D_p,K\in\D_q:\\J\Delta K\subseteq L,\\
M\subseteq J\cup K}}\E\bigl[W_JV_K\,\bigl|\,\F_L\bigr]\,.
\end{align*}
\end{theorem}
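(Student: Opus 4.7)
The plan is to reduce the product formula to a double application of Lemma \ref{hdle2}(a) followed by a careful reorganization of the summation indices. Since the Hoeffding decomposition is linear and almost-surely unique (as noted after \eqref{hdw}), it suffices to produce any representation of $VW$ as a sum $\sum_M T_M$ where $T_M$ is $\F_M$-measurable and satisfies the orthogonality $\E[T_M\,|\,\F_L]=0$ whenever $M\nsubseteq L$; then by uniqueness $U_M=T_M$.

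First I would expand using bilinearity of the product: $VW=\sum_{J\in\D_p}\sum_{K\in\D_q}W_JV_K$. For each fixed pair $(J,K)$, Lemma \ref{hdle2}(a) supplies the Hoeffding decomposition of the $\F_{J\cup K}$-measurable random variable $W_JV_K$, namely
\[W_JV_K=\sum_{\substack{M\subseteq[n]:\\ J\Delta K\subseteq M\subseteq J\cup K}}(W_JV_K)_M,\qquad (W_JV_K)_M=\sum_{\substack{L:\,J\Delta K\subseteq L\subseteq M}}(-1)^{\abs{M}-\abs{L}}\E[W_JV_K\,|\,\F_L].\]
The constraint $J\Delta K\subseteq M$ arises from Lemma \ref{hdle1}, which kills all conditional expectations whose conditioning $\sigma$-field does not contain every index in the symmetric difference.

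Next, I would substitute this into the expansion of $VW$ and swap the outer $(J,K)$-sum with the inner $M$-sum. Because each $(W_JV_K)_M$ is a finite sum of conditional expectations of the single summand $W_JV_K$, the interchange is unproblematic and yields the first displayed form of the theorem, with the outer index $M$ ranging over subsets of $[n]$ satisfying $\abs{M}\leq p+q$ (the only constraint that survives since $M\subseteq J\cup K$ forces $\abs{M}\leq p+q$). To obtain the second form, I would further pull the $L$-summation to the outside of the $(J,K)$-summation inside each fixed block: the joint constraints $J\Delta K\subseteq M\subseteq J\cup K$ together with $J\Delta K\subseteq L\subseteq M$ are equivalent to $L\subseteq M$, $J\Delta K\subseteq L$, and $M\subseteq J\cup K$, since $J\Delta K\subseteq L\subseteq M$ automatically gives $J\Delta K\subseteq M$. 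This rewriting is purely bookkeeping.

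Finally, to confirm that the resulting decomposition is indeed the Hoeffding decomposition, I would invoke uniqueness: for each fixed $(J,K)$, $(W_JV_K)_M$ is $\F_M$-measurable and satisfies $\E[(W_JV_K)_M\,|\,\F_{M'}]=0$ for $M\nsubseteq M'$ by construction, hence so does any linear combination of such terms over $(J,K)$. The only real obstacle here is ensuring the index-set manipulation is performed correctly; the rest is a direct consequence of Lemma \ref{hdle2}(a) combined with linearity, and no further analytic input is required.
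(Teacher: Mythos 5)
Your proposal is correct and follows essentially the same route as the paper: expand $VW=\sum_{J\in\D_p,K\in\D_q}W_JV_K$ by bilinearity, apply Lemma \ref{hdle2}(a) to each summand $W_JV_K$, and regroup the sums over $M$ and $L$ (your observation that $J\Delta K\subseteq L\subseteq M$ makes the constraint $J\Delta K\subseteq M$ redundant is exactly the reindexing the paper performs). Your extra explicit verification via uniqueness of the Hoeffding decomposition is what the paper compresses into the phrase ``by the linearity of the Hoeffding decomposition,'' so the two arguments coincide in substance.
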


\begin{proof}
By the linearity of the Hoeffding decomposition and since we have 
\begin{equation*}
 VW=\sum_{J\in\D_p,K\in\D_q}W_JV_K\,,
\end{equation*}
it suffices to collect the terms resulting from the Hoeffding decompositions of the summands $W_JV_K$ in a suitable way. By Lemma \ref{hdle2} (a) we have
\begin{align*}
VW&=\sum_{J\in\D_p,K\in\D_q}W_JV_K=\sum_{J\in\D_p,K\in\D_q}\sum_{\substack{M\subseteq[n]:\\ J\Delta K\subseteq M\subseteq J\cup K}}\sum_{\substack{L\subseteq[n]:\\ J\Delta K\subseteq L\subseteq M}}(-1)^{\abs{M}-\abs{L}}\E\bigl[W_JV_K\,\bigl|\,\F_L\bigr]\\
&=\sum_{\substack{M\subseteq[n]:\\\abs{M}\leq p+q}}\biggl(\sum_{L\subseteq M}(-1)^{\abs{M}-\abs{L}}
\sum_{\substack{J\in\D_p,K\in\D_q:\\J\Delta K\subseteq L,\\
M\subseteq J\cup K}}\E\bigl[W_JV_K\,\bigl|\,\F_L\bigr]\biggr)\,.
\end{align*}
\end{proof}

Now we are in the position to express the Hoeffding decomposition of\\ $\frac{n}{2d}\E\bigl[(W'-W)^2\,\bigl|\,X\bigr]$ in terms of that of $W^2$.
Since we prove a more general result, Lemma \ref{condexmult} below, we do not give its proof, here. 
\begin{lemma}\label{condex}
Let $W^2=\sum_{\abs{M}\leq 2d}U_M$ be the Hoeffding decomposition of $W^2$. Then, we have the Hoeffding decomposition
\begin{align*}
\frac{n}{2d}\E\bigl[(W'-W)^2\,\bigl|\,X\bigr]&=\sum_{\substack{M\subseteq[n]:\\\abs{M}\leq 2d-1}}a_MU_M\,,
\end{align*}
with 
\[a_M=1-\frac{\abs{M}}{2d}\in[0,1]\quad\text{for each}\quad M\subseteq[n]\text{ with }\abs{M}\leq2d\,.\]
\end{lemma}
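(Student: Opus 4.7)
The plan is to recognize that the construction of $(X, X')$ gives rise to a natural resampling operator $T$ on $L^2(\F_{[n]})$, defined for $V = f(X)$ by $TV := \E[V(X') \mid X]$, and that this operator is diagonal in the Hoeffding decomposition. Once the eigenvalue relation is established, the identity of the lemma follows from the elementary expansion
$$\E[(W'-W)^2 \mid X] = T(W^2) - 2W\cdot TW + W^2.$$

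First I would prove that if $V_M$ is the $M$-th Hoeffding component of any $V \in L^2(\F_{[n]})$ (i.e.\ $V_M$ is $\F_M$-measurable and $\E[V_M \mid \F_L] = 0$ whenever $M \nsubseteq L$), then
$$TV_M = \Bigl(1 - \tfrac{|M|}{n}\Bigr) V_M.$$
Conditioning on $\alpha$ uniform on $[n]$, the $n - |M|$ values with $\alpha \notin M$ leave $V_M$ unchanged, while for each $j \in M$ the substitution of $X_j$ by $Y_j$ produces a random variable $V_M^{(j)}$ whose conditional expectation given $X$ equals $\E[V_M \mid \F_{M \setminus \{j\}}]$, which vanishes by degeneracy since $M \nsubseteq M \setminus \{j\}$. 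Averaging gives the claimed eigenvalue.

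Applying this relation componentwise to $W^2 = \sum_M U_M$ yields $T(W^2) = \sum_M (1 - |M|/n) U_M$, while applied to $W = \sum_{J \in \D_d} W_J$ it recovers $TW = (1 - d/n) W$, consistent with Lemma \ref{linreg}. Substituting into the expansion above,
$$\E[(W'-W)^2 \mid X] = \sum_M \bigl[(1 - |M|/n) - (1 - 2d/n)\bigr] U_M = \frac{1}{n}\sum_M (2d - |M|)\, U_M,$$
so that $\frac{n}{2d}\E[(W'-W)^2 \mid X] = \sum_M \bigl(1 - |M|/(2d)\bigr) U_M$. The coefficient vanishes exactly when $|M| = 2d$, truncating the sum to $|M| \leq 2d - 1$. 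Since each $a_M U_M$ is a scalar multiple of a Hoeffding component of $W^2$, it inherits the $\F_M$-measurability and degeneracy property, so the right-hand side is itself already in Hoeffding form.

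The only genuine step in this plan is the operator identity $TV_M = (1 - |M|/n) V_M$, which is essentially immediate from the defining degeneracy of the Hoeffding decomposition. I therefore do not expect a serious obstacle. The appeal of the approach is that it bypasses the combinatorial manipulations with $|J \cap K|$ and with the conditional expectations $\E[W_J W_K \mid \F_{(J \cup K) \setminus \{j\}}]$ that a direct attack via the product formula (Theorem \ref{prodform}) would require, reducing everything to one clean spectral fact about the resampling operator.
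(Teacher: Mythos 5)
Your proof is correct, and it takes a genuinely different route from the paper's. The paper never proves Lemma \ref{condex} directly: it obtains it as the special case $i=k$, $p_i=p_k=d$ of the multivariate Lemma \ref{condexmult}, whose proof expands $(W'-W)^2$ over pairs $J,K$ with $\alpha\in J\cap K$, reduces the surviving conditional expectations to $W_JW_K$ and $\E\bigl[W_JW_K\,\bigl|\,\F_{(J\cup K)\setminus\{j\}}\bigr]$, and then resums via the explicit M\"obius-type formulas of Lemma \ref{hdle2} and the combinatorial identity $\abs{(J\cap K)\setminus M}=\abs{J\cup K}-\abs{M}$ (valid for $J\Delta K\subseteq M\subseteq J\cup K$), in the spirit of the product formula of Theorem \ref{prodform}. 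You replace all of this by a single spectral fact: the one-coordinate resampling operator $T$ acts diagonally on Hoeffding components with eigenvalue $1-\abs{M}/n$. Your verification of that fact is sound (conditioning on $\alpha$, which is independent of $(X,Y)$, and using $\E\bigl[V_M^{(j)}\,\bigl|\,X\bigr]=\E\bigl[V_M\,\bigl|\,\F_{M\setminus\{j\}}\bigr]=0$ for $j\in M$), the algebra $(1-\abs{M}/n)-2(1-d/n)+1=(2d-\abs{M})/n$ is right, and uniqueness of the Hoeffding decomposition justifies your closing remark that the right-hand side is already in Hoeffding form. Two points worth recording: (i) $T$ is well defined on $L^2(\F_{[n]})$ because $X'\stackrel{\D}{=}X$, so almost surely equal representatives $f$ yield almost surely equal $f(X')$; (ii) your argument polarizes with no extra work, since $\E\bigl[(W'(i)-W(i))(W'(k)-W(k))\,\bigl|\,X\bigr]=T\bigl(W(i)W(k)\bigr)-W(k)\,TW(i)-W(i)\,TW(k)+W(i)W(k)$ (the same $\alpha$ and $Y$ are used for all components), and $TW(i)=(1-p_i/n)W(i)$ then gives the coefficient $(p_i+p_k-\abs{M})/n$ of Lemma \ref{condexmult} directly. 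As for trade-offs: the paper's heavier combinatorial route builds infrastructure (Lemma \ref{hdle2}, Theorem \ref{prodform}) that it needs anyway, e.g.\ to identify $U_M=\sum W_JW_K$ for $\abs{M}=2d$ in the proof of Lemma \ref{varlemma}; your route buys brevity and a conceptual reading of the exchangeable pair as a Glauber-type dynamics whose eigenspaces are exactly the Hoeffding strata, in direct analogy with the Ornstein--Uhlenbeck semigroup on Wiener chaos.
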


Before we proceed, let us, following \cite{deJo89} and \cite{deJo90}, introduce the following important classes of quadruples $(J_1,J_2,J_3,J_4)\in\D_d^4$. We call an element $j\in J_1\cup J_2\cup J_3\cup J_4$ a 
\textit{free index}, if it appears in $J_i$ for exactly one $i\in\{1,2,3,4\}$. 
Note that this implies that
\begin{equation}\label{freeind}
 \E\bigl[W_{J_1}W_{J_2}W_{J_3}W_{J_4}\bigr]=0
\end{equation}
by Lemma \ref{hdle} 4).
We say that $(J_1,J_2,J_3,J_4)$ is \textit{bifold}, if each element 
in the union $J_1\cup J_2\cup J_3\cup J_4$ appears in $J_i$ for exactly two values of $i\in\{1,2,3,4\}$, i.e. if
\begin{equation*}
 1_{J_1}+1_{J_2}+1_{J_3}+1_{J_4}=2 \cdot1_{J_1\cup J_2\cup J_3\cup J_4}\,.
\end{equation*}
Let us denote by $\B=\B_d$ the set of all bifold quadruples. 
Among the bifold quadruples, the most important ones are given by the subclass $\mathcal{S}_0$ which is defined by 
\begin{align*}
 \mathcal{S}_0&=\Bigl\{(J,K,L,M)\in\D_d^4\,:\, J\cap K= L\cap M=\emptyset\,,\quad \emptyset\subsetneq J\cap L=J\setminus(J\cap M)\subsetneq J\\
 &\hspace{2cm}\text{and } \emptyset\subsetneq K\cap L=K\setminus(K\cap M)\subsetneq K\Bigr\}\,.
\end{align*}
Further, we denote by $\mathcal{T}=\mathcal{T}_d$ the set of all quadruples $(J_1,J_2,J_3,J_4)\in\D_d^4$ that are neither bifold nor have a free index. This just means that 
\begin{equation*}
 1_{J_1}+1_{J_2}+1_{J_3}+1_{J_4}\geq2 \cdot1_{J_1\cup J_2\cup J_3\cup J_4}
\end{equation*}
and there exists at least one $j\in[n]$ such that
\begin{equation*}
 1_{J_1}(j)+1_{J_2}(j)+1_{J_3}(j)+1_{J_4}(j)\geq3\,,
\end{equation*}
i.e. each element of the union $J_1\cup J_2\cup J_3\cup J_4$ appears in $J_i$ for at least two values of $i\in\{1,2,3,4\}$ and there is an element of the union $J_1\cup J_2\cup J_3\cup J_4$ that appears in $J_i$ for at least three values of $i\in\{1,2,3,4\}$.\\
Following \cite{deJo90} let us define the quantities 
\begin{align*}
S_0&:=\sum_{\substack{J,K,L,M\in\D_d:\\ J\cap K=\emptyset=L\cap M,\\\emptyset\subsetneq J\cap L\subsetneq J\,,\\\emptyset\subsetneq J\cap M\subsetneq J}}\E\bigl[W_JW_KW_LW_M\bigr]
=\sum_{(J,K,L,M)\in\mathcal{S}_0}\E\bigl[W_JW_KW_LW_M\bigr]\,,
\end{align*}
as well as
\begin{equation*}
 \tau:=\tau_d:=\sum_{(J,K,L,M)\in\mathcal{T}}\sigma_J\sigma_K\sigma_L\sigma_M\,.
\end{equation*}
Note that the last identity in the definition of $S_0$ is true by virtue of \eqref{freeind}.
The following result is Proposition 5 (b) of \cite{deJo90}. We will prove a more general version stated as Proposition \ref{s0propgen} to deal with the multivariate case. 

\begin{prop}\label{s0prop}
 We have 
 \begin{equation*}
S_0\geq -\tau\,.
\end{equation*}
\end{prop}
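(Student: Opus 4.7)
The plan is to prove $S_0 \geq -\tau$ via a pointwise Cauchy-Schwarz bound on each summand of $S_0$, exploiting the independence afforded by the disjointness conditions in the definition of $\mathcal{S}_0$, and then to match the resulting $\sigma$-products against the combinatorial structure of $\tau$.

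First, I would fix any $(J,K,L,M) \in \mathcal{S}_0$. The defining disjointness $J \cap K = \emptyset = L \cap M$, together with Lemma \ref{hdle} Part 5, gives $W_J \perp W_K$ and $W_L \perp W_M$, so that $\E[W_J^2 W_K^2] = \sigma_J^2 \sigma_K^2$ and $\E[W_L^2 W_M^2] = \sigma_L^2 \sigma_M^2$. Note that the pair $(W_J W_K)$ and $(W_L W_M)$ are \emph{not} independent, since both depend on the common variables indexed by $A := J \cup K = L \cup M$. Nevertheless, Cauchy-Schwarz applied to this factorization yields
\[
\babs{\E[W_J W_K W_L W_M]} \leq \sqrt{\E[W_J^2 W_K^2]\,\E[W_L^2 W_M^2]} = \sigma_J\sigma_K\sigma_L\sigma_M,
\]
and summing over $\mathcal{S}_0$ produces the intermediate inequality $S_0 \geq -\sum_{(J,K,L,M)\in\mathcal{S}_0} \sigma_J\sigma_K\sigma_L\sigma_M$.

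To bridge this to $\tau$, I would apply AM-GM in the form $\sigma_J\sigma_K\sigma_L\sigma_M \leq \tfrac12(\sigma_J^2\sigma_L^2 + \sigma_K^2\sigma_M^2)$, and then observe that each product $\sigma_J^2\sigma_L^2 = \sigma_J\sigma_L\sigma_J\sigma_L$ is precisely the $\sigma$-product of the ordered quadruple $(J,L,J,L) \in \D_d^4$. The conditions $\emptyset \subsetneq J \cap L \subsetneq J$ from the definition of $\mathcal{S}_0$ force $J \neq L$ and $J \cap L \neq \emptyset$; consequently each element of $J \cap L$ appears four times in $(J,L,J,L)$, so this quadruple is non-bifold with no free index and therefore lies in $\mathcal{T}$. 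The symmetric constraint on $K \cap L$ in the definition of $\mathcal{S}_0$ yields the analogous statement for $(K,M,K,M)$.

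The main obstacle will be the counting step: for a fixed pair $(J,L)$ with $J \neq L$ and $J \cap L \neq \emptyset$, one must quantify the number of extensions $(K,M)$ with $(J,K,L,M) \in \mathcal{S}_0$---these correspond to $2d$-subsets $A \supseteq J \cup L$ together with the forced choices $K = A \setminus J$, $M = A \setminus L$---and verify that the resulting weighted $\sigma$-sum is dominated by $\tau$. Because the naive count $\binom{n - 2d + |J\cap L|}{|J\cap L|}$ can grow with $n$, I expect this step to require either a more refined pairing of terms or the alternative route of expressing $S_0 + \tau$ \emph{directly} as the expectation of an explicitly non-negative quadratic form in the Hoeffding components---for example, a judicious combination of sums of squares of partition sums of the type $P_A := \sum_{J \subseteq A,\, |J|=d} W_J W_{A\setminus J}$ for $|A|=2d$---whose expansion produces the sign pattern $S_0 + \tau \geq 0$ after matching cross-terms against the combinatorial structure of $\mathcal{T}$. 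This latter route should also yield the multivariate generalization announced in Proposition \ref{s0propgen}.
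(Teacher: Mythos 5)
Your first route cannot succeed, and the obstruction is more fundamental than the counting issue you flag at the end. The termwise Cauchy--Schwarz bound $\babs{\E[W_JW_KW_LW_M]}\leq\sigma_J\sigma_K\sigma_L\sigma_M$ is correct, but the resulting quantity $\Sigma_0:=\sum_{(J,K,L,M)\in\mathcal{S}_0}\sigma_J\sigma_K\sigma_L\sigma_M$ is in general of \emph{constant} order in $n$, whereas $\tau\leq C_d\rho_n^2\to0$ by Proposition \ref{taubound}; so the intermediate inequality $S_0\geq-\Sigma_0$, while true, is far too weak, and no refined pairing or counting can repair it. Concretely, in the symmetric case $\sigma_J^2=\binom{n}{d}^{-1}$: for $(J,K,L,M)\in\mathcal{S}_0$ the conditions force $L\cup M=J\cup K$, so there are $\asymp n^{2d}$ quadruples (choose disjoint $J,K$, then $O(1)$ admissible $L$ inside $J\cup K$), each contributing $\binom{n}{d}^{-2}\asymp n^{-2d}$, whence $\Sigma_0\asymp1$ while $\tau=O(1/n)$. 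The point is that $\mathcal{S}_0$ consists of \emph{bifold} quadruples, which by definition never lie in $\mathcal{T}$; the inequality $S_0\geq-\tau$ holds only because of massive cancellation inside $S_0$, which any absolute-value bound discards. (Your AM--GM observation that $(J,L,J,L)\in\mathcal{T}$ is fine as far as it goes, but the multiplicity of the map $(J,K,L,M)\mapsto(J,L)$ grows polynomially in $n$, consistent with the order-of-magnitude mismatch just described.)

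Your alternative route --- exhibiting $S_0$ plus a $\tau$-controlled remainder as a nonnegative quadratic form --- is indeed the paper's strategy, but your specific candidate fails. Expanding $\sum_{\abs{A}=2d}\E[P_A^2]$ with $P_A=\sum_{J\subseteq A,\,\abs{J}=d}W_JW_{A\setminus J}$, the off-diagonal terms are the $\mathcal{S}_0$ contributions, but the diagonal produces $2\sum_{J\cap K=\emptyset}\sigma_J^2\sigma_K^2$, again of order one and corresponding to bifold quadruples $(J,K,J,K)$ with $J\cap K=\emptyset$, which are not in $\mathcal{T}$ and hence cannot be absorbed into $\tau$: this yields only the useless bound $S_0\geq-2$. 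The paper's proof (of the general Proposition \ref{s0propgen}) fixes this with two devices absent from your sketch. First, it groups not by the union $A$ but by the symmetric-difference pattern: for each overlap size $l\in\{1,\dotsc,d-1\}$ and each disjoint pair $(B,B')$ with $\abs{B}=\abs{B'}=d-l$, it squares the sum over all pairs with $J\setminus M=B$, $M\setminus J=B'$; since $l\geq1$, the diagonal terms then correspond to quadruples $(J,M,J,M)$ with $J\cap M\neq\emptyset$, which \emph{do} lie in $\mathcal{T}$ and are dominated by $\tau$ via Lemma \ref{le3dj} and Cauchy--Schwarz. Second, the squared summands are the \emph{conditional} expectations $\E\bigl[W_JW_M\,\bigl|\,\F_{J\Delta M}\bigr]$ rather than the raw products: by the bifold identity of Lemma \ref{le4dj} this conditioning leaves the bifold cross terms equal to the true fourth moments $\E[W_JW_KW_LW_M]$ (so that summing over $l$ recovers exactly $S_0$), while shrinking the diagonal to $\tau$-size via Lemma \ref{le3dj}. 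Without both devices the sum-of-squares expansion leaves an order-one diagonal that destroys the bound, so as it stands your argument has a genuine gap at precisely this step.
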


Recall the definition of the Lindeberg-Feller quantity $\rho=\rho_n$ given in \eqref{rhodef}. Next, we state a substantial improvement of Lemma B in \cite{deJo90}. Indeed, there the upper bound on $\tau$ is of order $\rho$ as compared to the order $\rho^2$ which we obtain.
Its proof is deferred to Section \ref{proofs}.
 \begin{prop}\label{taubound}
  For each $d\in\mathbb{N}$ there is a finite constant $C_d$ which is independent of $n$ such that 
  \begin{equation*}
   \tau=\sum_{(J,K,L,M)\in\mathcal{T}}\sigma_J\sigma_K\sigma_L\sigma_M\leq C_d \rho^2\,.
  \end{equation*}
  Furthermore, we can let $C_2=13$.
 \end{prop}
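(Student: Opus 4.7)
The bound improves de Jong's estimate $\tau=O(\rho)$ by a full factor of $\rho$; the source of this improvement is that, on top of the ``no free index'' condition already used by de Jong, membership in $\mathcal{T}$ requires at least one index to appear in three or more of the four sets. My plan is to isolate this extra piece of structure through a finite type partition followed by an AM--GM argument.

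I will first partition $\mathcal{T}$ into finitely many subclasses according to the \emph{combinatorial type} of the quadruple $(J,K,L,M)$, by which I mean the isomorphism class of the pair $(S,m)$, where $S:=J\cup K\cup L\cup M$ and $m\colon S\to\{2,3,4\}$ records the multiplicity of each index, together with the ordered assignment of each element of $S$ to the four sets. Since $|S|\leq 2d-1$, the number of types is bounded by a function $N_d$ depending only on $d$. Within each type the quadruple is determined by an injective tuple $(i_1,\dots,i_s)\in[n]^s$ with $s=|S|$. I will fix a position $\alpha^\ast$ of multiplicity $\geq 3$, so that the corresponding index $i^\ast=i_{\alpha^\ast}$ lies in at least three of $J,K,L,M$, and then choose a pairing $\{\{A,B\},\{C,D\}\}$ of the four sets such that $A\cap B\ni i^\ast$ (possible because $i^\ast$ lies in at least three of them) and $C\cap D\neq\emptyset$ (possible by the no-free-index property). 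The elementary estimate
\[\sigma_A\sigma_B\sigma_C\sigma_D\leq \tfrac12\bigl(\sigma_A^2\sigma_B^2+\sigma_C^2\sigma_D^2\bigr)\]
then reduces the problem to bounding $\sum\sigma_A^2\sigma_B^2$ over the type; pulling $\sum_{X\in\D_d,\,i^\ast\in X}\sigma_X^2\leq\rho^2$ out of the $A$-summation and using $\sum_{X\in\D_d}\sigma_X^2=1$ (Lemma \ref{hdle}.4) to telescope the remaining sum in $B$ will produce one factor of $\rho^2$. A symmetric treatment of the second summand completes the bound for a single type, and summing over the $N_d$ types gives the claimed $C_d\rho^2$.

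The main obstacle will lie in the telescoping step for those types where the pair $(A,B)$ fails to cover the full union $S$: the positions in $S\setminus(A\cup B)$ are then free parameters for $\sigma_A^2\sigma_B^2$, so that a naive summation over them would introduce an $n$-dependent blowup. To avoid this I will exploit the fact that every index in $S\setminus(A\cup B)$ necessarily lies in $C\cap D$ (again by the absence of free indices), so that the extra sums are controlled by the multiplicity structure of $C$ and $D$; a parallel use of $\sum_{X\ni i}\sigma_X^2\leq\rho^2$ with $i$ in $C\cap D$, or else a crude $\sigma_X\leq\rho$ applied once followed by a second instance of the same argument on the remaining triple product, will absorb these sums into the $\rho^2$ bound. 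The explicit constant $C_2=13$ should emerge by directly enumerating the (few) admissible multiplicity types for $d=2$ and tallying the contributions.
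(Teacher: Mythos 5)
Your finite ``type'' classification is exactly the paper's partition of $\mathcal{T}$ into equivalence classes of shadows, and your counting of types and the bound $|S|\leq 2d-1$ are fine. The genuine gap is the AM--GM step: the inequality $\sigma_A\sigma_B\sigma_C\sigma_D\leq\tfrac12(\sigma_A^2\sigma_B^2+\sigma_C^2\sigma_D^2)$ decouples the pair $(A,B)$ from the positions of $S\setminus(A\cup B)$, which are then summed freely over $[n]$ and contribute a factor of order $n^{|S\setminus(A\cup B)|}$. Concretely, for $d=2$ consider the type $(J,K,L,M)=(\{i,j\},\{i,j\},\{j,k\},\{j,k\})$ with $i,j,k$ distinct; it lies in $\mathcal{T}$ ($j$ has multiplicity four, no free index), and your recipe permits the pairing $\{J,K\},\{L,M\}$ since $j\in J\cap K$ and $L\cap M\neq\emptyset$. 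If $\sigma_{\{1,2\}}^2=1$ and all other $\sigma$'s vanish, then $\rho^2=1$ and the true type sum $\sum\sigma_{ij}^2\sigma_{jk}^2$ is $0$, but your majorant $\tfrac12\sum_{i,j,k}(\sigma_{ij}^4+\sigma_{jk}^4)=2(n-2)$ is unbounded relative to $C_2\rho^2=13$. Your proposed repairs do not close this: after decoupling, the term $\sigma_A^2\sigma_B^2$ contains no factor indexed by $C$ or $D$, so there is nothing on which to apply $\sum_{X\ni i}\sigma_X^2\leq\rho^2$ with $i\in C\cap D$; and the fallback $\sigma_D\leq\rho$ leaves a triple sum $\sum\sigma_A\sigma_B\sigma_C$ that would have to be shown to be $O(\rho)$ --- this is essentially de Jong's original Lemma B route, which only yields the rate $\rho$, and proving the $O(\rho)$ triple-sum bound is precisely the hard point your outline defers. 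Nor can one always rescue the scheme by choosing a pairing in which both pairs cover $S$ (which would make the decoupled sums harmless via Lemma \ref{sigmale}, using that all pairwise intersections are nonempty on $\mathcal{T}$): for $d=4$ the quadruple $J_1=\{x,y,z,w\}$, $J_2=\{x,w,a,b\}$, $J_3=\{y,w,a,c\}$, $J_4=\{z,w,b,c\}$ lies in $\mathcal{T}$, yet each of the three pairings misses an element of $S$ on \emph{both} sides, and a spread-out choice of the $\sigma$'s (e.g.\ uniform on the $4$-subsets of a set of size $m\sim\sqrt{n}$) makes the decoupled majorant blow up relative to $\rho^2$ for every pairing.

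What the paper does instead is never decouple the four factors. In Lemma \ref{shadow} it encodes the variances as a symmetric function $g$ on $[n]^d$ and applies Cauchy--Schwarz \emph{inside} the sums, in variable groupings dictated by the multiplicity structure of the shadow: it treats separately the cases of repeated sets, of every element appearing in at least three sets, and of some element appearing in exactly two sets (where the key combinatorial observation is that there exist distinct $j,k$ with $F_1\not\subseteq F_j\cup F_k$). Each application of Cauchy--Schwarz retains the shared variables, so each extraction of a factor $\rho_n^2$ comes from a sum of $\sigma_X^2$ over sets $X$ containing a fixed index, and the constant $C_d=d!(d-1)!\sum_j\gamma(\mathbb{F}_j)^{-1}$ in \eqref{Cd} (whence $C_2=13$) is assembled from the stabilizers of the shadow classes. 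Some coupled scheme of this kind is the missing idea in your proposal; the AM--GM decoupling cannot deliver the $\rho^2$ rate.
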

 
The next two lemmas will be very useful for what follows.

\begin{lemma}\label{varlemma}
Again, let $W^2=\sum_{\abs{M}\leq 2d}U_M$ denote the Hoeffding decomposition of $W^2$. Then, we have the bound 
\begin{equation*}
 \sum_{\substack{M\subseteq[n]:\\\abs{M}\leq 2d-1}}\Var(U_M) \leq\E\bigl[W^4\bigr]-3+\kappa_d \rho^2\,,
\end{equation*}
where $\kappa_d=C_d+2d$ and $C_d$ is the constant from Proposition \ref{taubound}.
\end{lemma}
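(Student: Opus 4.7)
The plan is to exploit the orthogonality of the Hoeffding decomposition of $W^2$ together with a direct expansion of the top-order component. Since $\E[W^2]=1$, we have $U_\emptyset=1$ and
\[
\sum_{\abs M\leq 2d-1}\Var(U_M)\;=\;\Var(W^2)-\sum_{\abs M=2d}\Var(U_M)\;=\;\bigl(\E[W^4]-1\bigr)-\sum_{\abs M=2d}\Var(U_M),
\]
so proving the claim reduces to the lower bound $\sum_{\abs M=2d}\Var(U_M)\geq 2-\kappa_d\rho^2$.

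Next I would compute $U_M$ explicitly for $\abs M=2d$. Using the product formula (Theorem \ref{prodform}) with $p=q=d$, only pairs $(J,K)\in\D_d^2$ with $J\cap K=\emptyset$ and $J\cup K=M$ contribute, and in that case $J\Delta K=M$ forces $L=M$, so $U_M=\sum_{J\cap K=\emptyset,\,J\cup K=M}W_JW_K$. Squaring and summing over $M$ yields
\[
\sum_{\abs M=2d}\Var(U_M)=\sum_{\substack{J_1,K_1,J_2,K_2\in\D_d\\ J_1\cap K_1=\emptyset=J_2\cap K_2\\ J_1\cup K_1=J_2\cup K_2}}\E\bigl[W_{J_1}W_{K_1}W_{J_2}W_{K_2}\bigr].
\]
The key structural observation, and likely the main obstacle to articulate cleanly, is that the constraints $J_i\cap K_i=\emptyset$ force every index of $J_1\cup K_1\cup J_2\cup K_2$ to appear in \emph{at most} two of the four sets; hence no quadruple from $\mathcal T$ survives, and the only non-vanishing contributions come from bifold quadruples (quadruples with a free index vanish by Lemma \ref{hdle}.\ref{(4)}).

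Among bifold quadruples satisfying the constraints, the pairing $\{12,34\}$ is impossible (it would require $J_1=K_1$, contradicting $J_1\cap K_1=\emptyset$), whereas the pairings $\{13,24\}$ and $\{14,23\}$ each produce the term $\sigma_A^2\sigma_B^2$ summed over disjoint ordered pairs, and the remaining bifold quadruples are precisely those in $\mathcal S_0$. Thus
\[
\sum_{\abs M=2d}\Var(U_M)\;=\;2\!\!\sum_{\substack{A,B\in\D_d\\ A\cap B=\emptyset}}\!\!\sigma_A^2\sigma_B^2\;+\;S_0.
\]

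To finish, I would lower-bound each piece. Using $\sum_{J\in\D_d}\sigma_J^2=1$ (Lemma \ref{hdle}.\ref{(2)}) and the elementary estimate
\[
\sum_{\substack{A,B\in\D_d\\ A\cap B\neq\emptyset}}\sigma_A^2\sigma_B^2\;\leq\;\sum_{j=1}^n\Bigl(\sum_{A\ni j}\sigma_A^2\Bigr)\Bigl(\sum_{B\ni j}\sigma_B^2\Bigr)\cdot\tfrac{1}{\#\text{double counts}}\;\leq\; d\rho^2
\]
(by swapping the order of summation and invoking the definition of $\rho^2$), I get $2\sum_{A\cap B=\emptyset}\sigma_A^2\sigma_B^2\geq 2-2d\rho^2$. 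Combining this with $S_0\geq -\tau\geq -C_d\rho^2$ from Propositions \ref{s0prop} and \ref{taubound} gives
\[
\sum_{\abs M=2d}\Var(U_M)\;\geq\;2-(2d+C_d)\rho^2\;=\;2-\kappa_d\rho^2,
\]
and substituting into the identity of the first paragraph completes the proof. The main technical hurdle is the case analysis of bifold quadruples under the disjointness constraints and the verification that $\mathcal T$-type contributions really do disappear; once this is in place, the rest is bookkeeping plus the already-available estimates on $S_0$ and $\tau$.
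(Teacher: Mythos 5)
Your proposal is correct and takes essentially the same route as the paper: orthogonality of the Hoeffding decomposition reduces the claim to lower-bounding $\sum_{\abs{M}=2d}\Var(U_M)$, the top-order component is $U_M=\sum_{J\cap K=\emptyset,\,J\cup K=M}W_JW_K$, and the resulting quadruple sum splits as $2\sum_{A\cap B=\emptyset}\sigma_A^2\sigma_B^2+S_0\geq 2-2d\rho^2-C_d\rho^2$ via Lemma \ref{sigmale} and Propositions \ref{s0prop} and \ref{taubound}. Your explicit bifold case analysis just spells out the identity the paper states without comment (the paper equivalently drops the constraint $J_1\cup K_1=J_2\cup K_2$, since the extra quadruples have a free index and contribute zero by \eqref{freeind}).
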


\begin{proof}
 We have 
\begin{align}\label{t12}
 \sum_{\substack{M\subseteq[n]:\\\abs{M}\leq 2d-1}}\Var(U_M)&=\Var(W^2)-\sum_{\substack{M\subseteq[n]:\\\abs{M}= 2d}}\Var(U_M)\notag\\
 &=\E\bigl[W^4\bigr]-1-\sum_{\substack{M\subseteq[n]:\\\abs{M}= 2d}}\Var(U_M)\notag\\
 &=\E\bigl[W^4\bigr]-3+2-\sum_{\substack{M\subseteq[n]:\\\abs{M}= 2d}}\E[U_M^2]\notag\\
 &=\E\bigl[W^4\bigr]-3+\Bigl( 2-\sum_{\substack{J,K,L,M\in\D_d:\\ J\cap K=\emptyset=L\cap M}}\E\bigl[W_JW_KW_LW_M\bigr]\Bigr)\,.
\end{align}

For the last equality we have used the fact that for $\abs{M}=2d$ we have 
\begin{equation*}
 U_M=\sum_{\substack{J,K\in\D_d:\\J\cap K=\emptyset,\\ J\cup K=M}}W_JW_K\,.
\end{equation*}
Also, we can write 
\begin{align*}
\sum_{\substack{J,K,L,M\in\D_d:\\ J\cap K=\emptyset=L\cap M}}\E\bigl[W_JW_KW_LW_M\bigr]
&=2\sum_{\substack{J,K\in\D_d:\\ J\cap K=\emptyset}}\E\bigl[W_J^2W_K^2\bigr]+S_0\\
&=2\sum_{\substack{J,K\in\D_d:\\ J\cap K=\emptyset}}\sigma_J^2\sigma_K^2+S_0\\
&=2-2\sum_{\substack{J,K\in\D_d:\\ J\cap K\not=\emptyset}}\sigma_J^2\sigma_K^2+S_0\\
&\geq 2-2d \rho^2+S_0\,,
\end{align*}
where we have used Lemma \ref{sigmale} to obtain the last inequality.
Thus, from \eqref{t12} and Propositions \ref{s0prop} and \ref{taubound} we conlude that 
\begin{align}\label{compt1}
 \sum_{\substack{M\subseteq[n]:\\\abs{M}\leq 2d-1}}\Var(U_M)
 &= \Var(W^2)-\sum_{\substack{M\subseteq[n]:\\\abs{M}= 2d}}\Var(U_M)\\
 &\leq\E\bigl[W^4\bigr]-3+2d\rho^2-S_0\notag\\
 &\leq\E\bigl[W^4\bigr]-3+(2d+C_d)\rho^2\notag\\
 &=\E\bigl[W^4\bigr]-3+\kappa_d \rho^2\notag\,,
\end{align}
which proves the claim.\\
\end{proof}

 Now we are able to bound the first term on the right hand side of \eqref{1dexbound}:

\begin{lemma}\label{1dterm1}
  For the above constructed exchangeable pair we have 
 \begin{equation}\label{bound1}
\Var\Bigl(\frac{n}{2d}\E\bigl[(W'-W)^2\,\bigl|\,X\bigr]\Bigr)\leq \E\bigl[W^4\bigr]-3+\kappa_d\rho^2\,.
\end{equation}
\end{lemma}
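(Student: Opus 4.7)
The plan is to combine Lemma \ref{condex} directly with Lemma \ref{varlemma}, using the fact that distinct Hoeffding components of $W^2$ are orthogonal in $L^2$. By Lemma \ref{condex},
$$\frac{n}{2d}\E\bigl[(W'-W)^2\,\bigl|\,X\bigr] = \sum_{\substack{M\subseteq[n]:\\ \abs{M}\leq 2d-1}} a_M U_M,$$
where $a_M = 1 - \abs{M}/(2d) \in [0,1]$ and the $U_M$ are the Hoeffding components of $W^2$. The key observation is that if $M \neq M'$, say $M \not\subseteq M'$, then by the defining property \eqref{hddef} of a Hoeffding decomposition one has $\E[U_M\,|\,\F_{M'}] = 0$, so conditioning on $\F_{M'}$ kills the cross term $\E[U_M U_{M'}]$. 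Hence the summands in the representation above are mutually uncorrelated.

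Once orthogonality is in place, it follows that
$$\Var\Bigl(\frac{n}{2d}\E\bigl[(W'-W)^2\,\bigl|\,X\bigr]\Bigr) = \sum_{\substack{M\subseteq[n]:\\ \abs{M}\leq 2d-1}} a_M^2\,\Var(U_M) \leq \sum_{\substack{M\subseteq[n]:\\ \abs{M}\leq 2d-1}} \Var(U_M),$$
where the inequality uses $a_M^2 \leq a_M \leq 1$. Applying Lemma \ref{varlemma} to the right-hand side immediately yields the claimed bound $\E[W^4] - 3 + \kappa_d \rho^2$.

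This step amounts to pure bookkeeping: the genuine work has already been absorbed into Lemma \ref{condex}, which via the exact linear regression property of Lemma \ref{linreg} removes the top-chaos term $\abs{M} = 2d$ and identifies the attenuation factors $a_M \in [0,1]$, and into Lemma \ref{varlemma}, which in turn relies on the crucial inequality $S_0 \geq -\tau$ from Proposition \ref{s0prop} together with the improved estimate $\tau \leq C_d \rho^2$ from Proposition \ref{taubound}. Consequently, no substantive obstacle arises at the level of this lemma; the only thing to verify is the orthogonality of distinct $U_M$, which is immediate from the definition.
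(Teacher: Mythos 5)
Your proof is correct and coincides with the paper's own argument: both combine the representation $\frac{n}{2d}\E\bigl[(W'-W)^2\,\bigl|\,X\bigr]=\sum_{\abs{M}\leq 2d-1}a_MU_M$ from Lemma \ref{condex} with the orthogonality of distinct Hoeffding components and the bound $a_M^2\leq1$, then apply Lemma \ref{varlemma}. Your explicit verification of the orthogonality of the $U_M$ via \eqref{hddef} is a detail the paper uses without comment, and it is carried out correctly.
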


\begin{proof}
Using the orthogonality of the summands within the Hoeffding decomposition as well as $a_M\in[0,1]$, $\abs{M}\leq 2d-1$, from Lemma \ref{condex} we obtain that
\begin{align*}
 \Var\Bigl(\frac{n}{2d}\E\bigl[(W'-W)^2\,\bigl|\,X\bigr]\Bigr)&=\Var\biggl(\sum_{\substack{M\subseteq[n]:\\\abs{M}\leq 2d-1}}a_M U_M\biggr)
 =\sum_{\substack{M\subseteq[n]:\\\abs{M}\leq 2d-1}}a_M^2\Var(U_M)\\
 &\leq \sum_{\substack{M\subseteq[n]:\\\abs{M}\leq 2d-1}}\Var(U_M)\\
 &\leq \E\bigl[W^4\bigr]-3+\kappa_d \rho^2\,,
\end{align*}
where the final inequality is by Lemma \ref{varlemma}.\\
\end{proof}

Now, we proceed to bounding the second error term appearing in the bound \eqref{1dexbound} from Theorem \ref{1dimplugin}. The next lemma will be crucial for doing this.

\begin{lemma}\label{remlemma}
 For the above constructed exchangeable pair we have the bound
 \[\frac{n}{4d}\E\bigl[(W'-W)^4\bigr]\leq 2\bigl(\E[W^4]-3\bigr)+3\kappa_d \rho^2\,.\]
\end{lemma}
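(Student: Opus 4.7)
The plan is to apply Lemma~\ref{exlemma} to convert the fourth moment $\E\bigl[(W'-W)^4\bigr]$ into a quantity involving $W^2$ and the conditional second moment, and then exploit the Hoeffding expansions established in Lemmas~\ref{condex} and~\ref{varlemma}.

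Concretely, since Lemma~\ref{linreg} establishes the linear regression property with $\lambda=d/n$, I would invoke Lemma~\ref{exlemma} on the $\sigma$-field $\mathcal{G}=\sigma(X_1,\dotsc,X_n)$ to obtain
$$\frac{n}{4d}\,\E\bigl[(W'-W)^4\bigr]=3\,\E\Bigl[W^2\cdot\frac{n}{2d}\E\bigl[(W'-W)^2\,\bigl|\,X\bigr]\Bigr]-\E\bigl[W^4\bigr].$$
Writing $W^2=\sum_{|M|\leq 2d} U_M$ for the Hoeffding decomposition of $W^2$ and invoking Lemma~\ref{condex}, the inner factor equals $\sum_{|M|\leq 2d-1} a_M U_M$ with $0\leq a_M\leq 1$ and $a_\emptyset=1$. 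Orthogonality of Hoeffding components combined with $U_\emptyset=\E[W^2]=1$ then yields
$$\E\Bigl[W^2\cdot\frac{n}{2d}\E\bigl[(W'-W)^2\,\bigl|\,X\bigr]\Bigr]=\sum_{|M|\leq 2d-1} a_M \E\bigl[U_M^2\bigr]=1+\sum_{\substack{M\neq\emptyset\\ |M|\leq 2d-1}} a_M\,\Var(U_M),$$
which, since $a_M\in[0,1]$, is bounded above by $1+\sum_{|M|\leq 2d-1}\Var(U_M)$.

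Substituting this inequality back would give
$$\frac{n}{4d}\,\E\bigl[(W'-W)^4\bigr]\leq 3-\E\bigl[W^4\bigr]+3\sum_{|M|\leq 2d-1}\Var(U_M),$$
and a final application of Lemma~\ref{varlemma} to bound the remaining sum by $\E[W^4]-3+\kappa_d\rho^2$ would collapse the right-hand side to $2(\E[W^4]-3)+3\kappa_d\rho^2$, which is the claimed bound. I do not anticipate any serious obstacle, since all of the heavy lifting has already been done in the preceding lemmas; the only subtle point is the careful bookkeeping of the $M=\emptyset$ component in the inner product $\E[W^2\cdot V]$, whose contribution of $3$ is precisely what combines with $-\E[W^4]$ and $3\E[W^4]$ to produce the coefficient $2$ in front of $\E[W^4]-3$ appearing in the statement.
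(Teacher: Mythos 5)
Your proposal is correct and follows essentially the same route as the paper's own proof: both apply Lemma~\ref{exlemma} with $\lambda=d/n$ (via Lemma~\ref{linreg}), expand $\frac{n}{2d}\E\bigl[(W'-W)^2\,\bigl|\,X\bigr]$ through Lemma~\ref{condex}, use orthogonality of the Hoeffding components together with $a_M\in[0,1]$ (the $M=\emptyset$ term contributing the $3$), and finish with Lemma~\ref{varlemma}. Your bookkeeping of the empty-set component matches the paper exactly, so there is nothing to fix.
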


\begin{proof}
From Lemmas \ref{linreg}, \ref{exlemma} and \ref{condex} we obtain that 
\begin{align*}
 \frac{n}{4d}\E\bigl[(W'-W)^4\bigr]&=3\E\Bigl[W^2\frac{n}{2d}\E\bigl[(W'-W)^2\,\bigl|\,X\bigr]\Bigr]
-\E\bigl[W^4\bigr]\\
&=3\sum_{\substack{M,N\subseteq[n]:\\\abs{M},\abs{N}\leq 2d}}a_M\E\bigl[U_MU_N\bigr]-\E\bigl[W^4\bigr]\,,
\end{align*}
where we recall that $a_M=1-\frac{\abs{M}}{2d}\in[0,1]$, for all $M\subseteq[n]$ such that $\abs{M}\leq 2d$. Noting that $a_\emptyset U_\emptyset^2=1$, $a_M=0$ whenever $\abs{M}=2d$ and using the orthogonality of the Hoeffding decomposition yield
\begin{align*}
 \frac{n}{4d}\E\bigl[(W'-W)^4\bigr]&=3a_\emptyset U_\emptyset^2-\E\bigl[W^4\bigr]+3\sum_{\substack{M\subseteq[n]:\\1\leq\abs{M}\leq 2d-1}}a_M\Var(U_M)\\
	&\leq 3-\E[W^4]+3\sum_{\substack{M\subseteq[n]:\\1\leq\abs{M}\leq 2d-1}}\Var(U_M)\\
	&\leq 3-\E[W^4]+3\bigl(\E[W^4]-3+\kappa_d \rho^2\bigr)\\
	&=2\bigl(\E[W^4]-3\bigr)+3\kappa_d \rho^2\,,
\end{align*}
where we have used Lemma \ref{varlemma} to obtain the last inequality.\\
\end{proof}

From the fact that 
\begin{equation*}
 \frac{2d}{n}=2\lambda=\E\bigl[(W'-W)^2\bigr]
\end{equation*}
and using the Cauchy-Schwarz inequality we obtain 
\begin{align}\label{t21}
 \frac{1}{3\lambda}\E\babs{W'-W}^3&\leq \frac{n}{3d}\Bigl(\E\bigl[(W'-W)^2\bigr]\Bigr)^{1/2}\Bigl(\E\babs{W'-W}^4\Bigr)^{1/2}\notag\\
 &=\frac{2\sqrt{2}}{3}\Bigl(\frac{n}{4d}\E\bigl[(W'-W)^4\bigr]\Bigr)^{1/2}\,.
\end{align}

Hence, by virtue of Lemma \ref{remlemma} we have 
\begin{equation}\label{t22}
 \frac{1}{3\lambda}\E\babs{W'-W}^3\leq \frac{2\sqrt{2}}{3} \Bigl(2\bigl(\E[W^4]-3\bigr)+3\kappa_d \rho^2\Bigr)^{1/2}\,.
\end{equation}

Theorem \ref{1dmt} now follows from \eqref{1dexbound}, Lemma \ref{1dterm1} and from \eqref{t22}\,.

\section{Proof of the multidimensional theorem}\label{mdim}

\subsection{Stein's method of exchangeable pairs for multivariate normal approximation}\label{mdimex}

Although the exchangeable pairs coupling lies at the heart of univariate normal approximation by Stein's method, it was only in 2008 in \cite{ChaMe08} that the problem of developing an analogous 
technique in the multivariate setting was finally attacked. 
In their work, for a given random vector 
$$W=(W(1),\ldots,W(r))^T,$$ the authors assume the existence of another random vector $$W'=(W'(1),\ldots,W'(r))^T,$$ defined 
on the same probability space $(\Omega,\F,\Prob)$, such that $W'$ has the same distribution as $W$ and such that the linear regression property 
\begin{equation*}
\E\bigl[W'-W\,\bigl|\, W\bigr]=-\lambda W 
\end{equation*}
is satisfied for some positive constant $\lambda$. Under these assumptions the authors prove several theorems which bound the distance 
from $W$ to a standard normal random vector in terms of the pair $(W,W')$.\\
In \cite{ReiRoe09} the authors motivate and investigate the more general linear regression property
\begin{equation}\label{linreggmn}
\E\bigl[W'-W\,\bigl|\, \G\bigr]=-\Lambda W+R\,,
\end{equation}
where now $\Lambda$ is an invertible non-random $r\times r$ matrix, $\G\subseteq\F$ is a sub-$\sigma$-field of $\F$ such that $\sigma(W)\subseteq\G$ and $R=(R(1),\ldots,R(r))^T$ 
is a small remainder term. However, in contrast to \cite{ChaMe08} and to the univariate situation presented in Subsection \ref{1dimex}, in \cite{ReiRoe09} the full strength of the exchangeability 
of the vector $(W,W')$ is needed. Finally, in \cite{Meck09} the two approaches from \cite{ChaMe08} and \cite{ReiRoe09} are combined, allowing for the more general linear regression property from \cite{ReiRoe09}
and using sharper coordinate-free bounds on the solution to the Stein equation similar to those derived in \cite{ChaMe08}. 
The following result, quoted from \cite{Doe12c}, is (a version of) Theorem 3 in \cite{Meck09} but with better constants.

\begin{theorem}\label{meckes}
Let $(W,W')$ be an exchangeable pair of $\R^r$-valued $L^2(\Prob)$ random vectors defined on a probability space $(\Omega,\F,\Prob)$ and let $\G\subseteq\F$ be 
a sub-$\sigma$-field of $\F$ such that $\sigma(W)\subseteq\G$. Suppose there exist a non-random invertible matrix $\Lambda\in\R^{r\times r}$, 
a non-random positive semidefinite matrix $\Sigma$, a $\G$-measurable random vector $R$ and a $\G$-measurable random matrix $S$ such that 
\eqref{linreggmn} and 
\begin{equation}\label{condrm1}
 \E\Bigl[(W'-W)(W'-W)^T\,\Bigl|\,\G\Bigr]=2\Lambda\Sigma +S
\end{equation}
hold true. Finally, denote by $Z$ a centered $r$-dimensional Gaussian vector with covariance matrix $\Sigma$.
\begin{enumerate}[{\normalfont (a)}]
 \item For any $h\in C^3(\R^r)$ such that $\E\bigl[\abs{h(W)}\bigr]<\infty$ and $\E\bigl[\abs{h(Z)}\bigr]<\infty$, 
\begin{align*}
\bigl|\E[h(W)]-\E[h(Z)]\bigr| 
&\leq\Opnorm{\Lambda^{-1}}\Biggl(M_1(h) \E\bigl[\Enorm{R}\bigr]+\frac{1}{4}\tilde{M}_2(h) \E\bigl[\HSnorm{S}\bigr]\\
&\;+\frac{1}{18}M_3(h)\E\bigl[\Enorm{W'-W}^3\bigr]\Biggr)\\
&\leq\Opnorm{\Lambda^{-1}}\Biggl(M_1(h) \E\bigl[\Enorm{R}\bigr]+\frac{\sqrt{r}}{4}M_2(h) \E\bigl[\HSnorm{S}\bigr]\\
&\;+\frac{1}{18}M_3(h)\E\bigl[\Enorm{W'-W}^3\bigr]\Biggr)\,.
\end{align*}

\item If $\Sigma$ is actually positive definite, then for each $h\in C^2(\R^r)$ such that\\ $\E\bigl[\abs{h(W)}\bigr]<\infty$ and $\E\bigl[\abs{h(Z)}\bigr]<\infty$ we have
\begin{align*}
 \bigl|\E[h(W)]-\E[h(Z)]\bigr| 
&\leq M_1(h)\Opnorm{\Lambda^{-1}}\Biggl(\E\bigl[\Enorm{R}\bigr] +\frac{\Opnorm{\Sigma^{-1/2}}}{\sqrt{2\pi}} \E\bigl[\HSnorm{S}\bigr]\Biggr)\\
&\quad+\frac{\sqrt{2\pi}}{24} M_2(h)\Opnorm{\Lambda^{-1}}\Opnorm{\Sigma^{-1/2}}\E\bigl[\Enorm{W'-W}^3\bigr]\,.
\end{align*}
\end{enumerate}
\end{theorem}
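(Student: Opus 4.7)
My strategy is Stein's method of exchangeable pairs for the target $N(0,\Sigma)$, organized into four steps: solve the multivariate Stein equation, apply the exchangeable-pair antisymmetry identity with a carefully chosen vector field, process the error terms into $R$, $S$ and a cubic remainder, and refine for $C^2$ test functions. First, I would exploit that the multivariate Stein operator $\mathcal{A}f(x) := \Tr(\Sigma\,\Hess f(x)) - \langle x,\nabla f(x)\rangle$ is the generator of the Ornstein--Uhlenbeck semigroup $T_t h(x) = \E[h(e^{-t}x + \sqrt{1-e^{-2t}}Z)]$ reversible for $N(0,\Sigma)$, so that $f(x) := -\int_0^\infty[T_t h(x) - \E h(Z)]\,dt$ satisfies $\mathcal{A}f = h - \E h(Z)$. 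Differentiation under the integral combined with $D^kT_t h(x) = e^{-kt}\E[D^kh(e^{-t}x + \sqrt{1-e^{-2t}}Z)]$ delivers the standard bounds $M_k(f)\le M_k(h)/k$ and $\tilde M_2(f)\le\tilde M_2(h)/2$, reducing the problem to estimating $|\E\mathcal{A}f(W)|$.

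\medskip

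The central identity from exchangeability is $\E[\langle W'-W, g(W) + g(W')\rangle] = 0$ for any nice $g:\R^r\to\R^r$, equivalently
$$-2\,\E\bigl[\langle W'-W, g(W)\rangle\bigr] = \E\bigl[\langle W'-W, g(W')-g(W)\rangle\bigr].$$
I would specialize to $g(x) := \Lambda^{-T}\nabla f(x)$, engineered so that the left-hand side regenerates the drift term of $\mathcal{A}f$: conditioning on $\G$ and applying \eqref{linreggmn},
$$-2\,\E\bigl[\langle W'-W, \Lambda^{-T}\nabla f(W)\rangle\bigr] = 2\,\E\bigl[\langle W,\nabla f(W)\rangle\bigr] - 2\,\E\bigl[\langle \Lambda^{-1}R,\nabla f(W)\rangle\bigr].$$
On the right, a first-order Taylor expansion yields $g(W')-g(W) = \Lambda^{-T}\Hess f(W)(W'-W) + \varrho$ with Lagrange remainder $\Enorm{\varrho}\le\tfrac12\Opnorm{\Lambda^{-1}}M_3(f)\Enorm{W'-W}^2$; conditioning on $\G$ and substituting \eqref{condrm1} turns the leading term into $\E[\Tr(\Lambda^{-T}\Hess f(W)(2\Lambda\Sigma+S))]$. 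The pivotal matrix-algebraic step is the identity $\Tr(\Lambda^{-T}H\Lambda\Sigma) = \Tr(H\Sigma)$ for symmetric $H$, which follows from cyclicity and $\Sigma=\Sigma^T$ provided $\Lambda\Sigma$ is itself symmetric; any skew-symmetric mismatch in the general case is absorbed into $S$ via the symmetry constraint on the LHS of \eqref{condrm1}. Rearranging produces the master identity
$$\E\mathcal{A}f(W) = \E\bigl[\langle \Lambda^{-1}R,\nabla f(W)\rangle\bigr] + \tfrac12\E\bigl[\Tr(\Lambda^{-T}\Hess f(W)\,S)\bigr] + \tfrac12\E\bigl[\langle W'-W,\varrho\rangle\bigr].$$
Bounding each term by $\Opnorm{\Lambda^{-1}}M_1(f)\E\Enorm{R}$, by $\tfrac12\Opnorm{\Lambda^{-1}}\tilde M_2(f)\E\HSnorm{S}$ (using $|\Tr(A B)|\le\Opnorm{A}\HSnorm{B}\HSnorm{\cdot}$-type estimates) and by a multiple of $\Opnorm{\Lambda^{-1}}M_3(f)\E\Enorm{W'-W}^3$ respectively, and finally inserting the semigroup derivative bounds $M_k(f)\le M_k(h)/k$ and $\tilde M_2(f)\le\tilde M_2(h)/2$, delivers the bound in (a) after careful tracking of the numerical constants.

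\medskip

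For part (b), where $h\in C^2$ but not $C^3$, the estimate $M_3(f)\le M_3(h)/3$ is no longer available. The remedy is Gaussian integration-by-parts inside the semigroup representation: in $D^3T_th(x)$, one derivative on $h$ can be transferred onto the Gaussian density of $\sqrt{1-e^{-2t}}Z$ in exchange for a factor $\Opnorm{\Sigma^{-1/2}}/\sqrt{1-e^{-2t}}$, and since the resulting weight $e^{-3t}/\sqrt{1-e^{-2t}}$ is integrable on $(0,\infty)$, one obtains $M_3(f)\le c\,\Opnorm{\Sigma^{-1/2}}M_2(h)$ with an explicit constant (producing the $\sqrt{2\pi}$); an analogous trick upgrades the $\tilde M_2$ contribution into one featuring $M_1(h)\Opnorm{\Sigma^{-1/2}}/\sqrt{2\pi}$. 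Plugging these into the bound of (a) yields (b). The main obstacle I foresee is the matrix-algebraic bookkeeping around the trace identity, together with tracking the Taylor and semigroup constants precisely enough to recover the announced coefficients ($1/4$ and $1/18$ in (a), $1/\sqrt{2\pi}$ and $\sqrt{2\pi}/24$ in (b)) rather than looser numerical values.
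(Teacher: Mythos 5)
First, note that the paper itself contains no proof of Theorem \ref{meckes}: it is explicitly quoted from \cite{Doe12c} as ``(a version of) Theorem 3 in \cite{Meck09} but with better constants''. Measured against that source, your proposal reconstructs essentially the right proof: the Mehler/Ornstein--Uhlenbeck representation of the Stein solution $f$ with the bounds $M_k(f)\leq M_k(h)/k$ and $\tilde{M}_2(f)\leq \tilde{M}_2(h)/2$, the antisymmetry identity $\E\bigl[\langle \Lambda^{-1}(W'-W),\nabla f(W')+\nabla f(W)\rangle\bigr]=0$ combined with \eqref{linreggmn} and \eqref{condrm1}, and, for part (b), Gaussian integration by parts inside the semigroup to trade one derivative for a factor $\Opnorm{\Sigma^{-1/2}}$ (this is exactly how the $\sqrt{2/\pi}$ and $\sqrt{2\pi}/4$ smoothing constants arise in \cite{Meck09}). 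Your handling of the trace step is also correct, and can be made exact rather than heuristic: since the left-hand side of \eqref{condrm1} is a.s.\ symmetric, one has the identity $\Tr(\Sigma H)-\tfrac12\Tr\bigl(\Lambda^{-T}H(2\Lambda\Sigma+S)\bigr)=-\tfrac12\Tr\bigl(\Lambda^{-T}HS^{T}\bigr)$ for symmetric $H$ (use $(2\Lambda\Sigma+S)^{T}=2\Lambda\Sigma+S$ and cyclicity), so the residual is bounded by $\tfrac12\Opnorm{\Lambda^{-1}}\HSnorm{H}\HSnorm{S}$ whether or not $\Lambda\Sigma$ is symmetric; with $\HSnorm{\Hess f}\leq\tilde{M}_2(h)/2$ this gives precisely the coefficient $\tfrac14$.

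The one genuine gap is the cubic constant, and you half-anticipated it. With the one-sided Lagrange remainder $\Enorm{\varrho}\leq\tfrac12\Opnorm{\Lambda^{-1}}M_3(f)\Enorm{W'-W}^2$ your scheme produces the coefficient $\tfrac12\cdot\tfrac12\cdot\tfrac13=\tfrac1{12}$ in (a), not the stated $\tfrac1{18}$ (and analogously $\sqrt{2\pi}/16$ rather than $\sqrt{2\pi}/24$ in (b)); this is presumably also why \cite{Meck09} only has $\tfrac19$ there. The missing idea is a second use of exchangeability to symmetrize the Taylor step: write $\nabla f(W')-\nabla f(W)=\tfrac12\bigl(\Hess f(W)+\Hess f(W')\bigr)(W'-W)+r$, where comparing $\Hess f\bigl(W+t(W'-W)\bigr)$ with the linear interpolant gives $\Enorm{r}\leq M_3(f)\Enorm{W'-W}^2\int_0^1 2t(1-t)\,dt=\tfrac13 M_3(f)\Enorm{W'-W}^2$, and then observe that $(W'-W)^{T}\Lambda^{-T}\Hess f(W')(W'-W)$ is invariant under swapping $W\leftrightarrow W'$, so $\E\bigl[(W'-W)^{T}\Lambda^{-T}\Hess f(W')(W'-W)\bigr]=\E\bigl[(W'-W)^{T}\Lambda^{-T}\Hess f(W)(W'-W)\bigr]$ and the averaged Hessian collapses to the same main term as before. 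This yields $\tfrac12\cdot\tfrac13\cdot\tfrac13=\tfrac1{18}$, and, combined with $M_3(f)\leq\tfrac{\sqrt{2\pi}}{4}\Opnorm{\Sigma^{-1/2}}M_2(h)$, the coefficient $\sqrt{2\pi}/24$ in (b). Without this step your argument still proves the theorem with slightly larger constants, which would suffice for every application in the paper, but it does not prove the statement as written.
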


\subsection{Proof of Theorem \ref{mdmt}}\label{proofmd}
Recall the notation and assumptions from Subsection \ref{intro2}. Starting from the random vector $W=(W(1),\dotsc,W(r))^T$ we will construct another vector 
\begin{equation*}
W':=(W'(1),\dotsc,W'(r))^T
\end{equation*} 
such that $(W,W')$ is an exchangeable pair in the following way:
For each $1\leq i\leq r$ we construct $W'(i)$ in the same way as we did in the one-dimensional situation treated in Subsection \ref{proof1d} and from the same independent copy $Y=(Y_1,\dotsc,Y_n)$ of $X=(X_1,\dotsc,X_n)$ 
and the same $\alpha$ which is independent of $(X,Y)$ and uniformly distributed on $[n]$. We will apply Theorem \ref{meckes} with $\Sigma=\V$ and $\G=\sigma(X_1,\dotsc,X_n)$.

\begin{lemma}\label{linregmv}
With the above definitions and notation we have 
\begin{equation*}
 \E\bigl[W'-W\,\bigl|\,X\bigr]=-\Lambda W\,,
\end{equation*}
where the matrix $\Lambda$ is given by $\Lambda=\diag\Bigl(\frac{p_1}{n},\dotsc,\frac{p_r}{n}\Bigr)$.
\end{lemma}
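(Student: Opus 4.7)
The plan is to observe that this is nothing more than a componentwise application of the one-dimensional linear regression identity already established as Lemma \ref{linreg}. The crucial structural point is that, although $W(1),\ldots,W(r)$ are distinct degenerate $U$-statistics of possibly different orders, the vector $W'$ is built by applying the \emph{same} coupling to every coordinate simultaneously: one picks a single uniform index $\alpha\in[n]$, replaces $X_\alpha$ by the independent copy $Y_\alpha$, and then evaluates each $f^{(i)}$ at the resulting vector $X'$. In particular, conditional on $X$, the only randomness entering $W'(i)-W(i)$ is that of $\alpha$ and of $Y_\alpha$, and this randomness is common to all coordinates.

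Concretely, I would fix $i\in\{1,\dots,r\}$ and note that $W(i)$ is a degenerate $U$-statistic of order $p_i$ with Hoeffding decomposition $W(i)=\sum_{J\in\D_{p_i}}W_J(i)$, while $W'(i)$ is obtained from it by exactly the swap-at-position-$\alpha$ construction used in Subsection \ref{proof1d}. Hence Lemma \ref{linreg}, applied with $d=p_i$, yields
\begin{equation*}
\E\bigl[W'(i)-W(i)\,\bigl|\,X\bigr]=-\frac{p_i}{n}W(i)\,.
\end{equation*}
Assembling these $r$ identities into a single vector identity gives $\E[W'-W\mid X]=-\Lambda W$ with $\Lambda=\diag(p_1/n,\dots,p_r/n)$, which is the claim.

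There is essentially no obstacle here: all the work is done by Lemma \ref{linreg}, which already used the conditioning on the full vector $X$ (so no tower-property argument is needed to pass from conditioning on $W(i)$ to conditioning on $X$). The only point worth emphasizing explicitly in the write-up is that the use of a \emph{single} $\alpha$ across all coordinates is what produces a \emph{diagonal} regression matrix $\Lambda$; had one used independent indices $\alpha_i$ per coordinate, the pair $(W,W')$ would no longer be exchangeable as a whole, and the coupling structure needed later for Theorem \ref{meckes} would be lost. Thus the lemma is best viewed as a bookkeeping statement that records the diagonal-coupling property of this construction before turning to the more substantial calculations of $\E[(W'-W)(W'-W)^T\mid X]$ in the subsequent steps.
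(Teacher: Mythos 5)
Your proof is correct and matches the paper's own argument, which likewise deduces the lemma immediately from the componentwise application of Lemma \ref{linreg} with $d=p_i$ (the paper construction indeed uses the same $\alpha$ and the same independent copy $Y$ for all coordinates, exactly as you observe). Your additional remark on why a single shared $\alpha$ is needed for the joint exchangeability of $(W,W')$ is a useful clarification but does not change the route.
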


\begin{proof}
 This follows immediately from Lemma \ref{linreg}.
\end{proof}

Hence, we obtain that 
\begin{equation}\label{oplainv}
 \Opnorm{\Lambda^{-1}}=\max_{i=1,\dotsc,r}\frac{n}{p_i}=\frac{n}{p_1}\,.
\end{equation}

Let us define the random matrix $S=(S_{i,k})_{1\leq i,k\leq r}$ by the relation
\begin{equation}\label{defs}
 \E\bigl[(W'-W)(W'-W)^T\,\bigl|\,X\bigr]=2\Lambda \V+S\,.
\end{equation}
From Lemma \ref{linregmv} and the fact that $v_{i,k}=0$ unless $p_i=p_k$ we easily conclude that $S$ is symmetric. 
Also, using exchangeability, it is readily checked that 
\begin{equation}\label{means}
\E\bigl[S\bigr]=\E\bigl[(W'-W)(W'-W)^T\bigr]-2\Lambda \V=0\,.
\end{equation}

\begin{lemma}\label{condexmult}
Let $1\leq i\leq k\leq r$ and let 
\[W(i)W(k)=\sum_{\substack{M\subseteq[n]:\\\abs{M}\leq p_i+p_k}}U_M(i,k)\]
be the Hoeffding decomposition of $W(i)W(k)$. Then, we have the Hoeffding decomposition  
\begin{equation*}
 n\E\bigl[(W'(i)-W(i))(W'(k)-W(k))\,\bigl|\,X\bigr]=\sum_{\substack{M\subseteq[n]:\\\abs{M}\leq p_i+p_k-1}}a_M(i,k)U_M(i,k)\,,
\end{equation*}
where 
\begin{equation*}
 a_M(i,k)=p_i+p_k-\abs{M}\,.
\end{equation*}
\end{lemma}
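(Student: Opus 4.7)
The plan is to first reduce the conditional expectation to a manageable sum over pairs $(J,K) \in \D_{p_i} \times \D_{p_k}$ sharing at least one index, and then match the resulting terms against the Hoeffding decomposition of $W(i)W(k)$ provided by Theorem \ref{prodform}.

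First, since $\alpha$ is uniform on $[n]$ and independent of $(X,Y)$, conditioning on $\{\alpha = j\}$ yields
\begin{align*}
n\E\bigl[(W'(i)-W(i))(W'(k)-W(k))\,\bigl|\,X\bigr]
=\sum_{j=1}^{n}\sum_{\substack{J\in\D_{p_i}:j\in J\\ K\in\D_{p_k}:j\in K}}\E\Bigl[\bigl(W_J^{(j)}(i)-W_J(i)\bigr)\bigl(W_K^{(j)}(k)-W_K(k)\bigr)\,\Bigl|\,X\Bigr].
\end{align*}
For fixed $j \in J \cap K$, the random variables $W_J^{(j)}(i)$ and $W_K^{(j)}(k)$ depend on $X$ only through $X_l$, $l \in (J \cup K)\setminus\{j\}$, and on the shared replacement $Y_j$. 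Integrating out $Y_j$, the cross terms vanish by degeneracy: $\E[W_J^{(j)}(i)\mid X] = \E[W_J(i)\mid \F_{J\setminus\{j\}}] = 0$ and likewise for $W_K^{(j)}(k)$, while the pure product term gives $\E[W_J(i)W_K(k)\mid \F_{(J\cup K)\setminus\{j\}}]$. Therefore
\begin{align*}
n\E\bigl[(W'(i)-W(i))(W'(k)-W(k))\,\bigl|\,X\bigr]
=\sum_{\substack{J,K:\\J\cap K\neq\emptyset}}\biggl(|J\cap K|\,W_J(i)W_K(k)+\sum_{j\in J\cap K}\E\bigl[W_J(i)W_K(k)\,\bigl|\,\F_{(J\cup K)\setminus\{j\}}\bigr]\biggr).
\end{align*}

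Next, I would pass to the Hoeffding decomposition. By Lemma \ref{hdle2}(a), the $M$-component of $W_J(i)W_K(k)$ is non-zero only for $J \Delta K \subseteq M \subseteq J \cup K$, in which case it equals $\sum_{L:J\Delta K \subseteq L \subseteq M}(-1)^{|M|-|L|}\E[W_J(i)W_K(k)\mid \F_L]$. By Lemma \ref{hdle2}(b), the $M$-component of $\E[W_J(i)W_K(k)\mid \F_{(J\cup K)\setminus\{j\}}]$ is the same sum but with the extra constraint $j \notin M$. So collecting $M$-components for fixed $(J,K)$ with $J\Delta K \subseteq M \subseteq J\cup K$, the combined coefficient is
\begin{align*}
|J\cap K|+\bigl|(J\cap K)\setminus M\bigr|= 2|J\cap K|-|J\cap K\cap M|.
\end{align*}
The key combinatorial step is that, because $J\cup K = (J\Delta K)\sqcup (J\cap K)$ and $J\Delta K\subseteq M\subseteq J\cup K$, one has $|J\cap K\cap M|=|M|-|J\Delta K|$, so the coefficient simplifies to $|J|+|K|-|M|=p_i+p_k-|M|$, independently of the particular pair $(J,K)$.

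Finally, I observe that for $|M|\le p_i+p_k-1$, the constraint $J\Delta K\subseteq M\subseteq J\cup K$ automatically forces $J\cap K\neq\emptyset$ (otherwise $J\Delta K=J\cup K$ and $M=J\cup K$, giving $|M|=p_i+p_k$), so no term is lost by restricting to $J\cap K\neq\emptyset$. Summing over all valid $(J,K,L)$ and invoking Theorem \ref{prodform} identifies the bracketed quantity with $U_M(i,k)$, giving
\begin{align*}
n\E\bigl[(W'(i)-W(i))(W'(k)-W(k))\,\bigl|\,X\bigr]=\sum_{\substack{M\subseteq[n]:\\|M|\le p_i+p_k-1}}(p_i+p_k-|M|)\,U_M(i,k),
\end{align*}
which is the claimed Hoeffding decomposition (the $|M|=p_i+p_k$ term drops out since the coefficient vanishes there). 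The main obstacle is the bookkeeping in matching the two-layer sums arising from Lemma \ref{hdle2}(a) and (b) so that the combinatorial identity $|J\cap K|+|(J\cap K)\setminus M|=p_i+p_k-|M|$ can be applied uniformly; everything else is a direct consequence of degeneracy and the independence of $Y$ and $\alpha$ from $X$.
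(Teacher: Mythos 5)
Your proposal is correct and follows essentially the same route as the paper: the same conditional-expectation computation for the exchangeable pair (with the cross terms killed by degeneracy), the same use of Lemma \ref{hdle2}(a) and (b) to extract $M$-components, the same key combinatorial identity (your $\abs{J\cap K\cap M}=\abs{M}-\abs{J\Delta K}$ is equivalent to the paper's $\abs{(J\cap K)\setminus M}=\abs{J\cup K}-\abs{M}$), and the same identification with $U_M(i,k)$ via Theorem \ref{prodform}. Your explicit observation that $J\Delta K\subseteq M\subseteq J\cup K$ with $\abs{M}\leq p_i+p_k-1$ forces $J\cap K\not=\emptyset$ is exactly the justification the paper uses implicitly when dropping that restriction in the final chain of equalities.
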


\begin{proof}
 First note that we have the representation 
 \begin{align*}
 &\quad (W'(i)-W(i))(W'(k)-W(k))\\
 &=\sum_{j=1}^n 1_{\{\alpha=j\}}\sum_{\substack{J\in\D_{p_i}, K\in\D_{p_k}:\\j\in J\cap K}}(W_J'(i)-W_J(i))(W_K'(k)-W_K(k))\\
 &=\sum_{j=1}^n 1_{\{\alpha=j\}}\sum_{\substack{J\in\D_{p_i}, K\in\D_{p_k}:\\j\in J\cap K}}\Bigl(W_J^{(j)}(i)W_K^{(j)}(k)+W_J(i)W_K(k)-W_J(i)W_K^{(j)}(k)\\
 &\hspace{5cm}-W_J^{(j)}(i)W_K(k)\Bigr)
 \end{align*}
which implies that 
\begin{align*}
 &n\E\bigl[(W'(i)-W(i))(W'(k)-W(k))\,\bigl|\,X\bigr]\\
 &=\sum_{j=1}^n\sum_{\substack{J\in\D_{p_i}, K\in\D_{p_k}:\\j\in J\cap K}}\Bigl(\E\bigl[W_J(i)W_K(k)\,\bigl|\,\F_{(J\cup K)\setminus\{j\}}\bigr]+W_J(i)W_K(k)\Bigr)\\
 &=\sum_{\substack{J\in\D_{p_i}, K\in\D_{p_k}:\\ J\cap K\not=\emptyset}}\abs{J\cap K}W_J(i)W_K(k)
 +\sum_{j=1}^n\sum_{\substack{J\in\D_{p_i}, K\in\D_{p_k}:\\j\in J\cap K}}\E\bigl[W_J(i)W_K(k)\,\bigl|\,\F_{(J\cup K)\setminus\{j\}}\bigr]
 \end{align*}

 Using Lemma \ref{hdle2} (b) we have 
\begin{align}\label{cem1}
 &\sum_{j=1}^n\sum_{\substack{J\in\D_{p_i}, K\in\D_{p_k}:\\j\in J\cap K}}\E\bigl[W_J(i)W_K(k)\,\bigl|\,\F_{(J\cup K)\setminus\{j\}}\bigr]\notag\\
 &=\sum_{j=1}^n\sum_{\substack{J\in\D_{p_i}, K\in\D_{p_k}:\\j\in J\cap K}}\sum_{\substack{M\subseteq[n]:\\J\Delta K\subseteq M\subseteq (J\cup K)\setminus\{j\}}}
 \sum_{\substack{L\subseteq[n]:\\J\Delta K\subseteq L\subseteq M}}(-1)^{\abs{M}-\abs{L}}\E\bigl[W_J(i)W_K(k)\,\bigl|\,\F_L\bigr]\notag\\
 &=\sum_{\substack{J\in\D_{p_i}, K\in\D_{p_k}:\\ J\cap K\not=\emptyset}}\sum_{j\in J\cap K}\sum_{\substack{M\subseteq[n]:\\J\Delta K\subseteq M\subseteq (J\cup K)\setminus\{j\}}}
 \sum_{\substack{L\subseteq[n]:\\J\Delta K\subseteq L\subseteq M}}(-1)^{\abs{M}-\abs{L}}\E\bigl[W_J(i)W_K(k)\,\bigl|\,\F_L\bigr]\notag\\
 &=\sum_{\substack{J\in\D_{p_i}, K\in\D_{p_k}:\\ J\cap K\not=\emptyset}}\sum_{\substack{M\subseteq[n]:\\J\Delta K\subseteq M\subsetneq J\cup K}}\bigl(\abs{J\cup K}-\abs{M}\bigr)
 \sum_{\substack{L\subseteq[n]:\\J\Delta K\subseteq L\subseteq M}}(-1)^{\abs{M}-\abs{L}}\E\bigl[W_J(i)W_K(k)\,\bigl|\,\F_L\bigr]\notag\\
 &=\sum_{\substack{J\in\D_{p_i}, K\in\D_{p_k}:\\ J\cap K\not=\emptyset}}\sum_{\substack{M\subseteq[n]:\\J\Delta K\subseteq M\subseteq J\cup K}}\bigl(\abs{J\cup K}-\abs{M}\bigr)
 \sum_{\substack{L\subseteq[n]:\\J\Delta K\subseteq L\subseteq M}}(-1)^{\abs{M}-\abs{L}}\E\bigl[W_J(i)W_K(k)\,\bigl|\,\F_L\bigr]
 \end{align}
 Note that for the third equality we have used the crucial fact that 
\begin{equation*}
 (J\cup K)\setminus M=(J\cap K)\setminus M\,,\quad\text{whenever } J\Delta K\subseteq M
\end{equation*}
which implies that 
\begin{equation*}
 \abs{(J\cap K)\setminus M}=\abs{J\cup K}-\abs{M}\quad\text{for }J\Delta K\subseteq M\subseteq J\cup K\,.
\end{equation*}
Also, from Lemma \ref{hdle2} (a) we obtain that 
\begin{align}\label{cem2}
&\sum_{\substack{J\in\D_{p_i}, K\in\D_{p_k}:\\ J\cap K\not=\emptyset}}\abs{J\cap K}\sum_{\substack{M\subseteq[n]:\\J\Delta K\subseteq M\subseteq J\cup K}}
 \sum_{\substack{L\subseteq[n]:\\J\Delta K\subseteq L\subseteq M}}(-1)^{\abs{M}-\abs{L}}\E\bigl[W_J(i)W_K(k)\,\bigl|\,\F_L\bigr]\notag\\ 
 &=\sum_{\substack{J\in\D_{p_i}, K\in\D_{p_k}:\\ J\cap K\not=\emptyset}}\abs{J\cap K}\sum_{\substack{M\subseteq[n]:\\J\Delta K\subseteq M\subseteq J\cup K}}
 \sum_{\substack{L\subseteq[n]:\\J\Delta K\subseteq L\subseteq M}}(-1)^{\abs{M}-\abs{L}}\E\bigl[W_J(i)W_K(k)\,\bigl|\,\F_L\bigr]\notag\\
 &=\sum_{\substack{J\in\D_{p_i}, K\in\D_{p_k}:\\ J\cap K\not=\emptyset}}\sum_{\substack{M\subseteq[n]:\\J\Delta K\subseteq M\subseteq J\cup K}}\abs{J\cap K}
 \sum_{\substack{L\subseteq[n]:\\J\Delta K\subseteq L\subseteq M}}(-1)^{\abs{M}-\abs{L}}\E\bigl[W_J(i)W_K(k)\,\bigl|\,\F_L\bigr]
\end{align}

Combining \eqref{cem1} and \eqref{cem2} we thus have 
\begin{align*}
 &n\E\bigl[(W'(i)-W(i))(W'(k)-W(k))\,\bigl|\,X\bigr]\\
 &=\sum_{\substack{J\in\D_{p_i}, K\in\D_{p_k}:\\ J\cap K\not=\emptyset}}\sum_{\substack{M\subseteq[n]:\\J\Delta K\subseteq M\subseteq J\cup K}}\bigl(\abs{J\cup K}+\abs{J\cap K}-\abs{M}\bigr)\\
 &\hspace{3cm}\sum_{\substack{L\subseteq[n]:\\J\Delta K\subseteq L\subseteq M}}(-1)^{\abs{M}-\abs{L}}\E\bigl[W_J(i)W_K(k)\,\bigl|\,\F_L\bigr]\\
 &=\sum_{\substack{J\in\D_{p_i}, K\in\D_{p_k}:\\ J\cap K\not=\emptyset}}\sum_{\substack{M\subseteq[n]:\\J\Delta K\subseteq M\subseteq J\cup K}}\bigl(p_i+p_k-\abs{M}\bigr)
 \sum_{\substack{L\subseteq[n]:\\J\Delta K\subseteq L\subseteq M}}(-1)^{\abs{M}-\abs{L}}\E\bigl[W_J(i)W_K(k)\,\bigl|\,\F_L\bigr]\\
 &=\sum_{\substack{M\subseteq[n]:\\\abs{M}\leq p_i+p_k-1}}\bigl(p_i+p_k-\abs{M}\bigr)\sum_{\substack{J\in\D_{p_i}, K\in\D_{p_k}:\\ J\cap K\not=\emptyset,\\J\Delta K\subseteq M\subseteq J\cup K }}
 \sum_{\substack{L\subseteq[n]:\\J\Delta K\subseteq L\subseteq M}}(-1)^{\abs{M}-\abs{L}}\E\bigl[W_J(i)W_K(k)\,\bigl|\,\F_L\bigr]\\
 &=\sum_{\substack{M\subseteq[n]:\\\abs{M}\leq p_i+p_k-1}}\bigl(p_i+p_k-\abs{M}\bigr)\sum_{\substack{J\in\D_{p_i}, K\in\D_{p_k}:\\ J\Delta K\subseteq M\subseteq J\cup K }}
 \sum_{\substack{L\subseteq[n]:\\J\Delta K\subseteq L\subseteq M}}(-1)^{\abs{M}-\abs{L}}\E\bigl[W_J(i)W_K(k)\,\bigl|\,\F_L\bigr]\\
 &=\sum_{\substack{M\subseteq[n]:\\\abs{M}\leq p_i+p_k-1}}\bigl(p_i+p_k-\abs{M}\bigr)U_M(i,k)\,,
\end{align*}
as claimed. 
\end{proof}

Since $S$ is centered, from \eqref{defs} and Lemma \ref{condexmult} we obtain that 
\begin{align}\label{mv1}
 n^2\E\bigl[S_{i,k}^2\bigr]&=\Var\bigl(nS_{i,k}\bigr)=\Var\Bigl(n\E\bigl[(W'(i)-W(i))(W'(k)-W(k))\,\bigl|\,X\bigr]\Bigr)\notag\\
 &=\sum_{\substack{M\subseteq[n]:\\\abs{M}\leq p_i+p_k-1}}\bigl(p_i+p_k-\abs{M}\bigr)^2\Var\bigl(U_M(i,k)\bigr)\notag\\
 &\leq(p_i+p_k)^2\sum_{\substack{M\subseteq[n]:\\\abs{M}\leq p_i+p_k-1}}\Var\bigl(U_M(i,k)\bigr)\notag\\
 &=(p_i+p_k)^2\Bigl(\Var(W(i)W(k))-\sum_{\substack{M\subseteq[n]:\\\abs{M}= p_i+p_k}}\E\bigl[U_M(i,k)^2\bigr]\Bigr)\notag\\
 &=(p_i+p_k)^2\Bigl(\Var(W(i)W(k))\notag\\
 &\hspace{3cm}-\sum_{\substack{J,L\in\D_{p_i},K,M\in\D_{p_k}:\\J\cap K=L\cap M=\emptyset}}\E\bigl[W_J(i)W_K(k)W_L(i)W_M(k)\bigr]\Bigr)\,.
 \end{align}
  
For $1\leq i\leq k\leq r$ define 
\begin{equation*}
 S_0(i,k):=\sum_{\substack{J,L\in\D_{p_i},K,M\in\D_{p_k}:\\J\cap K=L\cap M=\emptyset,\\\emptyset\subsetneq J\cap L=J\setminus(J\cap M)\subsetneq J,\\\emptyset\subsetneq L\cap J=L\setminus(L\cap K)\subsetneq L}}
 \E\bigl[W_J(i)W_K(k)W_L(i)W_M(k)\bigr]\,.
\end{equation*}
If $p_i<p_k$, then from \eqref{mv1} we have that 
\begin{align}\label{mv1a}
 n^2\E\bigl[S_{i,k}^2\bigr]&\leq (p_i+p_k)^2\Bigl(\Var(W(i)W(k))-\sum_{\substack{J\in\D_{p_i},K\in\D_{p_k}:\\ J\cap K=\emptyset}}\sigma_{J}(i)^2\sigma_{K}(k)^2-S_0(i,k)\Bigr)\,.
\end{align}

Lemma \ref{sigmale} immediately yields that 
\begin{align}\label{mv3}
 \sum_{\substack{J\in\D_{p_i},K\in\D_{p_k}:\\ J\cap K=\emptyset}}\sigma_{J}(i)^2\sigma_{K}(k)^2&=1-\sum_{\substack{J\in\D_{p_i},K\in\D_{p_k}:\\ J\cap K\not=\emptyset}}\sigma_{J}(i)^2\sigma_{K}(k)^2\notag\\
 &\geq 1-\min\Bigl(p_i \rho_{n,k}^2\,,\,p_k \rho_{n,i}^2\Bigr)\,.
\end{align}
If $p_i=p_k$, then we obtain that
\begin{align}\label{mv1b}
 n^2\E\bigl[S_{i,k}^2\bigr]&\leq 4p_i^2\Bigl(\Var(W(i)W(k))-\sum_{\substack{J,K\in\D_{p_i}:\\ J\cap K=\emptyset}}\sigma_{J}(i)^2\sigma_{K}(k)^2\notag\\
 &\quad -\sum_{\substack{J,K\in\D_{p_i}:\\ J\cap K=\emptyset}}\E\bigl[W_J(i)W_J(k)\bigr]\E\bigl[W_K(i)W_K(k)\bigr]-S_0(i,k)\Bigr)\,.
\end{align}
Similarly to Lemma \ref{sigmale} we obtain for $p_i=p_k$ that
\begin{align*}
 &\Babs{\sum_{\substack{J,K\in\D_{p_i}:\\ J\cap K\not=\emptyset}}\E\bigl[W_J(i)W_J(k)\bigr]\E\bigl[W_K(i)W_K(k)\bigr]}\notag\\
 &\leq\sum_{J\in\D_{p_i}}\babs{\E\bigl[W_J(i)W_J(k)\bigr]}\sum_{j\in J}\sum_{\substack{K\in\D_{p_i}:\\j\in K}}\babs{\E\bigl[W_K(i)W_K(k)\bigr]}\notag\\
 &\leq\sum_{J\in\D_{p_i}}\babs{\E\bigl[W_J(i)W_J(k)\bigr]}\sum_{j\in J}\Bigl(\sum_{\substack{K\in\D_{p_i}:\\j\in K}}\sigma_K(i)^2\Bigr)^{1/2}\Bigl(\sum_{\substack{K\in\D_{p_k}:\\j\in K}}\sigma_K(k)^2\Bigr)^{1/2}\notag\\
 &\leq p_i\rho_{n,k}\rho_{n,i}\sum_{J\in\D_{p_i}}\babs{\E\bigl[W_J(i)W_J(k)\bigr]}\leq p_i\rho_{n,k}\rho_{n,i}\sum_{J\in\D_{p_i}}\sigma_J(i)\sigma_J(k)\notag\\
 &\leq p_i\rho_{n,k}\rho_{n,i}\Bigl(\sum_{J\in\D_{p_i}}\sigma_J(i)^2\Bigr)^{1/2}\Bigl(\sum_{J\in\D_{p_k}}\sigma_J(k)^2\Bigr)^{1/2}\notag\\
 &=p_i\rho_{n,k}\rho_{n,i}\,.
\end{align*}
Hence, if $p_i=p_k$ we have that 
\begin{align}\label{mv7}
 &\sum_{\substack{J,K\in\D_{p_i}:\\ J\cap K=\emptyset}}\E\bigl[W_J(i)W_J(k)\bigr]\E\bigl[W_K(i)W_K(k)\bigr]\notag\\
& =\Bigl(\sum_{J\in\D_{p_i}}\E\bigl[W_J(i)W_J(k)\bigr]\Bigr)^2-\sum_{\substack{J,K\in\D_{p_i}:\\ J\cap K\not=\emptyset}}\E\bigl[W_J(i)W_J(k)\bigr]\E\bigl[W_K(i)W_K(k)\bigr]\notag\\
&=v_{i,k}^2-\sum_{\substack{J,K\in\D_{p_i}:\\ J\cap K\not=\emptyset}}\E\bigl[W_J(i)W_J(k)\bigr]\E\bigl[W_K(i)W_K(k)\bigr]\notag\\
&\geq v_{i,k}^2-p_i\rho_{n,k}\rho_{n,i}\,.
\end{align}

Note that we can write
\begin{align}\label{mv6}
 \Var\bigl(W(i)W(k)\bigr)&=\E\bigl[W(i)^2W(k)^2\bigr]-\Bigl(\E\bigl[W(i)W(k)\bigr]\Bigr)^2\notag\\
 &=\Cov\bigl(W(i)^2,W(k)^2\bigr)+\E\bigl[W(i)^2\bigr]\E\bigl[W(k)^2\bigr]-v_{i,k}^2\notag\\
 &=\Cov\bigl(W(i)^2,W(k)^2\bigr)+1-v_{i,k}^2\,.
\end{align}

Hence, if $p_i<p_k$, then, since $v_{i,k}=0$, from \eqref{mv1a}, \eqref{mv3} and \eqref{mv6} we see that 
\begin{align}\label{mv4a}
 n^2\E\bigl[S_{i,k}^2\bigr]&\leq (p_i+p_k)^2\Bigl(\Cov\bigl(W(i)^2,W(k)^2\bigr)+\min\Bigl(p_i \rho_{n,k}^2\,,\,p_k \rho_{n,i}^2\Bigr)-S_0(i,k)\Bigr)\,.
\end{align}
If, on the other hand, $p_i=p_k$, then from \eqref{mv1b}, \eqref{mv3}, \eqref{mv6} and \eqref{mv7} we conclude that
\begin{align}\label{mv4b}
 n^2\E\bigl[S_{i,k}^2\bigr]&\leq 4p_i^2\Bigl(\Cov\bigl(W(i)^2,W(k)^2\bigr)-2v_{i,k}^2+p_i\min\Bigl(\rho_{n,k}^2\,,\,\rho_{n,i}^2\Bigr)\notag\\
 &\hspace{3cm}+p_i\rho_{n,k}\rho_{n,i}-S_0(i,k)\Bigr)\notag\\
 &=4p_i^2\Bigl(\E\bigl[W(i)^2W(k)^2\bigr]-1-2v_{i,k}^2+p_i\min\Bigl(\rho_{n,k}^2\,,\,\rho_{n,i}^2\Bigr)\notag\\
 &\hspace{3cm}+p_i\rho_{n,k}\rho_{n,i}-S_0(i,k)\Bigr)\notag\\
 &=4p_i^2\Bigl(\E\bigl[W(i)^2W(k)^2\bigr]-\E\bigl[Z(i)^2Z(k)^2\bigr]+p_i\min\Bigl(\rho_{n,k}^2\,,\,\rho_{n,i}^2\Bigr)\notag\\
 &\hspace{3cm}+p_i\rho_{n,k}\rho_{n,i}-S_0(i,k)\Bigr)\,.
\end{align}
For the last identity we have used the elementarily verifiable fact that 
\begin{equation*}
 \E\bigl[Z(i)^2Z(k)^2\bigr]=1+2v_{i,k}^2
\end{equation*}
for all $1\leq i,k\leq r$.\\
For $p_i< p_k$, by the orthogonality of the Hoeffding decomposition and by the Cauchy-Schwarz inequality we have that 
\begin{align}\label{mv5}
&\Cov\bigl(W(i)^2,W(k)^2\bigr)=\sum_{\substack{M,N\subseteq[n]:\\\abs{M}\leq2 p_i,\abs{N}\leq2p_k}}\E\bigl[U_M(i,i)U_N(k,k)\bigr]-\E[W(i)^2]\E[W(k)^2]\notag\\
&=\sum_{\substack{M\subseteq[n]:\\1\leq\abs{M}\leq2 p_i}}\E\bigl[U_M(i,i)U_M(k,k)\bigr]\notag\\
&\leq\sum_{\substack{M\subseteq[n]:\\1\leq\abs{M}\leq2 p_i}}\Bigl(\E\bigl[U_M(i,i)^2\bigr]\Bigr)^{1/2}\Bigl(\E\bigl[U_M(k,k)^2\bigr]\Bigr)^{1/2}\notag\\
&\leq\biggl(\sum_{\substack{M\subseteq[n]:\\1\leq\abs{M}\leq2 p_i}}\E\bigl[U_M(i,i)^2\bigr]\biggr)^{1/2}\biggl(\sum_{\substack{M\subseteq[n]:\\1\leq\abs{M}\leq2 p_i}}\E\bigl[U_M(k,k)^2\bigr]\biggr)^{1/2}\notag\\
&=\biggl(\sum_{\substack{M\subseteq[n]:\\1\leq\abs{M}\leq2 p_i}}\Var\bigl(U_M(i,i)\bigr)\biggr)^{1/2}\biggl(\sum_{\substack{M\subseteq[n]:\\1\leq\abs{M}\leq2 p_i}}\Var\bigl(U_M(k,k)\bigr)\biggr)^{1/2}\notag\\
&=\Bigl(\E\bigl[W(i)^4\bigr]-1\Bigr)^{1/2}\biggl(\sum_{\substack{M\subseteq[n]:\\1\leq\abs{M}\leq2 p_i}}\Var\bigl(U_M(k,k)\bigr)\biggr)^{1/2}\,.
\end{align}
Since $p_i<p_k$, by means of \eqref{mv5} we can further bound 
\begin{align}\label{mv8}
 &\Cov\bigl(W(i)^2,W(k)^2\bigr)\leq\Bigl(\E\bigl[W(i)^4\bigr]-1\Bigr)^{1/2}\biggl(\sum_{\substack{M\subseteq[n]:\\1\leq\abs{M}\leq2 p_k-1}}\Var\bigl(U_M(k,k)\bigr)\biggr)^{1/2}\notag\\
&=\Bigl(\E\bigl[W(i)^4\bigr]-1\Bigr)^{1/2}\Bigl(\Var\bigl(W(k)^2\bigr)-\sum_{\substack{M\subseteq[n]:\\\abs{M}=2 p_k}}\E\bigl[U_M(k,k)^2\bigr]\Bigr)^{1/2}\notag\\
&\leq \Bigl(\E\bigl[W(i)^4\bigr]-1\Bigr)^{1/2}\Bigl(\E\bigl[W(k)^4\bigr]-3+\bigl(2p_k+C_{p_k}\bigr)\rho_{n,k}^2\Bigr)^{1/2}\,,
\end{align}
where the final inequality is true by \eqref{compt1}.

From \eqref{mv4a} and \eqref{mv8} and from \eqref{mv4b}, respectively, we thus obtain the following result. 
\begin{lemma}\label{varmult}
Let $1\leq i\leq k\leq r$.
\begin{enumerate}[{\normalfont(i)}]
\item If $p_i<p_k$, then 
\begin{align*}
 n^2\E\bigl[S_{i,k}^2\bigr]&\leq (p_i+p_k)^2\Biggl[\Bigl(\E\bigl[W(i)^4\bigr]-1\Bigr)^{1/2}\Bigl(\E\bigl[W(k)^4\bigr]-3+\bigl(2p_k+C_{p_k}\bigr)\rho_{n,k}^2\Bigr)^{1/2}\\
 &\hspace{3cm}+\min\Bigl(p_i \rho_{n,k}^2\,,\,p_k \rho_{n,i}^2\Bigr)-S_0(i,k)\Biggr]\,.
\end{align*}
\item If $p_i=p_k$, then 
\begin{align*}
 n^2\E\bigl[S_{i,k}^2\bigr]&\leq4p_i^2\Bigl(\E\bigl[W(i)^2W(k)^2\bigr]-\E\bigl[Z(i)^2Z(k)^2\bigr]+p_i\min\Bigl(\rho_{n,k}^2\,,\,\rho_{n,i}^2\Bigr)\notag\\
 &\hspace{3cm}+p_i\rho_{n,k}\rho_{n,i}-S_0(i,k)\Bigr)\,.
\end{align*}
\end{enumerate}
\end{lemma}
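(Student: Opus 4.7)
The plan is to obtain Lemma \ref{varmult} as a direct assembly of the estimates already built up in the body text immediately preceding its statement. In fact, no genuinely new argument is needed: equations \eqref{mv4a} and \eqref{mv8} together handle the case $p_i < p_k$, while \eqref{mv4b} is already the assertion of the case $p_i = p_k$. The bookkeeping has been arranged precisely so that the lemma reduces to a two-branch case split.

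For part (i), I would start from the bound \eqref{mv4a}
\begin{equation*}
n^2 \E\bigl[S_{i,k}^2\bigr] \leq (p_i+p_k)^2 \Bigl(\Cov\bigl(W(i)^2,W(k)^2\bigr) + \min\bigl(p_i \rho_{n,k}^2,\, p_k \rho_{n,i}^2\bigr) - S_0(i,k)\Bigr),
\end{equation*}
and substitute \eqref{mv8}, which controls the covariance of the squares by
\begin{equation*}
\Cov\bigl(W(i)^2,W(k)^2\bigr) \leq \bigl(\E[W(i)^4]-1\bigr)^{1/2} \bigl(\E[W(k)^4]-3+(2p_k+C_{p_k})\rho_{n,k}^2\bigr)^{1/2}.
\end{equation*}
For part (ii) I would simply invoke \eqref{mv4b}, after recalling that for a centered Gaussian vector $Z \sim N_r(0,\V)$ with unit diagonal one has the elementary identity $\E[Z(i)^2 Z(k)^2] = 1 + 2 v_{i,k}^2$; this identity has already been used to rewrite \eqref{mv4b} in the form stated.

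The real content, and hence the "hard part", is therefore not in this lemma but in establishing its three ingredients. The most delicate step is \eqref{mv8}: one expands $W(i)^2$ and $W(k)^2$ via their Hoeffding decompositions, applies Cauchy--Schwarz termwise and globally on the range $1 \leq |M| \leq 2p_i$, and then uses the crucial fact that for the higher-order components ($\abs{M} \leq 2p_k - 1$) the variance sum can be bounded through \eqref{compt1}, i.e. via Propositions \ref{s0prop} and \ref{taubound}. The remaining ingredient \eqref{mv4a} requires extracting the top-degree Hoeffding terms of $W(i)W(k)$ and controlling the diagonal sums $\sum_{J \cap K \ne \emptyset} \sigma_J(i)^2 \sigma_K(k)^2$ through a Lemma \ref{sigmale}-type argument.

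The case distinction in the lemma reflects a real dichotomy that cannot be avoided. When $p_i = p_k$, the limiting covariance $v_{i,k}$ may be nonzero, so that one must carry along the cross terms $\E[W_J(i)W_J(k)]$ and subtract $v_{i,k}^2$ twice (once through $\Var(W(i)W(k))$ and once through the square of the off-diagonal trace), which is why the bound in case (ii) is phrased in terms of $\E[W(i)^2W(k)^2] - \E[Z(i)^2Z(k)^2]$. When $p_i < p_k$, by contrast, $v_{i,k} = 0$ automatically and the coarser Cauchy--Schwarz-through-the-mid-range bound \eqref{mv8} is the natural estimate, yielding the product structure that appears in case (i).
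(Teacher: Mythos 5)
Your proposal is correct and coincides with the paper's own proof: the paper obtains Lemma \ref{varmult} exactly as you describe, by combining \eqref{mv4a} with \eqref{mv8} in the case $p_i<p_k$ and by invoking \eqref{mv4b} (already rewritten via the identity $\E[Z(i)^2Z(k)^2]=1+2v_{i,k}^2$) in the case $p_i=p_k$. Your attribution of the real work to the ingredients --- Cauchy--Schwarz on the Hoeffding decompositions together with \eqref{compt1} for \eqref{mv8}, Lemma \ref{sigmale} for the disjoint-index sums, and the dichotomy driven by $v_{i,k}=0$ when $p_i<p_k$ --- also matches the paper's development precisely.
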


It remains to bound the quantities $S_0(i,k)$, $1\leq i\leq k\leq r$. The concepts of free indices and bifold quadruples from Subsection \ref{proof1d} generalize in the obvious way to quadruples 
$(J_1,J_2,J_3,J_4)\in\D_{p_i}\times\D_{p_k}\times\D_{p_i}\times\D_{p_k}=:\D_{i,k}^4$. We denote by $\B_{i,k}$ the collection of all bifold quadruples in $\D_{i,k}^4$.
Also, we denote by $\mathcal{T}_{i,k}$ the set of quadruples $(J_1,J_2,J_3,J_4)\in\D_{i,k}^4$ which are neither bifold 
nor have a free index, i.e. which satisfy 
\begin{equation*}
 1_{J_1}+1_{J_2}+1_{J_3}+1_{J_4}\geq 21_{J_1\cup J_2\cup J_3\cup J_4}
\end{equation*}
and there is a $j\in[n]$ such that 
\begin{equation*}
 1_{J_1}(j)+1_{J_2}(j)+1_{J_3}(j)+1_{J_4}(j)\geq3\,. 
\end{equation*}
With these definitions, for $1\leq i\leq k\leq r$, we define 
\begin{equation*}
 \tau_{i,k}:=\sum_{(J,K,L,M)\in\mathcal{T}_{i,k}}\sigma_{J}(i)\sigma_K(k)\sigma_L(i)\sigma_M(k)\,.
\end{equation*}

The next result is a generalization of Proposition \ref{s0prop}.
\begin{prop}\label{s0propgen}
 With these definitions, for $1\leq i\leq k\leq r$, we have 
 \begin{equation*}
  S_0(i,k)\geq-\tau_{i,k}\,.
 \end{equation*}
\end{prop}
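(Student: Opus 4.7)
My plan is to generalize de Jong's one-dimensional argument behind Proposition \ref{s0prop} (namely, Proposition 5(b) of \cite{deJo90}), adapting it to account for the two possibly different orders $p_i,p_k$. The starting observation is that every quadruple $(J,K,L,M)\in\mathcal{S}_0(i,k)$ admits a unique parametrization via four pairwise disjoint subsets $A,B,C,D\subseteq[n]$: setting $A=J\cap L$, $B=J\cap M$, $C=L\cap K$ and $D=K\cap M$ one has $J=A\cup B$, $L=A\cup C$, $K=C\cup D$, $M=B\cup D$, with $|A|+|B|=p_i$, $|C|+|D|=p_k$, $|B|=|C|$ and $|A|,|B|,|D|\geq 1$. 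For each admissible pair $(A,D)$ I would introduce
\[
h^B_{A,D} := \E\bigl[W_{A\cup B}(i)\, W_{B\cup D}(k)\,\bigl|\,X_j,\,j\in A\cup D\bigr].
\]
Because $X_B$ and $X_C$ are independent of each other and of $(X_A,X_D)$ whenever $B\cap C=\emptyset$, an elementary conditional-independence argument yields the identification $\E[W_J(i)W_K(k)W_L(i)W_M(k)]=\E[h^B_{A,D}\,h^C_{A,D}]$ for the quadruple attached to $(A,B,C,D)$.

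Next I would exploit the non-negativity $\E\bigl[(\sum_B h^B_{A,D})^2\bigr]\geq 0$ and expand the square as a double sum over all valid subsets $B,C$ disjoint from $A\cup D$, splitting it into three pieces according to whether $B=C$, $B\neq C$ with $B\cap C=\emptyset$, or $B\cap C\neq\emptyset$. Since only the middle case reproduces $\mathcal{S}_0^{A,D}$, rearranging produces
\[
\mathcal{S}_0^{A,D} \;\geq\; -\sum_B \E\bigl[(h^B_{A,D})^2\bigr] \;-\; \sum_{B\neq C,\,B\cap C\neq\emptyset} \E\bigl[h^B_{A,D}\, h^C_{A,D}\bigr].
\]
The key analytical step is a Cauchy--Schwarz inequality in $L^2(\mu_B)$, where $\mu_B$ denotes the joint law of $(X_j:j\in B)$. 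Writing $W_{A\cup B}(i)=\xi(X_A,X_B)$ and $W_{B\cup D}(k)=\eta(X_B,X_D)$ for suitable measurable maps, the inner-product representation $h^B_{A,D}=\langle\xi(X_A,\cdot),\eta(\cdot,X_D)\rangle_{L^2(\mu_B)}$ gives the pointwise bound $\abs{h^B_{A,D}}^2\leq \E[W_{A\cup B}(i)^2\mid X_A]\cdot\E[W_{B\cup D}(k)^2\mid X_D]$, and the mutual independence of $X_A$ and $X_D$ then yields
\[
\E\bigl[(h^B_{A,D})^2\bigr] \;\leq\; \sigma_{A\cup B}(i)^2\,\sigma_{B\cup D}(k)^2,
\]
entirely sidestepping any hypercontractivity requirement. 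A second Cauchy--Schwarz gives $\abs{\E[h^B_{A,D}\,h^C_{A,D}]}\leq \sigma_{A\cup B}(i)\sigma_{B\cup D}(k)\sigma_{A\cup C}(i)\sigma_{C\cup D}(k)$.

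To finish, I would verify the combinatorial claim that in either error case the resulting quadruple $(J,K,L,M)$ has an index --- in $B$ when $B=C$, in $B\cap C$ otherwise --- appearing in all four sets, while no free index is present, so that such quadruples automatically lie in $\mathcal{T}_{i,k}$. Summing the above estimates over all admissible $(A,D)$ then exhibits the whole error as a subset sum of $\tau_{i,k}$, giving $S_0(i,k)\geq -\tau_{i,k}$. The main obstacle is the combinatorial bookkeeping: one must make sure that distinct parameter configurations $(A,B,C,D)$ produce distinct quadruples in $\mathcal{T}_{i,k}$, so that no over-counting slips into the bound, and one must handle the borderline regime $p_i=p_k$, where the roles of the $i$- and $k$-type arguments become interchangeable and additional identifications between parametrizations can occur.
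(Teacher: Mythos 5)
Your proposal is correct and is essentially the paper's own proof in different notation: your $h^B_{A,D}$ are exactly the conditional expectations $\E\bigl[W_J(i)W_M(k)\,\bigl|\,\F_{J\Delta M}\bigr]$ appearing there, the expansion of $\E\bigl[\bigl(\sum_B h^B_{A,D}\bigr)^2\bigr]\geq0$ with the trichotomy $B=C$, $B\cap C=\emptyset$, $B\cap C\neq\emptyset$ reproduces the paper's split into the bifold part $S_0(i,k,l)$ and the remainder $R_l$ (with your fixed pair $(A,D)$ playing the role of the paper's $(B,B')$ and your summation variable $B$ that of the intersection $J\cap M$ of size $l$), and your bifold factorization identity and second-moment bound are precisely Lemmas \ref{le4dj} and \ref{le3dj}. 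The bookkeeping issue you flag resolves itself just as in the paper: since $A=J\setminus M$, $B=J\cap M$, $C=L\cap K$ and $D=M\setminus J$ are recovered from the quadruple itself, distinct configurations produce distinct quadruples of $\mathcal{T}_{i,k}$, so the error is a sub-sum of the nonnegative summands defining $\tau_{i,k}$.
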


The proof is postponed to Section \ref{proofs}.
It remains to obtain a bound on the quantities $\tau_{i,k}$ in terms of $\rho_{n,i}^2$ and $\rho_{n,k}^2$. This is provided by the following result which generalizes Proposition \ref{taubound}. 
An outline of the main elements of the proof is given in Section \ref{proofs}.

\begin{prop}\label{gentaubound}
For each $1\leq i, k\leq r$, there exists a finite constant $C_{i,k}$ which depends on $i$ and $k$ only through $p_i$ and $p_k$ and which is independent of $n$ such that 
\[\tau_{i,k}=\sum_{(J,K,L,M)\in\mathcal{T}_{i,k}}\sigma_{J}(i)\sigma_K(k)\sigma_L(i)\sigma_M(k)\leq C_{i,k}\max\bigl(\rho_{n,i}^2,\rho_{n,k}^2\bigr)\,.\]
Furthermore, we have $C_{i,k}=C_{k,i}$.
\end{prop}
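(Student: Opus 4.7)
The approach is to lift the proof strategy of Proposition \ref{taubound} to quadruples $(J,K,L,M)\in\D_{p_i}\times\D_{p_k}\times\D_{p_i}\times\D_{p_k}$ by carefully tracking which sets have cardinality $p_i$ and which have cardinality $p_k$. The central combinatorial observation is that for every $(J,K,L,M)\in\mathcal{T}_{i,k}$, the total multiplicity $|J|+|K|+|L|+|M|=2(p_i+p_k)$ is distributed over $I:=J\cup K\cup L\cup M$ with every index having multiplicity at least $2$ and at least one index having multiplicity at least $3$. A one-line arithmetic estimate then yields $|I|\leq p_i+p_k-1$, showing that elements of $\mathcal{T}_{i,k}$ save one ``dimension'' compared to bifold quadruples (for which $|I|=p_i+p_k$). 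This saved dimension is precisely what will upgrade the generic $O(1)$ bifold-type bound into the desired $O(\rho^2)$ bound.

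First I would partition $\mathcal{T}_{i,k}$ into finitely many combinatorial \emph{types}, where a type is the specification of the cardinalities $(|E_S|)_{\emptyset\neq S\subseteq\{1,2,3,4\}}$ of the Venn-diagram cells $E_S:=\{j\in[n]\,:\,j\in J_s\Leftrightarrow s\in S\}$, with $(J_1,J_2,J_3,J_4):=(J,K,L,M)$. The membership constraints
\[
\sum_{S\ni 1}|E_S|=\sum_{S\ni 3}|E_S|=p_i,\qquad \sum_{S\ni 2}|E_S|=\sum_{S\ni 4}|E_S|=p_k,
\]
together with the no-free-index requirement $|E_S|=0$ whenever $|S|=1$ and the existence of some $S$ with $|S|\geq 3$ and $|E_S|\geq 1$, cut out a finite list of admissible types whose number depends only on $(p_i,p_k)$.

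For each admissible type $\pi$, the goal is to bound
\[
T_\pi:=\sum_{\substack{(J,K,L,M)\in\mathcal{T}_{i,k}\\ \text{of type }\pi}}\sigma_J(i)\sigma_K(k)\sigma_L(i)\sigma_M(k)\leq C_\pi\max\bigl(\rho_{n,i}^2,\rho_{n,k}^2\bigr).
\]
Pick a position $j_0$ that, according to $\pi$, lies in at least three of the four sets; after one Cauchy-Schwarz step $\sigma_J(i)\sigma_L(i)\leq\tfrac12(\sigma_J(i)^2+\sigma_L(i)^2)$ and an analogous one for the $p_k$-pair, the task reduces to estimating sums of products of the form $\sigma_J(i)^2\sigma_K(k)^2$ (and variants). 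Summing the $\sigma_J(i)^2$-factor over $J\ni j_0$ and the $\sigma_K(k)^2$-factor over $K\ni j_0$ then produces, by the very definitions of $\rho_{n,i}$ and $\rho_{n,k}$, factors of $\rho_{n,i}^2$ and $\rho_{n,k}^2$, while any remaining summation is controlled by the normalisation $\sum_{J\in\D_{p_i}}\sigma_J(i)^2=\sum_{K\in\D_{p_k}}\sigma_K(k)^2=1$ recalled in Lemma \ref{hdle}. Summing over the finite list of types delivers $\tau_{i,k}\leq C_{i,k}\max(\rho_{n,i}^2,\rho_{n,k}^2)$.

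The symmetry $C_{i,k}=C_{k,i}$ is automatic from the bijection $(J,K,L,M)\leftrightarrow(K,J,M,L)$ between $\mathcal{T}_{i,k}$ and $\mathcal{T}_{k,i}$, which preserves the summand. The main obstacle is the type-by-type Cauchy-Schwarz bookkeeping: depending on $\pi$, the pivot index $j_0$ may belong to three specific sets (for instance to $J,L,K$ but not $M$, or to $J,K,M$ but not $L$), and the argument must be designed so that in each case the surviving ``$j_0\in\cdot$'' constraints translate into one or two $\rho$-factors of the correct flavour (namely $\rho_{n,i}^2$ when the constrained sum is over a $p_i$-set, and $\rho_{n,k}^2$ otherwise). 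Since in all scenarios these factors are bounded by $\max(\rho_{n,i}^2,\rho_{n,k}^2)$, the target estimate emerges after absorbing the combinatorial multiplicities into $C_{i,k}$.
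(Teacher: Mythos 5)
Your combinatorial frame is sound: the classification of $\mathcal{T}_{i,k}$ into finitely many types via the Venn-cell cardinalities is exactly the paper's classification by equivalence classes of shadows (cf.\ Remark \ref{taurem}, which notes that a shadow class is determined by the cardinalities of all intersections), your bound $\abs{J\cup K\cup L\cup M}\leq p_i+p_k-1$ is correct, and the symmetry $C_{i,k}=C_{k,i}$ via $(J,K,L,M)\mapsto(K,J,M,L)$ is fine. The genuine gap is the central analytic step: decoupling via $\sigma_J(i)\sigma_L(i)\leq\tfrac12(\sigma_J(i)^2+\sigma_L(i)^2)$ and its $p_k$-analogue. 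After expanding, you must bound cross terms such as $\sum_{\text{type }\pi}\sigma_J(i)^2\sigma_K(k)^2$, in which the weights attached to $L$ and $M$ have disappeared; the sum over the completions $(L,M)$ of a fixed $(J,K)$ is then a pure count, and this count is \emph{not} bounded in $n$ whenever the type has a nonempty cell $E_{\{3,4\}}$ (indices lying in $L\cap M$ only), since those indices range freely over $[n]\setminus(J\cup K)$. Such types do occur in $\mathcal{T}_{i,k}$: take $p_i=p_k=2$, $J=K=\{a,b\}$, $L=M=\{a,c\}$ (multiplicities $4,2,2$). Concretely, let $n=2m$ and put $\sigma_J(i)^2=\sigma_J(k)^2=1/m$ on $m$ pairwise disjoint pairs and $0$ elsewhere. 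Then $\max(\rho_{n,i}^2,\rho_{n,k}^2)=1/m$, but your decoupled bound for this type contains
\begin{equation*}
\frac14\sum_{J}\sigma_J(i)^2\sigma_J(k)^2\cdot 2(n-2)\;=\;\frac{m-1}{m}\;\asymp\;1\,,
\end{equation*}
which does not tend to $0$, while the true contribution to $\tau_{i,k}$ vanishes here (the pairs $\{a,c\}$ straddling two support pairs carry $\sigma_L(i)=\sigma_M(k)=0$). So the proposed reduction to products of two squared variances cannot yield the $O\bigl(\max(\rho_{n,i}^2,\rho_{n,k}^2)\bigr)$ estimate: the coupling $\sigma_L(i)\sigma_M(k)$ is precisely what tames the summation over the ``extra'' coordinates, and discarding it is fatal.

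The paper avoids this by never decoupling. In the proof of Lemma \ref{shadow} (which its sketch for Proposition \ref{gentaubound} says to generalize verbatim to $(p_i,p_k)$-shadows), the type sum is rewritten as $\sum_{(i_1,\dotsc,i_r)\in[n]^r}\prod_{l=1}^4 G_{F_l}$ with all four factors retained, and Cauchy--Schwarz is applied to \emph{nested partial sums over blocks of coordinates} — so that every summation over a coordinate outside a given $F_l$ stays weighted by some $g^2$ factor, and $\rho^2$ is extracted from sums of $g^2$ over sets constrained to contain a fixed index. The resulting case analysis (two sets equal; all equal; all pairwise distinct, split according to whether some index lies in exactly two of the sets) is exactly the bookkeeping your last paragraph gestures at, but it must be carried out on the coupled fourfold product, not on decoupled pairs. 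Your heuristic that the ``saved dimension'' $\abs{I}\leq p_i+p_k-1$ upgrades $O(1)$ to $O(\rho^2)$ is a reasonable guide, but by itself it is not a proof mechanism — as the counterexample shows, a type can have the reduced union size and still defeat the decoupled estimate.
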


Combining Propositions \ref{s0propgen} and \ref{gentaubound}, we thus obtain that 
\begin{equation}\label{s0boundgen}
 S_0(i,k)\geq-C_{i,k}\max\bigl(\rho_{n,i}^2,\rho_{n,k}^2\bigr)
\end{equation}
for all $1\leq i,k\leq r$.\\

Observe that, using \eqref{oplainv} and the symmetry of $S$, we can bound
\begin{equation}\label{S1}
 \Opnorm{\Lambda^{-1}}\E\bigl[\HSnorm{S}\bigr]\leq \frac{n}{p_1}\Bigl(\E\bigl[\HSnorm{S}^2\bigr]\Bigr)^{1/2}
 =\frac{1}{p_1}\Bigl(\sum_{i,k=1}^r n^2\E\bigl[S_{i,k}^2\bigr]\Bigr)^{1/2}\,.
\end{equation}
Now, using Lemma \ref{varmult} we have 
\begin{align}\label{S2}
 \sum_{i,k=1}^r n^2\E\bigl[S_{i,k}^2\bigr]&=\sum_{l,m=1}^s\sum_{i=r_{l-1}+1}^{r_l}\sum_{k=r_{m-1}+1}^{r_m}n^2\E\bigl[S_{i,k}^2\bigr]\notag\\
&=\sum_{l=1}^s\sum_{i,k=r_{l-1}+1}^{r_l}n^2\E\bigl[S_{i,k}^2\bigr]+2\sum_{1\leq l<m\leq s}\sum_{i=r_{l-1}+1}^{r_l}\sum_{k=r_{m-1}+1}^{r_m}n^2\E\bigl[S_{i,k}^2\bigr]\notag\\
&\leq\sum_{l=1}^s4q_l^2\sum_{i,k=r_{l-1}+1}^{r_l}\Bigl(\E\bigl[W(i)^2W(k)^2\bigr]-\E\bigl[Z(i)^2Z(k)^2\bigr]\notag\\
&\hspace{2cm}+q_l\min\Bigl(\rho_{n,k}^2\,,\,\rho_{n,i}^2\Bigr)+q_l\rho_{n,k}\rho_{n,i}+C_{i,k}\max\bigl(\rho_{n,i}^2,\rho_{n,k}^2\bigr)\Bigr)\notag\\
&\;+2\sum_{1\leq l<m\leq s}(q_l+q_m)^2\sum_{i=r_{l-1}+1}^{r_l}\sum_{k=r_{m-1}+1}^{r_m}\Biggl[\Bigl(\E\bigl[W(i)^4\bigr]-1\Bigr)^{1/2}\notag\\
&\hspace{2cm}\Bigl(\E\bigl[W(k)^4\bigr]-3+\bigl(2q_m+C_{q_m}\bigr)\rho_{n,k}^2\Bigr)^{1/2}\notag\\
 &\hspace{3cm}+\min\Bigl(q_l \rho_{n,k}^2\,,\,q_m \rho_{n,i}^2\Bigr)+C_{i,k}\max\bigl(\rho_{n,i}^2,\rho_{n,k}^2\bigr)\Biggr]\notag\\
 &=A\,.
\end{align}
Here, the constants $C_{q_m}$ are defined by Proposition \ref{taubound}.

Note that from Lemma \ref{remlemma} applied to the exchangeable pair $(W(i),W'(i))$ we have 
\begin{equation}\label{R2}
 \frac{n}{4p_i}\E\babs{W'(i)-W(i)}^4\leq 2\bigl(\E\bigl[W(i)^4\bigr]-3\bigr)+ 3\bigl(C_{p_i}+2p_i\bigr)\rho_{n,i}^2\,.
\end{equation}

Using \eqref{oplainv} as well as Jensen's inequality, we obtain
\begin{align*}
 & \Opnorm{\Lambda^{-1}}\E\bigl[\Enorm{W'-W}^3\bigr]=\frac{n}{p_1}\E\bigl[\Enorm{W'-W}^3\bigr]\notag\\
 &=\frac{n}{p_1}\Bigl(\sum_{i=1}^r\E\babs{W'(i)-W(i)}^2\Bigr)^{3/2}=\frac{n}{p_1}\Bigl(r\sum_{i=1}^r\E\babs{W'(i)-W(i)}^2\frac{1}{r}\Bigr)^{3/2}\notag\\
& \leq\frac{n}{p_1}r^{3/2}\sum_{i=1}^r\E\babs{W'(i)-W(i)}^3\frac{1}{r}\notag\\
& =\frac{n}{p_1}r^{1/2}\sum_{i=1}^r\E\babs{W'(i)-W(i)}^3\,.
\end{align*}
Thus, by \eqref{R2} we have
\begin{align}\label{R1}
&\Opnorm{\Lambda^{-1}}\E\bigl[\Enorm{W'-W}^3\bigr]\notag\\
&\leq\frac{n}{p_1}r^{1/2}\sum_{i=1}^r\Bigl(\E\babs{W'(i)-W(i)}^2\Bigr)^{1/2}\Bigl(\E\babs{W'(i)-W(i)}^4\Bigr)^{1/2}\notag\\
&=2\sqrt{2r}\sum_{i=1}^r\frac{p_i}{p_1}\Bigl(\frac{n}{4p_i}\E\babs{W'(i)-W(i)}^4\Bigr)^{1/2}\notag\\
&\leq2\sqrt{2r}\sum_{i=1}^r\frac{p_i}{p_1}\Bigl(2\bigl(\E\bigl[W(i)^4\bigr]-3\bigr)+ 3\bigl(C_{p_i}+2p_i\bigr)\rho_{n,i}^2\Bigr)^{1/2}\notag\\
&=2\sqrt{2r}\sum_{l=1}^s\frac{q_l}{q_1}\sum_{i=r_{l-1}+1}^{r_l}\Bigl(2\bigl(\E\bigl[W(i)^4\bigr]-3\bigr)+ 3\bigl(C_{q_l}+2q_l\bigr)\rho_{n,i}^2\Bigr)^{1/2}\,.
\end{align}

Theorem \ref{mdmt} now follows from Theorem \ref{meckes} and from the respective bounds \eqref{S1}, \eqref{S2} and \eqref{R1}\,.


\section{Proofs of several technical results}\label{proofs}

\begin{lemma}\label{sigmale}
 In the situation of Section \ref{mdim}, for all $1\leq i\leq k\leq r$ we have 
 \begin{equation*}
 \sum_{\substack{J\in\D_{p_i},K\in\D_{p_k}:\\ J\cap K\not=\emptyset}}\sigma_{J}(i)^2\sigma_{K}(k)^2\leq\min\Bigl(p_i 
\rho_{n,k}^2\,,\,p_k \rho_{n,i}^2\Bigr)\,.
\end{equation*}
\end{lemma}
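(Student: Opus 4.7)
The plan is to bound the constrained double sum by relaxing the constraint $J\cap K\neq\emptyset$ into a sum indexed by a common element $j\in J\cap K$, and then to isolate the $\rho_{n,\cdot}$-factor by max-ing out the other coordinate.

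First I would use the crude inequality $1 \leq |J\cap K|$ valid whenever $J\cap K\neq\emptyset$, together with nonnegativity of the summands, to write
\begin{align*}
\sum_{\substack{J\in\D_{p_i},K\in\D_{p_k}:\\ J\cap K\neq\emptyset}}\sigma_J(i)^2\sigma_K(k)^2
&\leq \sum_{\substack{J\in\D_{p_i},K\in\D_{p_k}:\\ J\cap K\neq\emptyset}}|J\cap K|\,\sigma_J(i)^2\sigma_K(k)^2\\
&= \sum_{j=1}^n \Bigl(\sum_{\substack{J\in\D_{p_i}:\\ j\in J}}\sigma_J(i)^2\Bigr)\Bigl(\sum_{\substack{K\in\D_{p_k}:\\ j\in K}}\sigma_K(k)^2\Bigr),
\end{align*}
by swapping the order of summation (expressing $|J\cap K|=\sum_{j=1}^n 1_{\{j\in J\}}1_{\{j\in K\}}$) and factoring.

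Next, to obtain the first of the two asserted bounds, I would pull the second factor out by its uniform-in-$j$ maximum, which by definition equals $\rho_{n,k}^2$:
\begin{align*}
\sum_{j=1}^n \Bigl(\sum_{\substack{J\in\D_{p_i}:\\ j\in J}}\sigma_J(i)^2\Bigr)\Bigl(\sum_{\substack{K\in\D_{p_k}:\\ j\in K}}\sigma_K(k)^2\Bigr)
\leq \rho_{n,k}^2 \sum_{j=1}^n\sum_{\substack{J\in\D_{p_i}:\\ j\in J}}\sigma_J(i)^2 = \rho_{n,k}^2 \sum_{J\in\D_{p_i}}|J|\,\sigma_J(i)^2 = p_i\,\rho_{n,k}^2,
\end{align*}
where in the last step I used that $|J|=p_i$ for every $J\in\D_{p_i}$ and the normalization $\sum_{J\in\D_{p_i}}\sigma_J(i)^2 = \Var(W(i)) = 1$. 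By swapping the roles of the two factors (pulling out the first one as $\rho_{n,i}^2$ instead) the analogous bound $p_k\,\rho_{n,i}^2$ follows. Taking the minimum of the two bounds yields the claim.

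There is no real obstacle here: the argument is essentially a Cauchy–Schwarz-in-disguise, relying only on the Lindeberg-Feller type normalization $\sum_{J\in\D_{p_i}}\sigma_J(i)^2=1$ and the definition of $\rho_{n,k}^2$ as a maximum. The only mild subtlety is the initial step of upper-bounding $1$ by $|J\cap K|$, which is what makes the double sum factor cleanly after exchanging the order of summation.
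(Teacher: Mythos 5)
Your proof is correct and is essentially the paper's own argument: your relaxation $1\leq\abs{J\cap K}$ followed by the swap
$\sum_{J,K}\abs{J\cap K}\,\sigma_J(i)^2\sigma_K(k)^2=\sum_{j=1}^n\bigl(\sum_{J\ni j}\sigma_J(i)^2\bigr)\bigl(\sum_{K\ni j}\sigma_K(k)^2\bigr)$
is exactly the paper's union bound $\sum_{K\in\D_{p_k}:\,J\cap K\not=\emptyset}\sigma_K(k)^2\leq\sum_{j\in J}\sum_{K\ni j}\sigma_K(k)^2$ read with the order of summation reversed. Both proofs then pull out $\rho_{n,k}^2$ uniformly in $j$, use $\abs{J}=p_i$ together with the normalization $\sum_{J\in\D_{p_i}}\sigma_J(i)^2=1$, and obtain the other bound by symmetry.
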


\begin{proof}
 Note that we have 
\begin{align*}
 \sum_{\substack{J\in\D_{p_i},K\in\D_{p_k}:\\ J\cap K\not=\emptyset}}\sigma_{J}(i)^2\sigma_{K}(k)^2
 &=\sum_{J\in\D_{p_i}}\sigma_{J}(i)^2\sum_{\substack{K\in\D_{p_k}:\\J\cap K\not=\emptyset}}\sigma_{K}(k)^2\\
 &\leq\sum_{J\in\D_{p_i}}\sigma_{J}(i)^2\sum_{j\in J}\sum_{\substack{K\in\D_{p_k}:\\j\in K}}\sigma_{K}(k)^2\\
 &\leq p_i \rho_{k}^2\sum_{J\in\D_{p_i}}\sigma_{J}(i)^2=p_i \rho_{k}^2\,.
\end{align*}
The claim follows by symmetry.
\end{proof}

\begin{lemma}[Generalization of Lemma 4 of \cite{deJo90}]\label{le4dj}
 Let $(J,K,L,M)\in \B_{i,k}$ be a bifold quadruple. Then, in the situation of Section \ref{mdim} we have 
 \begin{align*}
  \E\bigl[W_JW_KW_LW_M\bigr]&=\E\Bigl[\E\bigl[W_JW_K\,\bigl|\,\F_{J\Delta K}\bigr]\E\bigl[W_LW_M\,\bigl|\,\F_{L\Delta M}\bigr]\Bigr]\,.
 \end{align*}
 \end{lemma}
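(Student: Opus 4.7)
The plan is to exploit the bifold structure combinatorially to identify $J \Delta K$ with $L \Delta M$, and then reduce the four-fold expectation to a two-fold product via a conditional independence argument, given the common symmetric difference.

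First I would unpack the bifold hypothesis. Because every element of $J \cup K \cup L \cup M$ belongs to exactly two of the four sets, the union partitions into six (possibly empty) blocks, one for each unordered pair from $\{J,K,L,M\}$. A direct tally then gives
$$J\setminus K = (J\cap L)\cup(J\cap M), \qquad K\setminus J = (K\cap L)\cup(K\cap M),$$
and the analogous identities for $L$ and $M$, so that both $J\Delta K$ and $L\Delta M$ equal the common set
$$T := (J\cap L)\cup(J\cap M)\cup(K\cap L)\cup(K\cap M).$$
The remaining indices of $J\cup K\cup L\cup M$ split into the two further disjoint blocks $J\cap K$ and $L\cap M$, each disjoint from $T$. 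This symmetric-difference identity is really the only combinatorial ingredient of the proof; I expect it to be the main (and only mildly tricky) step, and the rest will be soft measure-theoretic manipulation.

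With $T$ so defined, I would next observe that $W_J W_K$ is $\F_{T\cup (J\cap K)}$-measurable while $W_L W_M$ is $\F_{T\cup (L\cap M)}$-measurable. Since the three index blocks $T$, $J\cap K$ and $L\cap M$ are pairwise disjoint and the $X_j$ are independent, the conditional distribution of $W_J W_K$ given $\F_T$ depends only on the independent fibre $(X_j)_{j\in J\cap K}$, while that of $W_L W_M$ given $\F_T$ depends only on $(X_j)_{j\in L\cap M}$. These two fibres are independent of each other and of $\F_T$, so $W_J W_K$ and $W_L W_M$ are conditionally independent given $\F_T$. Applying the tower property and this conditional independence yields
$$\E\bigl[W_J W_K W_L W_M\bigr] = \E\Bigl[\E\bigl[W_J W_K \,\bigl|\, \F_T\bigr]\,\E\bigl[W_L W_M \,\bigl|\, \F_T\bigr]\Bigr],$$
and substituting $T = J\Delta K = L\Delta M$ inside the two inner conditional expectations gives exactly the claimed identity.
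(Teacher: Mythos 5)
Your proof is correct and takes essentially the same route as the paper: both arguments rest on the combinatorial identity $J\Delta K = L\Delta M = (J\cup K)\cap(L\cup M)$ for bifold quadruples, and then factor the fourfold expectation by conditioning on this common set, exploiting the independence of the underlying $X_j$. Your conditional-independence phrasing of the factorization is just a repackaging of the paper's iterated-conditioning chain, and as a bonus you spell out the six-block partition verifying the set identity, which the paper asserts without proof.
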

\begin{proof}
We repeat the short proof from \cite{deJo90}.  By independence, we have 
\begin{align*}
\E\bigl[W_JW_KW_LW_M\bigr]&=\E\Bigl[W_JW_K \E\bigl[W_LW_M\,\bigl|\,\F_{L\cup M}\bigr]\Bigr]\\
&=\E\Bigl[W_JW_K \E\bigl[W_LW_M\,\bigl|\,\F_{(J\cup K)\cap(L\cup M)}\bigr]\Bigr]\\
&=\E\Bigl[\E\bigl[W_JW_K\,\bigl|\,\F_{(J\cup K)\cap(L\cup M)}\bigr] \E\bigl[W_LW_M\,\bigl|\,\F_{(J\cup K)\cap(L\cup M)}\bigr]\Bigr]\,.
\end{align*}
Now, the claim follows from the fact that for a bifold quadruple $(J,K,L,M)$ the identity
\begin{equation*}
 (J\cup K)\cap(L\cup M)=J\Delta K=L\Delta M
\end{equation*}
holds true.
\end{proof}

\begin{lemma}[Generalization of Lemma 3 of \cite{deJo90}]\label{le3dj}
In the situation of Section \ref{mdim}, for $1\leq i\leq k\leq r$, $J\in\D_{p_i}$ and $K\in\D_{p_k}$ we have
\begin{equation*}
\E\Bigl[\Bigl(\E\bigl[W_JW_K\,\bigl|\,\F_{J\Delta K}\bigr]\Bigr)^2\Bigr]\leq\sigma_J^2(i)\sigma_K^2(k)\,.
\end{equation*}
\end{lemma}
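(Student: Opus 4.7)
The plan is to exploit the independence structure of the underlying random variables together with a pointwise application of the Cauchy-Schwarz inequality in the variables that remain random after conditioning. First, I would partition the indices via $A := J\setminus K$, $B := J\cap K$, $C := K\setminus J$, so that $J = A\cup B$, $K = B\cup C$ and $J\Delta K = A\cup C$. Writing $W_J(i) = f_J(X_A, X_B)$ and $W_K(k) = f_K(X_B, X_C)$, the joint independence of $X_A$, $X_B$, $X_C$ implies that only $X_B$ remains ``random'' after conditioning on $\F_{A\cup C}$. Hence, if $\mu_B$ denotes the law of $X_B$,
$$\phi(X_A, X_C) := \E\bigl[W_J(i)W_K(k)\,\bigl|\,\F_{A\cup C}\bigr] = \int f_J(X_A, y)\,f_K(y, X_C)\,\mu_B(dy)\,.$$

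Next, applying the Cauchy-Schwarz inequality in $L^2(\mu_B)$ pointwise in $(X_A, X_C)$ gives
$$\phi(X_A, X_C)^2 \leq \Bigl(\int f_J(X_A, y)^2\,\mu_B(dy)\Bigr)\Bigl(\int f_K(y, X_C)^2\,\mu_B(dy)\Bigr) =: g(X_A)\,h(X_C)\,.$$
Taking unconditional expectations and invoking the independence of $X_A$ and $X_C$ — which is the essential ingredient enabling the factorization — yields
$$\E\bigl[\phi(X_A,X_C)^2\bigr] \leq \E\bigl[g(X_A)\bigr]\cdot\E\bigl[h(X_C)\bigr] = \E\bigl[W_J(i)^2\bigr]\cdot\E\bigl[W_K(k)^2\bigr] = \sigma_J(i)^2\,\sigma_K(k)^2\,,$$
which is exactly the claimed bound.

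The degenerate configurations are automatically covered by the same argument: if $B = \emptyset$, then $W_J(i)$ and $W_K(k)$ are independent and the inner integral step is trivial, while if $J = K$ then $A = C = \emptyset$ and the bound reduces to the classical (unconditional) Cauchy-Schwarz inequality applied to $W_J(i)$ and $W_K(k)$. I do not anticipate any real obstacle here: the result is essentially a conditional Cauchy-Schwarz statement, and the only content is the bookkeeping needed to identify which variables disappear under the conditioning $\sigma$-field $\F_{J\Delta K}$ and which remain shared between the two factors.
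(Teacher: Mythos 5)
Your proof is correct and is essentially the paper's own argument in explicit coordinates: your pointwise Cauchy--Schwarz in $L^2(\mu_B)$ is exactly the conditional Cauchy--Schwarz inequality given $\F_{J\Delta K}$, and your factorization via the independence of $X_A$ and $X_C$ corresponds to the paper's reduction $\E\bigl[W_J^2\,\bigl|\,\F_{J\Delta K}\bigr]=\E\bigl[W_J^2\,\bigl|\,\F_{J\setminus K}\bigr]$ followed by splitting the expectation of the product. No gaps.
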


\begin{proof}
 Again, we immitate the proof given in \cite{deJo90}. Using first the conditional version of the Cauchy-Schwarz inequality and then twice the independence of the underlying random variables $X_1,\dotsc,X_n$ we obtain 
 \begin{align*}
 \E\Bigl[\Bigl(\E\bigl[W_JW_K\,\bigl|\,\F_{J\Delta K}\bigr]\Bigr)^2\Bigr]&\leq\E\Bigl[ \E\bigl[W_J^2\,\bigl|\,\F_{J\Delta K}\bigr]\E\bigl[W_K^2\,\bigl|\,\F_{J\Delta K}\bigr]\Bigr]\\
 &=\E\Bigl[ \E\bigl[W_J^2\,\bigl|\,\F_{J\setminus K}\bigr]\E\bigl[W_K^2\,\bigl|\,\F_{K\setminus J}\bigr]\Bigr]\\
 &=\E\Bigl[ \E\bigl[W_J^2\,\bigl|\,\F_{J\setminus K}\bigr]\Bigr]\E\Bigl[\E\bigl[W_K^2\,\bigl|\,\F_{K\setminus J}\bigr]\Bigr]\\
 &=\sigma_J^2(i)\sigma_K^2(k)\,.
 \end{align*}
\end{proof}

\begin{proof}[Proof of Proposition \ref{s0propgen}]
 We generalize the argument used in the proof of Proposition 5 (b) of \cite{deJo90}. For $l=1,\dotsc, p_i-1$ we have 
 \begin{align*}
0&\leq \sum_{\substack{C\subseteq[n]:\\\abs{C}=p_i+p_k-2l}}\sum_{\substack{B,B'\subseteq C:\\\abs{B}=p_i-l,\abs{B'}=p_k-l,\\B\cap B'=\emptyset}}
\E\Bigl[\Bigl(\sum_{\substack{J\in\D_{p_i},M\in\D_{p_k}:\\J\setminus M= B, M\setminus J=B'}}\E\bigl[W_J(i)W_M(k)\,\bigl|\,\F_{J\Delta M}\bigr]\Bigl)^2\Bigr]\\
&=\sum_{\substack{C\subseteq[n]:\\\abs{C}=p_i+p_k-2l}}\sum_{\substack{B,B'\subseteq C:\\\abs{B}=p_i-l,\abs{B'}=p_k-l,\\B\cap B'=\emptyset}}
\sum_{\substack{(J,K,L,M)\in\D_{i,k}^4:\\J\setminus M= L\setminus K=B, M\setminus J=K\setminus L=B'}}\\
&\hspace{2cm}\E\Bigl[\E\bigl[W_J(i)W_M(k)\,\bigl|\,\F_{J\Delta M}\bigr]\E\bigl[W_L(i)W_K(k)\,\bigl|\,\F_{L\Delta K}\bigr]\Bigr]\\
&=\sum_{\substack{(J,K,L,M)\in\B_{i,k}:\\J\setminus M= L\setminus K, M\setminus J=K\setminus L,\\\abs{J\cap M}=l}}\E\bigl[W_J(i)W_K(k)W_L(i)W_M(k)\bigr]\\
&\;+\sum_{\substack{(J,K,L,M)\in\mathcal{T}_{i,k}:\\J\setminus M= L\setminus K, M\setminus J=K\setminus L,\\\abs{J\cap M}=l=\abs{L\cap K}}}
\E\Bigl[\E\bigl[W_J(i)W_M(k)\,\bigl|\,\F_{J\Delta M}\bigr]\E\bigl[W_L(i)W_K(k)\,\bigl|\,\F_{L\Delta K}\bigr]\Bigr]\\
&=:S_0(i,k,l)+R_l\,,
 \end{align*}
 where we have used Lemma \ref{le4dj} to obtain the second equality. Note that 
 \[\sum_{l=1}^{p_i-1}S_0(i,k,l)=S_0(i,k)\]
 because for a bifold quadruple $(J,K,L,M)$ the identity $J\setminus M =L\setminus K$ implies 
 that $J\cap K=L\cap M=\emptyset$ and because we have
 \begin{equation*}
  S_0(i,k)=\sum_{\substack{(J,K,L,M)\in\B_{i,k}:\\ J\cap K=L\cap M=\emptyset,\\\emptyset\subsetneq J\cap M\subsetneq J}}\E\bigl[W_J(i)W_K(k)W_L(i)W_M(k)\bigr]\,.
 \end{equation*}

Further, by the Cauchy-Schwarz inequality and by Lemma \ref{le3dj}, we have
 \begin{align*}
  \Babs{\sum_{l=1}^{p_i-1}R_l}&\leq \sum_{l=1}^{p_i-1}\sum_{\substack{(J,K,L,M)\in\mathcal{T}_{i,k}:\\J\setminus M= L\setminus K, M\setminus J=K\setminus L,\\\abs{J\cap M}=l=\abs{L\cap K}}}
  \biggl(\E\Bigl[\Bigl(\E\bigl[W_J(i)W_M(k)\,\bigl|\,\F_{J\Delta M}\bigr]\Bigr)^2\Bigr]\biggl)^{1/2}\\
&\hspace{3cm}  \biggl(\E\Bigl[\Bigl(\E\bigl[W_L(i)W_K(k)\,\bigl|\,\F_{L\Delta K}\bigr]\Bigr)^2\Bigr]\biggl)^{1/2}\\
&\leq \sum_{(J,K,L,M)\in\mathcal{T}_{i,k}}\sigma_J(i)\sigma_K(k)\sigma_L(i)\sigma_M(k)=\tau_{i,k}\,.
 \end{align*}
Thus, the claim follows.
\end{proof}

\noindent\textit{Proof of Proposition \ref{taubound}.} 
In order to prove Proposition \ref{taubound} let us review the following concepts and notation, introduced in \cite{deJo89}. For a quadruple $(J_1,J_2,J_3,J_4)\in\D_d^4$ write 
\begin{equation*}
 I:=J_1\cup J_2\cup J_3\cup J_4=\{i_1,\dotsc,i_r\}\quad\text{with}\quad 1\leq i_1<i_2<\dotsc <i_r\leq n
 \end{equation*}
and define the \textit{shadow} $(J_1',J_2',J_3',J_4')$ of $(J_1,J_2,J_3,J_4)$ by 
\begin{equation*}
 J_l':=\bigl\{a\in\{1,\dotsc,r\}\,:\, i_a\in J_l\bigr\}\,,\quad 1\leq l\leq4\,.
\end{equation*}
Note that since we have the equivalence
\begin{equation*}
 a\in J_l'\Leftrightarrow i_a\in J_l
\end{equation*}
the sets $J_l'$ satisfy obvious relations like 
\begin{equation}\label{shadint}
\abs{J_l'}=\abs{J_l}=d\,,\quad\abs{J_l'\cap J_m'}=\abs{J_l\cap J_m}\quad\text{etc.} 
\end{equation}
and that a quadruple $(J_1,J_2,J_3,J_4)\in\D_d^4$ is completely determined by its shadow and by $I=\bigcup_{l=1}^4 J_l$.
Note also that if $(J_1,J_2,J_3,J_4)\in\mathcal{T}_d$, then we have 
\begin{equation*}
 J_1'\cup J_2'\cup J_3'\cup J_4'=\{1,\dotsc,r\}
\end{equation*}
for some $r\in\{d,d+1,\dotsc,2d-1\}$ and $J_i\cap J_k\not=\emptyset$ for all $i,k=1,2,3,4$. Indeed, if, for instance, $J_1\cap J_2$ were empty and $j_0\in J_i$ for at least three values of $i\in\{1,2,3,4\}$, then necessarily
$j_0\in J_3\cap J_4$ implying $\abs{J_3\cup J_4}\leq 2d-1$. Hence, $J_1\cup J_2\not\subseteq J_3\cup J_4$ because $\abs{J_1\cup J_2}=2d$ by disjointness. Thus, $(J_1,J_2,J_3,J_4)$ has a free index and, hence, cannot be in $\mathcal{T}_d$.
By the above observation \eqref{shadint}, this immediateley implies that also $J_i'\cap J_k'\not=\emptyset$ for all $i,k=1,2,3,4$.

In general, we call a quadruple of sets $\mathbb{F}=(F_1,F_2,F_3,F_4)$ a \textit{shadow} (a \textit{$d$-shadow}) if there is an $r\in\{d,d+1,\dotsc,2d-1\}$ such that 
$F:=F_1\cup F_2\cup F_3\cup F_4=\{1,\dotsc,r\}$ and $\abs{F_l}=d$ for $l=1,2,3,4$. We call $r$ the \textit{size} of the shadow $\mathbb{F}$. We say that the shadow $\mathbb{F}$ is \textit{induced} by the quadruple $(J_1,J_2,J_3,J_4)\in\D_d^4$, if 
$\mathbb{F}=(J_1',J_2',J_3',J_4')$. We write $\mathbb{F}(J_1,J_2,J_3,J_4)$ for the shadow induced by $(J_1,J_2,J_3,J_4)$.
If $\mathbb{F}'=(F_1',F_2',F_3',F_4')$ is another $d$-shadow with $F':=F_1'\cup F_2'\cup F_3'\cup F_4'=\{1,\dotsc,r'\}$, then we say that $\mathbb{F}$ and $\mathbb{F}'$ are \textit{equivalent} and write $\mathbb{F}\sim\mathbb{F}'$, if $r=r'$ and there is a permutation $\sigma\in\mathbb{S}_r$ such that 
\begin{equation}\label{eqshad}
F_l'=\sigma(F_l)\quad\text{for }l=1,2,3,4.
\end{equation}
We denote the latter fact by $\mathbb{F}'=\mathbb{F}_\sigma$.
This clearly defines an equivalence relation on the set of $d$-shadows and we denote by $[\mathbb{F}]_{\sim}$ the equivalence class of $\mathbb{F}$. We further denote by $\gamma(\mathbb{F})$ the number of permutations 
$\sigma\in\mathbb{S}_r$ that leave $\mathbb{F}$ fixed in the sense that 
\begin{equation}\label{ershadow}
\sigma(F_l)=F_l\quad\text{for all }l=1,2,3,4.
\end{equation}
The set of these permutaions is just the \textit{stabilizer} of $\mathbb{F}$ with respect to the natural action of $\mathbb{S}_r$ on the set of $d$-shadows of size $r$. Note that, for $\mathbb{F}'\sim \mathbb{F}$, we have $\gamma(\mathbb{F})=\gamma(\mathbb{F}')$ and that 
$\gamma(\mathbb{F})$ also gives the number of permutations $\sigma$ such that \eqref{eqshad} holds.
Let us define the function $g:[n]^d\rightarrow\R$ by 
\begin{equation*}
 g(j_1,\dotsc,j_d):=\begin{cases}
                     \sigma_{\{j_1,\dotsc,j_d\}}\,,&\text{if }\babs{\{j_1,\dotsc,j_d\}}=d\\
                     0\,,&\text{otherwise.}
                    \end{cases}
\end{equation*}
Then, $g$ is a symmetric function vanishing on the complement $\Delta^c=[n]^d\setminus\Delta$ of 
\begin{equation*}
 \Delta:=\Delta_{d}^{(n)}:=\{(j_1,\dotsc,j_d)\in[n]^d\,:\,j_l\not=j_m\text{ whenever }l\not=m\}\,.
\end{equation*}
Further, for a shadow $\mathbb{F}=(F_1,F_2,F_3,F_4)$ which is induced by some quadruple $(J_1,J_2,J_3,J_4)\in\mathcal{D}_d^4$ and with $F:=F_1\cup F_2\cup F_3\cup F_4=\{1,\dotsc,r\}$ and $\pi_{F_l}$ being the natural
projection $[n]^F\rightarrow [n]^{F_l}$ given by $(j_a)_{a\in F}\mapsto(j_a)_{a\in F_l}$, define $G_{F_l}:[n]^F\rightarrow\R$ by $G_{F_l}:=g\circ\pi_{F_l}$. Here, we tacitly identify $[n]^d$ with $[n]^{F_l}$ and $[n]^r$ with $[n]^F$. 
\begin{lemma}\label{shadow}
 Let $\mathbb{F}=(F_1,F_2,F_3,F_4)$ be a $d$-shadow of size $r$ which is induced by some quadruple $(J_1,J_2,J_3,J_4)\in\mathcal{T}$. Then, we have the bound
 \begin{align*}
  \sum_{\substack{(J,K,L,M)\in\mathcal{T}:\\\mathbb{F}(J,K,L,M)\in[\mathbb{F}]_\sim}}\sigma_J\sigma_K\sigma_L\sigma_M&\leq\frac{d!(d-1)!}{\gamma(\mathbb{F})} \rho_n^2\,.
\end{align*}
 \end{lemma}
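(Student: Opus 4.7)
The proof strategy is to pass from the sum over quadruples $(J,K,L,M)$ to a sum over ordered tuples $(j_1,\dots,j_r) \in [n]^r$, and then apply Cauchy-Schwarz combined with two elementary estimates on the symmetric function $g$ defined just before the lemma statement. Specifically, since $g(j_1,\dots,j_d) = \sigma_{\{j_1,\dots,j_d\}}$ on $\Delta$ and vanishes elsewhere, one has $\sum_{j \in [n]^d} g(j)^2 = d!\sum_{K\in\mathcal{D}_d}\sigma_K^2 = d!$ and, for any fixed $i \in [n]$, $\sum_{j_2,\dots,j_d \in [n]} g(i, j_2, \dots, j_d)^2 = (d-1)!\sum_{K\ni i}\sigma_K^2 \leq (d-1)!\,\rho_n^2$ by the definition of $\rho_n^2$.

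For the parametrization, I fix a representative $\mathbb{F} = (F_1,F_2,F_3,F_4)$ of $[\mathbb{F}]_\sim$ with $F_1\cup\dotsb\cup F_4=[r]$, and consider the map $(j_1,\dots,j_r)\in\Delta_r^{(n)}\mapsto (\{j_a:a\in F_l\})_{l=1}^4$. Each quadruple whose induced shadow lies in $[\mathbb{F}]_\sim$ arises from exactly $\gamma(\mathbb{F})$ tuples (the stabilizer of $\mathbb{F}$ under $\mathbb{S}_r$). Since $g$ vanishes off $\Delta$, one may extend the inner sum from $\Delta_r^{(n)}$ to all of $[n]^r$, which gives
\begin{equation*}
\sum_{\substack{(J,K,L,M)\in\mathcal{T}\\ \mathbb{F}(J,K,L,M)\in[\mathbb{F}]_\sim}}\sigma_J\sigma_K\sigma_L\sigma_M \;=\; \frac{1}{\gamma(\mathbb{F})}\sum_{j\in [n]^r}G_{F_1}(j)G_{F_2}(j)G_{F_3}(j)G_{F_4}(j)\,.
\end{equation*}
It thus suffices to show that $\sum_{j\in[n]^r}\prod_{l=1}^4 G_{F_l}(j) \leq d!\,(d-1)!\,\rho_n^2$.

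To accomplish this, I select a pairing $\{\{i,j\},\{k,l\}\}$ of $\{1,2,3,4\}$ with the property that $F_i\cup F_j = F_k\cup F_l = [r]$; note that this forces $F_i\cap F_j\neq\emptyset$ and $F_k\cap F_l\neq\emptyset$, since for a $\mathcal{T}$-shadow one has $r\leq 2d-1$. Cauchy-Schwarz then yields
\begin{equation*}
\sum_{j\in[n]^r} \prod_{l=1}^4 G_{F_l}(j) \;\leq\; \Bigl(\sum_j G_{F_i}^2 G_{F_j}^2\Bigr)^{1/2}\Bigl(\sum_j G_{F_k}^2 G_{F_l}^2\Bigr)^{1/2}.
\end{equation*}
Factorizing $\sum_j G_{F_i}^2 G_{F_j}^2$ according to the partition $[r] = (F_i\cap F_j) \cup (F_i\setminus F_j) \cup (F_j\setminus F_i)$ and bounding the inner $(F_i\setminus F_j)$-sum uniformly in $j|_{F_i\cap F_j}$ by $(d-1)!\rho_n^2$ (via the second key estimate applied at any fixed $a\in F_i\cap F_j$) produces $\sum_j G_{F_i}^2 G_{F_j}^2 \leq (d-1)!\rho_n^2 \cdot \sum_{j|_{F_j}} G_{F_j}^2 = d!(d-1)!\rho_n^2$. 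The analogous bound holds for the second factor, giving the desired inequality.

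The principal difficulty is that such a ``doubly covering'' pairing need not exist for every $\mathcal{T}$-shadow: it breaks down precisely when the multiplicity-$2$ indices of $\mathbb{F}$ have types (the unordered pairs of positions where they appear) that collectively hit all three natural pairings of $\{1,2,3,4\}$. In those cases, one must instead apply Cauchy-Schwarz iteratively on individual variables, exploiting the index of multiplicity $\geq 3$ guaranteed by the $\mathcal{T}$-condition to extract the factor of $\rho_n^2$. The resulting combinatorial case analysis on the intersection pattern of $(F_1,F_2,F_3,F_4)$ is the most technical part of the argument, and it is ultimately responsible for the dimension-dependent combinatorial constants $C_d$ appearing in Proposition \ref{taubound} and $C_{i,k}$ in Proposition \ref{gentaubound}.
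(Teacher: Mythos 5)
Your reduction to bounding $\sum_{(i_1,\dotsc,i_r)\in[n]^r}\prod_{l=1}^4G_{F_l}$ by $d!(d-1)!\rho_n^2$ is exactly the paper's first step, including the counting by the stabilizer order $\gamma(\mathbb{F})$, and your two elementary estimates ($\sum g^2=d!$ over $[n]^d$, and the bound $(d-1)!\rho_n^2$ with one coordinate fixed) are the ones the paper uses. One small slip: your displayed identity should be an inequality ``$\leq$''. Equality does hold between the quadruple sum and the sum over tuples with \emph{pairwise distinct} coordinates, but the extension to all of $[n]^r$ is not free: $g$ kills only collisions \emph{within a single block} $F_l$, and two indices that never co-occur in any block (e.g.\ $F_1=F_2=\{1,2,3\}$, $F_3=F_4=\{1,4,5\}$, indices $2$ and $4$) can carry equal values while the product stays nonzero. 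Since all terms are nonnegative, the direction you need survives, so this is cosmetic. Your treatment of the case where a pairing $\{\{i,j\},\{k,l\}\}$ with $F_i\cup F_j=F_k\cup F_l=[r]$ exists is correct, tidier than the paper's corresponding manipulations, and delivers exactly the required constant $d!(d-1)!$.

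The genuine gap is that the complementary case is where essentially all of the work in the paper's proof lies, and you do not carry it out: ``apply Cauchy--Schwarz iteratively on individual variables, exploiting the index of multiplicity $\geq3$'' is a plan, not an argument. The case is not vacuous: for $d=3$, $r=5$, take $F_1=\{1,2,3\}$, $F_2=\{1,4,5\}$, $F_3=\{2,4,5\}$, $F_4=\{3,4,5\}$; this is a $\mathcal{T}$-shadow (elements $4$ and $5$ have multiplicity $3$), and all three pairings fail, since the multiplicity-two elements $1,2,3$ have types $\{1,2\}$, $\{1,3\}$, $\{1,4\}$. What the paper actually does in this regime is nontrivial: after disposing of the configurations with coinciding blocks (which already needs that all pairwise intersections $F_l\cap F_m$ are nonempty for $\mathcal{T}$-shadows, itself proved via the multiplicity-$3$ index), it pulls out one factor by Cauchy--Schwarz using $\bigl(\sum_{\mathbf{j}}g(\mathbf{j})^2\bigr)^{1/2}=\sqrt{d!}$, reducing matters to $A:=\sum_{\mathbf{j}}\bigl(\sum_{\mathbf{k}}G_{F_2}G_{F_3}G_{F_4}\bigr)^2\leq d!\bigl((d-1)!\bigr)^2\rho_n^4$, and then splits according to whether every element has multiplicity $\geq3$ (using $F\setminus F_k=F_j\setminus F_k$ there) or some element has multiplicity exactly $2$, in which case it must first establish the structural fact that $F_1\not\subseteq F_j\cup F_k$ for some distinct $j,k\in\{2,3,4\}$ --- via the identity $(F_2\cup F_3)\cap(F_2\cup F_4)\cap(F_3\cup F_4)=(F_2\cap F_3)\cup(F_2\cap F_4)\cup(F_3\cap F_4)$ --- and then run a delicate chain of estimates exploiting $(F_3\cup F_4)\setminus(F_1\cup F_2)=(F_3\cap F_4)\setminus(F_1\cup F_2)$ together with a fixed-slice substitution. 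Nothing in your sketch indicates how a variable-by-variable Cauchy--Schwarz would produce the factor $\rho_n^2$ without losing powers of $n$, so the lemma is not proved as it stands. (A side remark: your attribution is also slightly off --- the constant $C_d$ of Proposition \ref{taubound} does not come from this case analysis, since the per-class bound is uniformly $d!(d-1)!\rho_n^2/\gamma(\mathbb{F})$; it comes from summing over the finitely many equivalence classes of shadows, $C_d=d!(d-1)!\sum_{j}\gamma(\mathbb{F}_j)^{-1}$.)
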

 
 \begin{proof}
 For ease of notation, in this proof we use bold letters $\mathbf{a}$ to denote tuples $\mathbf{a}=(a_1,\dotsc,a_s)\in[n]^s$, where $s$ is some natural number. Also, for two such tuples $\mathbf{a}=(a_1,\dotsc,a_s)\in[n]^s$ and 
 $\mathbf{b}=(b_1,\dotsc,b_t)\in[n]^t$ we write $\mathbf{a}\cap \mathbf{b}\not=\emptyset$ if there are indices $1\leq i\leq s$ and $1\leq j\leq t$ such that $a_i=b_j$, i.e. if 
 \begin{equation*}
  \{a_1,\dotsc,a_s\}\cap\{b_1,\dotsc,b_t\}\not=\emptyset\,.
 \end{equation*}

  We begin the proof with the remark that 
\begin{align*}
&\quad\sum_{(i_1,\dotsc,i_r)\in[n]^r}G_{F_1}(i_1,\dotsc,i_r)G_{F_2}(i_1,\dotsc,i_r)G_{F_3}(i_1,\dotsc,i_r)G_{F_4}(i_1,\dotsc,i_r)\\
&\geq\gamma(\mathbb{F})\sum_{\substack{(J,K,L,M)\in\mathcal{T}:\\\mathbb{F}(J,K,L,M)\in[\mathbb{F}]_\sim}}\sigma_J\sigma_K\sigma_L\sigma_M\,.
\end{align*}
This follows from 
\begin{align*}
 \sum_{(i_1,\dotsc,i_r)\in[n]^r}\prod_{l=1}^4G_{F_l}(i_1,\dotsc,i_r)
 &\geq\sum_{(i_1,\dotsc,i_r)\in[n]^r_{\not=}}\prod_{l=1}^4G_{F_l}(i_1,\dotsc,i_r)\\
 &=\sum_{1\leq j_1<\dotsc<j_r\leq n}\sum_{\sigma\in\mathbb{S}_r}\prod_{l=1}^4g\bigl(\pi_{F_l}(j_{\sigma(1)},\dotsc,j_{\sigma(r)})\bigr)\\
 &=\sum_{1\leq j_1<\dotsc<j_r\leq n}\sum_{\sigma\in\mathbb{S}_r}\prod_{l=1}^4g\bigl(\pi_{\sigma(F_l)}(j_1,\dotsc,j_r)\bigr)\\
 &=\gamma(\mathbb{F})\sum_{\mathbb{F}'\in[\mathbb{F}]_\sim}\sum_{1\leq j_1<\dotsc<j_r\leq n}\prod_{l=1}^4G_{F_l'}(j_1,\dotsc,j_r)\\
 &=\gamma(\mathbb{F})\sum_{\mathbb{F}'\in[\mathbb{F}]_\sim}\sum_{\substack{(J,K,L,M)\in\mathcal{T}:\\\mathbb{F}(J,K,L,M)=\mathbb{F}'}}\sigma_J\sigma_K\sigma_L\sigma_M\\
 &=\gamma(\mathbb{F})\sum_{\substack{(J,K,L,M)\in\mathcal{T}:\\\mathbb{F}(J,K,L,M)\in[\mathbb{F}]_\sim}}\sigma_J\sigma_K\sigma_L\sigma_M\,.
\end{align*}
Here, we used the notation $[n]^r_{\not=}$ for the set of all tuples $(i_1,\dotsc,i_r)\in[n]^r$ such that $i_j\not=i_k$ whenever $j\not=k$.
Hence, it suffices to show that we always have the bound
\begin{align*}
 \sum_{(i_1,\dotsc,i_r)\in[n]^r}\prod_{l=1}^4G_{F_l}(i_1,\dotsc,i_r)\leq d!(d-1)! \rho_n^2
\end{align*}
if $\mathbb{F}$ is as in the statement of the Lemma.\\

 We first treat the simple cases that either two or all of the sets $F_l$, $l=1,2,3,4$, are equal. Note that the case of exactly three equal sets is vacuous for a quadruple in $\mathcal{T}$.
  Assume first that e.g. $F_3\not=F_1=F_2\not=F_4$. It might be that also $F_3=F_4$ but this is immaterial. Then, we have 
  \begin{align*}
   &\quad\sum_{(i_1,\dotsc,i_r)\in[n]^r}G_{F_1}(i_1,\dotsc,i_r)G_{F_2}(i_1,\dotsc,i_r)G_{F_3}(i_1,\dotsc,i_r)G_{F_4}(i_1,\dotsc,i_r)\\
   &=\sum_{(i_1,\dotsc,i_r)\in[n]^F}G_{F_1}^2(i_1,\dotsc,i_r)G_{F_3}(i_1,\dotsc,i_r)G_{F_4}(i_1,\dotsc,i_r)\\
  &=\sum_{\mathbf{j}\in[n]^{F_1}}g(\mathbf{j})^2\sum_{\mathbf{k}\in[n]^{F\setminus F_1}}G_{F_3}(\mathbf{j},\mathbf{k})G_{F_4}(\mathbf{j},\mathbf{k})\\
 &\leq \sum_{\mathbf{j}\in[n]^{F_1}}g(\mathbf{j})^2 \biggl(\sum_{\mathbf{k}\in[n]^{F\setminus F_1}}G_{F_3}^2(\mathbf{j},\mathbf{k})\biggr)^{1/2}
 \biggl(\sum_{\mathbf{k}\in[n]^{F\setminus F_1}}G_{F_4}^2(\mathbf{j},\mathbf{k})\biggr)^{1/2}\\
 &=\sum_{\mathbf{j}\in[n]^{F_1}}g(\mathbf{j})^2\biggl(\sum_{\substack{\mathbf{l}\in[n]^{F_3}: \mathbf{l}\cap\mathbf{j}\not=\emptyset}}g(\mathbf{l})^2\biggr)^{1/2}
 \biggl(\sum_{\substack{\mathbf{m}\in[n]^{F_4}: \mathbf{m}\cap\mathbf{j}\not=\emptyset}}g(\mathbf{m})^2\biggr)^{1/2}\\
 &\leq (d-1)!\rho_n^2 \sum_{\mathbf{j}\in[n]^{F_1}}g(\mathbf{j})^2\\
 &=d!(d-1)!\rho_n^2 
  \end{align*}
Note that the second inequality follows from the fact that $F_1\cap F_3\not=\emptyset$ and $F_1\cap F_4\not=\emptyset$ in this case as well as by the definition of $\rho_n^2$.
 If $F_1=F_2=F_3=F_4$, then we have $r=d$ and
 \begin{align*}
  &\quad\sum_{(i_1,\dotsc,i_r)\in[n]^r}G_{F_1}(i_1,\dotsc,i_r)G_{F_2}(i_1,\dotsc,i_r)G_{F_3}(i_1,\dotsc,i_r)G_{F_4}(i_1,\dotsc,i_r)\\
  &=\quad\sum_{(j_1,\dotsc,j_d)\in[n]^d}g(j_1,\dotsc,j_d)^4\\
  &\leq \sum_{j_1=1}^n \max_{(j_2,\dotsc,j_d)\in[n]^{d-1}}g(j_1,\dotsc,j_d)^2\sum_{(k_2,\dotsc,k_d)\in[n]^{d-1}}g(j_1,k_2,\dotsc,k_d)^2\\
  &\leq (d-1)!\rho_n^2\sum_{j_1=1}^n \max_{(j_2,\dotsc,j_d)\in[n]^{d-1}}g(j_1,\dotsc,j_d)^2\\
  &\leq (d-1)!\rho_n^2 \sum_{(j_1,\dotsc,j_d)\in[n]^d}g(j_1,\dotsc,j_d)^2\\
  &= d!(d-1)!\rho_n^2\,.
 \end{align*}
For the remainder of this proof we may thus assume that the sets $F_l$, $l=1,2,3,4$, are pairwise different. Then, using the Cauchy-Schwarz inequality, we can bound 
\begin{align}\label{sh1}
 &\quad\sum_{(i_1,\dotsc,i_r)\in[n]^r}G_{F_1}(i_1,\dotsc,i_r)G_{F_2}(i_1,\dotsc,i_r)G_{F_3}(i_1,\dotsc,i_r)G_{F_4}(i_1,\dotsc,i_r)\notag\\
 &=\sum_{\mathbf{j}\in[n]^{F_1}}g(\mathbf{j})\sum_{\mathbf{k}\in[n]^{F\setminus F_1}}G_{F_2}(\mathbf{j},\mathbf{k})G_{F_3}(\mathbf{j},\mathbf{k})G_{F_4}(\mathbf{j},\mathbf{k})\notag\\
 &\leq \biggl(\sum_{\mathbf{j}\in[n]^{F_1}}g(\mathbf{j})^2\biggr)^{1/2} 
 \biggl(\sum_{\mathbf{j}\in[n]^{F_1}}\Bigl(\sum_{\mathbf{k}\in[n]^{F\setminus F_1}}G_{F_2}(\mathbf{j},\mathbf{k})G_{F_3}(\mathbf{j},\mathbf{k})G_{F_4}(\mathbf{j},\mathbf{k})\Bigr)^2\biggr)^{1/2}\notag\\
 &=\sqrt{d!}\biggl(\sum_{\mathbf{j}\in[n]^{F_1}}\Bigl(\sum_{\mathbf{k}\in[n]^{F\setminus F_1}}G_{F_2}(\mathbf{j},\mathbf{k})G_{F_3}(\mathbf{j},\mathbf{k})G_{F_4}(\mathbf{j},\mathbf{k})\Bigr)^2\biggr)^{1/2}
\end{align}
Thus, it remains to bound the quantity 
\begin{equation*}
 A:=\sum_{\mathbf{j}\in[n]^{F_1}}\Bigl(\sum_{\mathbf{k}\in[n]^{F\setminus F_1}}G_{F_2}(\mathbf{j},\mathbf{k})G_{F_3}(\mathbf{j},\mathbf{k})G_{F_4}(\mathbf{j},\mathbf{k})\Bigr)^2\,.
\end{equation*}
Let us distinguish the following cases. 
\begin{enumerate}[1)]
 \item Each element in $F=F_1\cup F_2\cup F_3\cup F_4$ appears in at least three of the sets $F_1,F_2,F_3,F_4$. This implies that 
 \begin{equation*}
  F\setminus F_k= F_j\setminus F_k\quad\text{for all distinct }j,k\in\{2,3,4\}\,.
 \end{equation*}
Then, using Cauchy-Schwarz, we can bound
\begin{align*}
 A&\leq \sum_{\mathbf{j}\in[n]^{F_1}}\biggl(\sum_{\mathbf{k}\in[n]^{F\setminus F_1}}G_{F_2}^2(\mathbf{j},\mathbf{k})
 \sum_{\mathbf{k}\in[n]^{F\setminus F_1}}G_{F_3}^2(\mathbf{j},\mathbf{k})G_{F_4}^2(\mathbf{j},\mathbf{k})\biggr)\\
 &= \sum_{\mathbf{j}\in[n]^{F_1}}\biggl(\sum_{\substack{\mathbf{l}\in[n]^{F_2}: \mathbf{l}\cap\mathbf{j}\not=\emptyset}}g(\mathbf{l})^2
 \sum_{\mathbf{k}\in[n]^{F\setminus F_1}}G_{F_3}^2(\mathbf{j},\mathbf{k})G_{F_4}^2(\mathbf{j},\mathbf{k})\biggr)\\
 &\leq (d-1)!\rho_n^2\sum_{\mathbf{j}\in[n]^{F_1}}\sum_{\mathbf{k}\in[n]^{F\setminus F_1}}G_{F_3}^2(\mathbf{j},\mathbf{k})G_{F_4}^2(\mathbf{j},\mathbf{k})\\
 &=(d-1)!\rho_n^2\sum_{\mathbf{k}\in[n]^{F\setminus F_1}}\sum_{\mathbf{l}\in[n]^{F_1\cap F_3\cap F_4}}\sum_{\mathbf{a}\in[n]^{(F_3\cap F_1)\setminus F_4}}g(\mathbf{k},\mathbf{l},\mathbf{a})^2
 \sum_{\mathbf{b}\in[n]^{(F_4\cap F_1)\setminus F_3}}g(\mathbf{k},\mathbf{l},\mathbf{b})^2\\
 &\leq \bigl((d-1)!\bigr)^2 \rho_n^4\sum_{\mathbf{k}\in[n]^{F\setminus F_1}}\sum_{\mathbf{l}\in[n]^{F_1\cap F_3\cap F_4}}\sum_{\mathbf{a}\in[n]^{(F_3\cap F_1)\setminus F_4}}g(\mathbf{k},\mathbf{l},\mathbf{a})^2\\
 &= \bigl((d-1)!\bigr)^2 \rho_n^4\sum_{\mathbf{j}\in[n]^{F_3}}g(\mathbf{j})^2\\
 &=d!\bigl((d-1)!\bigr)^2 \rho_n^4\,.
\end{align*}
Note that we have used the fact that 
\begin{equation*}
 (F_4\cap F_1)\setminus F_3=F\setminus F_3\not=\emptyset
\end{equation*}
to obtain the last inequality. 
\item There is an element $j_0\in F=F_1\cup F_2\cup F_3\cup F_4$ which is contained in exactly two of the sets $F_1,F_2,F_3,F_4$. We may assume that $j_0\in F_1$. We claim that then there are distinct indices 
$j,k\in\{2,3,4\}$ such that 
\begin{equation*}
 F_1\not\subseteq F_j\cup F_k\,.
\end{equation*}
Indeed, we have 
\begin{equation*}
 (F_2\cup F_3)\cap(F_2\cup F_4)\cap(F_3\cup F_4)=(F_2\cap F_3)\cup(F_2\cap F_4)\cup(F_3\cap F_4)
\end{equation*}
and, hence, $j_0$ cannot be contained in the set on the right hand side. Thus, let us assume that $F_1\not\subseteq F_3\cup F_4$. We obtain that 
\begin{align*}
A&\leq\sum_{\mathbf{j}\in[n]^{F_1\setminus F_2}}\sum_{\mathbf{a}\in[n]^{F_1\cap F_2}}
\biggl(\sum_{\mathbf{k}\in[n]^{F_2\setminus F_1}}g(\mathbf{a},\mathbf{k})
\sum_{\mathbf{l}\in[n]^{(F_3\cup F_4)\setminus(F_1\cup F_2)}}G_{F_3}(\mathbf{j},\mathbf{a},\mathbf{k},\mathbf{l})
G_{F_4}(\mathbf{j},\mathbf{a},\mathbf{k},\mathbf{l})\biggr)^2\\
&\leq \sum_{\mathbf{j}\in[n]^{F_1\setminus F_2}}\sum_{\mathbf{a}\in[n]^{F_1\cap F_2}}\Bigl(\sum_{\mathbf{k}\in[n]^{F_2\setminus F_1}}g(\mathbf{a},\mathbf{k})^2\Bigr)\\
&\quad\cdot\sum_{\mathbf{k}\in[n]^{F_2\setminus F_1}}\biggl(\sum_{\mathbf{l}\in[n]^{(F_3\cup F_4)\setminus(F_1\cup F_2)}}G_{F_3}(\mathbf{j},\mathbf{a},\mathbf{k},\mathbf{l})
G_{F_4}(\mathbf{j},\mathbf{a},\mathbf{k},\mathbf{l})\biggr)^2\\
&=\sum_{\mathbf{j}\in[n]^{F_1\setminus F_2}}\sum_{\mathbf{a_1}\in[n]^{F_1\cap F_2\cap(F_3\cup F_4)}}
\biggl(\sum_{\mathbf{a_2}\in[n]^{F_1\cap F_2\setminus(F_3\cup F_4)}}\sum_{\mathbf{k}\in[n]^{F_2\setminus F_1}}
g(\mathbf{a_1},\mathbf{a_2},\mathbf{k})^2\biggr)\\
&\hspace{3cm}\cdot\sum_{\mathbf{k}\in[n]^{F_2\setminus F_1}}\biggl(\sum_{\mathbf{l}\in[n]^{(F_3\cup F_4)\setminus(F_1\cup F_2)}}G_{F_3}(\mathbf{j},\mathbf{a_1},\mathbf{a_2^*},\mathbf{k},\mathbf{l})
G_{F_4}(\mathbf{j},\mathbf{a_1},\mathbf{a_2^*},\mathbf{k},\mathbf{l})\biggr)^2\\
&\leq (d-1)!\rho_n^2\sum_{\mathbf{j}\in[n]^{F_1\setminus F_2}}\sum_{\mathbf{a_1}\in[n]^{F_1\cap F_2\cap(F_3\cup F_4)}}
\sum_{\mathbf{k}\in[n]^{F_2\setminus F_1}}\\
&\hspace{3cm}\cdot\biggl(\sum_{\mathbf{l}\in[n]^{(F_3\cup F_4)\setminus(F_1\cup F_2)}}G_{F_3}(\mathbf{j},\mathbf{a_1},\mathbf{a_2^*},\mathbf{k},\mathbf{l})
G_{F_4}(\mathbf{j},\mathbf{a_1},\mathbf{a_2^*},\mathbf{k},\mathbf{l})\biggr)^2\,,
\end{align*}
where 
\[\mathbf{a_2^*} \in[n]^{F_1\cap F_2\setminus(F_3\cup F_4)}\]
is arbitrary but fixed.
Now note that due to the fact that $\mathbb{F}$ is induced by some quadruple in $\mathcal{T}$ we have
\begin{align*}
F_3\cup F_4&=(F_1\setminus F_2)\cup (F_2\setminus F_1)\cup\bigl[(F_3\cup F_4)\cap F_1\cap F_2\bigr]
\cup\bigl[(F_3\cup F_4)\setminus( F_1\cup F_2)\bigr]\\
&=(F_1\setminus F_2)\cup (F_2\setminus F_1)\cup\bigl[(F_3\cup F_4)\cap F_1\cap F_2\bigr]
\cup\bigl[(F_3\cap F_4)\setminus( F_1\cup F_2)\bigr]\,,
\end{align*}
where the union on the right hand side is disjoint. Thus, the last bound becomes 
\begin{align*}
A&\leq(d-1)!\rho_n^2\sum_{\mathbf{m}\in[n]^{(F_3\cup F_4)\cap(F_1\cup F_2)}}
\sum_{\mathbf{l}\in[n]^{(F_3\cap F_4)\setminus(F_1\cup F_2)}}G_{F_3}^2(\mathbf{m},\mathbf{l})\\
&\hspace{3cm}\cdot\sum_{\mathbf{p}\in[n]^{(F_3\cap F_4)\setminus(F_1\cup F_2)}}G_{F_4}^2(\mathbf{m},\mathbf{p})\\
&=(d-1)!\rho_n^2\sum_{\mathbf{a}\in[n]^{(F_3\cap F_4)\cap(F_1\cup F_2)}}
\sum_{\mathbf{b}\in[n]^{(F_3\setminus F_4)\cap(F_1\cup F_2)}}\sum_{\mathbf{l}\in[n]^{(F_3\cap F_4)\setminus(F_1\cup F_2)}}g(\mathbf{a},\mathbf{b},\mathbf{l})^2\\
&\hspace{3cm}\sum_{\mathbf{c}\in[n]^{(F_4\setminus F_3)\cap(F_1\cup F_2)}}\sum_{\mathbf{p}\in[n]^{(F_3\cap F_4)\setminus(F_1\cup F_2)}}g(\mathbf{a},\mathbf{c},\mathbf{p})^2\\
&\leq \bigl((d-1)!\bigr)^2\rho_n^4\sum_{\mathbf{j}\in[n]^{F_3}}g(\mathbf{j})^2\\
&=d!\bigl((d-1)!\bigr)^2\rho_n^4
\end{align*}
\end{enumerate}
\end{proof}

\begin{proof}[End of the proof of Proposition \eqref{taubound}]
 Let $\mathbb{F}_1,\dotsc,\mathbb{F}_s$ be a complete system of pairwise non-equivalent $d$-shadows which are induced by quadruples $(J,K,L,M)\in\mathcal{T}$. 
 Then, clearly, $s$ is independent of $n$ and by Lemma \ref{shadow} we have 
 \begin{align*}
  \tau&=\sum_{(J,K,L,M)\in\mathcal{T}}\sigma_J\sigma_K\sigma_L\sigma_M=\sum_{j=1}^s  \sum_{\substack{(J,K,L,M)\in\mathcal{T}:\\\mathbb{F}(J,K,L,M)\in[\mathbb{F}_j]_\sim}}\sigma_J\sigma_K\sigma_L\sigma_M\\
  &\leq \Bigl(d!(d-1)!\sum_{j=1}^s\gamma(\mathbb{F}_j)^{-1}\Bigr)\rho_n^2
 \end{align*}
so that we can let 
 \begin{equation}\label{Cd}
  C_d:=d!(d-1)!\sum_{j=1}^s\gamma(\mathbb{F}_j)^{-1}
 \end{equation}
which is independent of $n$.\\
\end{proof}

\begin{remark}\label{taurem}
Using the fact that the equivalence class of a shadow \\$\mathbb{F}=(F_1,F_2,F_3,F_4)$ is determined by the cardinalities of all finite intersections of the sets $F_1,F_2,F_3,F_4$, one 
can get an upper bound on the number $s$ of all equivalence classes of shadows induced  by quadruples in $\mathcal{T}$. Using that $\gamma(\mathbb{F})\geq1$ immediately gives a crude bound on $C_d$.
It is not difficult to verify that $C_2=13$ by distinguishing all possible cases. Furthermore, by some clever combinatorial argument, it might be possible to compute sharp bounds on $C_d$ starting from \eqref{Cd}. This would be of great interest for deriving limit theorems in situations where $d=d_n\to\infty$ with $n$. We leave this as an interesting problem for possible future work.
\end{remark}

\begin{proof}[Idea of the proof of Proposition \ref{gentaubound}]
The proof of Proposition \ref{taubound} can be easily generalized to the present situation by introducing the concept of a \textit{$(p_i,p_k)$-shadow} corresponding to a quadruple $(J_1,J_2,J_3,J_4)\in\D_{i,k}^4$ 
and following exactly the same lines of the proof. We have, however, refrained from giving the proof in this more general situation for mainly two reasons. Firstly, the proof of Proposition \ref{taubound} already involves 
a lot of notation and introducing even more of it might make the argument less transparent. Secondly, and more importantly, the precise dependence of the constant $C_{i,k}$ on $p_i$ and $p_k$ would be more complicated and less explicit than 
the formula given by \eqref{Cd} which can be exactly evaluated for small values of $d$ and, as mentioned in Remark \ref{taurem}, might be suitably bounded for general $d$. \\
\end{proof}

\normalem
\bibliography{dejong}{}
\bibliographystyle{alpha}
\end{document}